
\documentclass[reqno]{amsart}
\usepackage{amsfonts}
\usepackage{amsmath}
\usepackage{graphics}
\usepackage{ dsfont}
\newcommand{\nr}{{|\!|}}
\newcommand{\st}{\int_0^t}
\newcommand{\s}{\int_{\Omega}}

\newcommand{\0}{{2}}
\newcommand{\1}{{H^1(\Omega)}}

\newcommand{\car}{{\mathds 1_{\{b<0\}}}}
\newtheorem{thm}{Theorem}[section]
\newtheorem{prop}[thm]{Proposition}
\newtheorem{lem}[thm]{Lemma}

\newtheorem{remark}[thm]{Remark}


\newcommand{\rem}[1]{}

\begin{document}

\title[3D Brinkman-Forchheimer-extended Darcy Model]
{Continuous data assimilation for the three-dimensional Brinkman-Forchheimer-extended Darcy Model
}
\date{February 3, 2015}

\author[P. A. Markowich]{Peter A. Markowich}
\address[P. A. Markowich]
{Division of Math and Computer Sci. and Eng. \\
 King Abdullah University of Science and Technology \\
Thuwal 23955-6900 \\
Saudi Arabia}
\email{peter.markowich@kaust.edu.sa}

\author[E.S. Titi]{Edriss S. Titi}
\address[E.S. Titi]
{Department of Mathematics \\
Texas A\&M University \\
3368-TAMU\\
College Station, TX 77843-3368 \\
USA
}
\email{titi@math.tamu.edu}

\author[S. Trabelsi]{Saber Trabelsi}
\address[S. Trabelsi]
{ Division of Math and Computer Sci. and Eng. \\
 King Abdullah University of Science and Technology \\
Thuwal 23955-6900 \\
Saudi Arabia
}
\email{saber.trabelsi@kaust.edu.sa}

\begin{abstract}
In this paper we introduce and analyze an algorithm for continuous data assimilation for a three-dimensional Brinkman-Forchheimer-extended Darcy (3D BFeD) model of porous media. This model is believed to be accurate when the flow velocity is too large for Darcy's law to be valid, and additionally the porosity is not too small. The algorithm is inspired by ideas developed for designing finite-parameters feedback control for dissipative systems. It aims to obtaining improved estimates of the state of the physical system by incorporating deterministic or noisy measurements and observations. Specifically, the algorithm involves a feedback control that nudges the large scales of the approximate solution toward those of the reference solution associated with the spatial measurements. In the first part of the paper, we present few results of existence and uniqueness of weak and strong solutions of the 3D BFeD system. The second part is devoted to the setting and convergence analysis of the data assimilation algorithm.
\end{abstract}
\maketitle
 {\bf MSC Subject Classifications:} 35Q35, 76B03, 86A10.

{\bf Keywords:}  Brinkman-Forchheimer-extended Darcy model, data assimilation.
\section{Introduction}  \label{S-1}
The mathematical modeling and analysis of nonlinear flows and transport processes through a porous media is a very active field of research in mathematics and physics due to the challenging problems in engineering and applied sciences it covers. Most models of porous media are based on Darcy's law so ``{\it Darcy's equation has become the model of choice for the study of the flow of fluids through porous solids due to the pressure gradients, so much that it has now been elevated to the status of a law in physics}" (see, e.g., \cite{Raja}). Darcy's empirical flow model represents a simple linear relationship between flow rate and the pressure drop in a porous media
\[u_f=- \frac k\mu\, \nabla p, \]
where $u_f$ is the Darcy velocity, $k$ is the permeability of the porous medium, $\mu$ is the dynamic viscosity of the fluid, and $p$  the pressure. Any deviation from this scenario is termed non-Darcy flow. Roughly speaking, this law neglects the inertia or the acceleration forces in the fluid when compared to the classical Navier-Stokes equations. Also, it assumes that in a porous medium, a large body with large surface area of the pores, is exposed to the fluid flow so that the viscous resistance will greatly exceed acceleration forces in the fluid unless turbulence sets in. However, there are several situations where nature deviates from Darcy's law, for instance when one deals with high velocity, molecular and ionic effects or in the presence of some non-Newtonian fluids phenomena. In these situations, it is imperative to develop a more adequate models than the classical models based on Darcy's law. In practice, this can be achieved using Forchheimer equation based on an observation of P. Forchheimer which states that the relationship between the flow rate and pressure gradient is nonlinear at sufficiently high velocity and that this nonlinearity increases with flow rate. The Darcy-Forchheimer law states
\[ \nabla p= -\frac{\mu}{k}\,{\bf v}_f- \gamma \rho_f |{\bf v}_f|^2 \,{\bf v}_f,\]
where $\gamma>0$ is the so-called Forchheimer coefficient and ${\bf v}_f$ stands for the Forchheimer velocity and $\rho_f$ the density. In other words, Forchheimer law assumes that Darcy's law is still valid up to an additional nonlinear term  to account for the increased pressure drop.
\vskip6pt
\noindent The BFeD model is then based on a Darcy-Forchheimer  law. It  was originally derived in its classical configuration ($\alpha=\frac12, \beta=0, a>0$, and $b>0$, see equation \eqref{oursystem} below) in the framework of thermal dispersion in a porous medium using the method of volume averaging of the velocity and temperature deviations in the pores (see e.g. \cite{Hsu}). A discussion of the formulation, validity and limitation of the BFD system can be found in \cite{Vafai,Nield}. In this paper, we consider the following mathematical generalization of the BFeD model
\begin{equation}{\label{oursystem}}
\left\lbrace\begin{array}{ll}
&\partial_t\,u-\nu\,\Delta\,u +(u\cdot\nabla)\,u +\nabla\,p+a\,|u|^{2\alpha}u+b\,|u|^{2\beta}u=f,\\ \\
& \nabla\cdot u=0,,\;u\vert_{t=0}=u_0,
\end{array}
\right.
\end{equation}
subjected either to periodic boundary conditions, with period $L$, and $\Omega=[0,L]^3$ is the basic periodic domain
\begin{equation}\label{pbc} \left\lbrace\begin{array}{lcl}
&u(x+L,y,z,t)=u(x,y+L,z,t)=u(x,y,z+L,t) =u(x,y,z,t),\\ &\\
&p(x+L,y,z,t)=p(x,y+L,z,t)=p(x,y,z+L,t) =p(x,y,z,t),\end{array}\right.
\end{equation}
or Dirichlet no-slip boundary conditions
\begin{equation} \label{dirichlet} u\vert_{\partial \Omega}=0, \end{equation}
where $\partial \Omega$ denotes the boundary of a smooth domain $\Omega$, $\alpha>\beta\geq0$ are constants, and $a$ and $b$ are real numbers. The unknowns are the velocity field, $u$, and the pressure, $p$ and $f$ is a given forcing term. In system \eqref{oursystem}, we introduced the extra term $b\,|u|^{2\beta}u$ to model a pumping, when $b<0$, by opposition to the damping modeled through the term $a\,|u|^{2\alpha}u$ when $a>0$. Notice that in the limit case $a=b=0$, we obtain the classical Navier-Stokes system.  To our knowledge, there are only few mathematical results concerning this model and most of them focus on the case $\frac12\leq \alpha \leq 1$ and $\beta=0$. The continuous dependence on Brinkman and Forchheimer coefficients and the convergence as $\nu \to 0$ of solutions of DBF equation to the solutions of
\[\partial_t\,u+(u\cdot\nabla)\,u +\nabla\,p+a\,|u|^{2\alpha}u+b\,|u|u=f,\]
are studied in \cite{2,3,12,Louaked,18,21} and references therein. The long time behavior of solutions and the existence of global attractor to \eqref{oursystem} has been studied in \cite{17,23,24,YCL} for very restrictive values and ranges of parameters $\alpha$ and $\beta$. Also, existence, decay rates and some qualitative properties of weak solutions are shown in \cite{Oliveira}. Recently, we also become aware of the works \cite{Cai,Varga}. In \cite{Cai}, the system \eqref{oursystem} is shown to be well-posed (with $b=0$) in the whole space for a smaller range of powers $\alpha$ than the our. Our result, with periodic boundary conditions, obviously generalizes their result. In \cite {Varga}, the system \eqref{oursystem} is investigated. The authors show existence and uniqueness of solutions for all $\alpha>1$, with Dirichlet boundary conditions and regular enough initial data. Their argument relies on the maximal regularity estimate for the corresponding semi-linear stationary Stokes problem proved using some modification of the nonlinear localization technique.
\vskip6pt
\noindent
Next, we introduce a few commonly used function spaces. By abuse of notation, $\mathcal V$ will refer in the case of Dirichlet boundary conditions \eqref{dirichlet}, to the space  $ \{ u\in \mathcal C_c^{\infty}(\Omega):\; \nabla\cdot u=0\}$ and to $\{u\,\text{is a trigonometric polynomial }:\; \nabla\cdot u=0\}$, in the case of periodic boundary conditions  \eqref{pbc}. Furthermore, we introduce
\begin{align*}
&{\bf H}:={\rm closure \;of}\;\mathcal{V}\;{\rm in}\;L^2(\Omega) \quad \text{and}\quad  {\bf V}:={\rm closure \;of}\;\mathcal{V}\;{\rm in}\;\1.
\end{align*}
The space $\mathbf H$ is endowed with the scalar product, $\langle\cdot,\cdot\rangle_{\bf H}$ induced by $L^2(\Omega)$. The Hilbert space $\mathbf V$ is equipped with the scalar product $\langle u,v\rangle_{\bf V} =\sum_{i=1}^3 \langle D_i\,u,D_i\,v\rangle_{{\bf H}}$ in the Dirichlet case, and  $\langle u,v\rangle_{\bf V} =\langle u,v\rangle_{\bf H}+\sum_{i=1}^3 \langle D_i\,u,D_i\,v\rangle_{{\bf H}}$ in the periodic case. In particular, it is well known that $ \bf V\subset H\equiv H'\subset V'$, with dense inclusions and continuous injections, see, {\it e.g.}, \cite{Constantin,Temam}. From now on we will use the notation $\nr \cdot\nr_p$ for all $p\geq1$ instead of $\nr\cdot\nr_{L^p(\Omega)} $  and $\nr\cdot\nr_{\infty,2}$ instead of $\nr \cdot\nr_{L^\infty(\mathbb R^+,L^2(\Omega))}$ for lightness of the notation. In the sequel, several inequalities will involve $\epsilon_i$ for $i=0,1,\dots$ originating from the application of Young's inequality. These inequalities will be valid for all $\epsilon, \epsilon_i>0$ for all $i=0,1,\ldots$ and we will omit mentioning this. Eventually, let us recall the Lady\v{z}henskaya, Agmon's, and Sobolev inequalities (see, {\it e.g.}, \cite{Constantin,Lady,Temam,Temam1})
\begin{align*}
&\nr u\nr_4 \leq \kappa_1 \,\nr u\nr_2^\frac14\,\nr u\nr_{H^1}^\frac34, \quad \text{for all}\quad u \in H^1(\Omega),\\
&\nr u\nr_{\infty} \leq  \kappa_2\,\nr u\nr^\frac12_{H^1}\,\nr  u\nr^\frac12_{H^2},\quad \text{ for all }\quad u\in  H^2(\Omega),\\
&\nr u\nr_6 \leq \kappa_3\,\nr u \nr_{H^1}, \quad \text{for all}\quad u \in H^1(\Omega),
\end{align*}
where $\kappa_1,\kappa_2,\kappa_3>0$, denote dimensionless scale invariant constants that depend only on the shape of the domain $\Omega$. These inequalities will be used tacitly in the estimates we will establish in this paper.
\vskip6pt
\noindent Our starting point is the existence and uniqueness of weak solutions to system \eqref{oursystem},\eqref{pbc}. More precisely, we have the following 
\vskip6pt
\begin{thm} \label{thmweak}
Let $f\in L^\infty(\mathbb R^+;{\bf H})$ and $u_0\in {\bf H}$. Assume $\alpha>\beta\geq 0,\,a>0$ and $b\in\mathbb{R}$, then systems  \eqref{oursystem},\eqref{pbc} and  \eqref{oursystem},\eqref{dirichlet} have a weak solution satisfying
\begin{align*}
&u \in C^0(\mathbb R^+;{\bf H}_{\rm weak})\cap L^\infty_{\rm loc}(\mathbb R^+; {\bf H})\cap L^2_{\rm loc}(\mathbb R^+; {\bf V}) \cap L^{{2\alpha+2}}_{\rm loc}(\mathbb R^+;L^{{2\alpha+2}}(\Omega)),\\&{\rm and}\\
&\limsup_{t\rightarrow +\infty}\,\nr u(t)\nr_{{\0}}\leq \rho_0,
\end{align*}
where
\[
\rho_0= \left[\frac{1}{2\nu}\,\left(\nr f\nr^2_{\infty,2} + \left\lbrace \left[\frac{2(1+\car)}{a}\right]^\frac1\alpha+ \car \,\left[\frac{4}{a}\right]^\frac{\beta+1}{\alpha-\beta}+2\,\left[ \frac\nu a\right]^{\frac{\alpha+1}{\alpha}}\right\rbrace\,|\Omega|\right)\right]^\frac12,\]
and $\car$ stands for the characteristic function of the set $\{b<0\}$. Moreover, if $2\alpha\geq3$, then the weak solutions depend continuously on the initial data, in particular they are unique.
\end{thm}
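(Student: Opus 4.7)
\medskip

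\noindent\textbf{Proof proposal.} The plan is the standard three-step Faedo--Galerkin scheme (construction, a priori bounds, passage to the limit), followed by a separate energy argument for uniqueness in which the condition $2\alpha\ge 3$ is used precisely to close Gronwall. Let $\{w_j\}_{j\ge1}$ be the eigenfunctions of the Stokes operator, let $P_n$ be the orthogonal projection of $\mathbf H$ onto $\mathrm{span}\{w_1,\dots,w_n\}$, and let $u_n$ solve the projected ODE system obtained by applying $P_n$ to \eqref{oursystem}. Local existence of $u_n$ follows from the Cauchy--Lipschitz theorem; global existence on $\mathbb R^+$ is a consequence of the a priori bound below.

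Testing the Galerkin equation with $u_n$ yields
\[
\tfrac12\tfrac{d}{dt}\nr u_n\nr_2^2+\nu\nr\nabla u_n\nr_2^2
+a\nr u_n\nr_{2\alpha+2}^{2\alpha+2}+b\nr u_n\nr_{2\beta+2}^{2\beta+2}
=\langle f,u_n\rangle_{\mathbf H}.
\]
When $b<0$, Young's inequality with exponents $\frac{\alpha+1}{\beta+1}$ and its conjugate (using $\alpha>\beta$) absorbs $|b|\nr u_n\nr_{2\beta+2}^{2\beta+2}$ into half of $a\nr u_n\nr_{2\alpha+2}^{2\alpha+2}$ at the cost of a constant multiple of $|\Omega|$. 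A second application of Young (to bound $\nr u_n\nr_2^2$ by $\nr u_n\nr_{2\alpha+2}^{2\alpha+2}+|\Omega|$) produces a coercive lower bound that yields a differential inequality of the form $\tfrac{d}{dt}\nr u_n\nr_2^2+\eta\nr u_n\nr_2^2\le C(f,\alpha,\beta,a,|\Omega|)$, and integrating gives both the uniform $L^\infty(\mathbf H)$ bound and, via a standard limsup computation, the absorbing ball inequality with exactly the constant $\rho_0$ announced in the statement. Reabsorbing the second Young step into the energy identity gives the $L^2_{\mathrm{loc}}(\mathbf V)\cap L^{2\alpha+2}_{\mathrm{loc}}(L^{2\alpha+2})$ estimate. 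From the equation one then bounds $\partial_t u_n$ in $L^{p}_{\mathrm{loc}}(\mathbf V'+L^{(2\alpha+2)/(2\alpha+1)})$ for a suitable $p>1$, and the Aubin--Lions lemma delivers a subsequence converging strongly in $L^2_{\mathrm{loc}}(\mathbf H)$, almost everywhere in $\Omega\times\mathbb R^+$, and weak-$*$ in all of the above spaces. Almost everywhere convergence plus uniform $L^{2\alpha+2}$ bounds identify the weak limits of $|u_n|^{2\alpha}u_n$ and $|u_n|^{2\beta}u_n$ via a Lions-type lemma; the convective term passes to the limit by the usual Sobolev arguments. Weak continuity into $\mathbf H$ is obtained from the equation.

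For uniqueness under $2\alpha\ge3$, let $u_1,u_2$ be two weak solutions with data $u_{0,1},u_{0,2}$ and set $w=u_1-u_2$. Subtracting the equations, pairing with $w$, and using $\nabla\cdot u_1=0$, we obtain
\[
\tfrac12\tfrac{d}{dt}\nr w\nr_2^2+\nu\nr\nabla w\nr_2^2
+a\langle|u_1|^{2\alpha}u_1-|u_2|^{2\alpha}u_2,w\rangle
+b\langle|u_1|^{2\beta}u_1-|u_2|^{2\beta}u_2,w\rangle
=-\langle(w\cdot\nabla)u_2,w\rangle.
\]
The monotonicity inequality $(|x|^{\gamma}x-|y|^{\gamma}y)\cdot(x-y)\ge c_\gamma(|x|+|y|)^{\gamma}|x-y|^2$ handles the two damping brackets, the negative part (if $b<0$) being absorbed into the $a$-term via Young with exponents matched to $\alpha>\beta$, exactly as in the a priori estimate. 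The key term is the trilinear one: rewriting it as $\langle(w\cdot\nabla)w,u_2\rangle$ and applying Hölder with exponents $(2,2\alpha+2,2(\alpha+1)/\alpha)$ gives
\[
|\langle(w\cdot\nabla)u_2,w\rangle|
\le \nr\nabla w\nr_2\,\nr u_2\nr_{2\alpha+2}\,\nr w\nr_{2(\alpha+1)/\alpha}.
\]
Interpolating the last factor between $L^2$ and $L^6$ (and using $\nr w\nr_6\le\kappa_3\nr w\nr_{\mathbf V}$) yields the exponent $\theta=\frac{3}{2(\alpha+1)}$ on $\nr\nabla w\nr_2$, so the right-hand side is bounded by a constant times $\nr u_2\nr_{2\alpha+2}\nr w\nr_2^{1-\theta}\nr\nabla w\nr_2^{1+\theta}$. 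A Young application with exponents $\frac{2}{1+\theta}$ and $\frac{2}{1-\theta}$ absorbs $\nr\nabla w\nr_2^2$ into $\nu\nr\nabla w\nr_2^2/2$ and produces a factor $\nr u_2\nr_{2\alpha+2}^{2/(1-\theta)}\nr w\nr_2^{2}$. The exponent $\tfrac{2}{1-\theta}$ does not exceed $2\alpha+2$ precisely when $\theta\le\frac{\alpha}{\alpha+1}$, i.e.\ when $2\alpha\ge3$; under this condition $\nr u_2\nr_{2\alpha+2}^{2/(1-\theta)}\in L^1_{\mathrm{loc}}(\mathbb R^+)$, and Gronwall's inequality applied to $\nr w\nr_2^2$ yields continuous dependence and, in particular, uniqueness.

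The main obstacle, and what dictates the $2\alpha\ge3$ threshold, is this last Hölder--interpolation--Young chain: every other estimate is either coercive (damping) or routine (Galerkin, Aubin--Lions, monotone limits). The pumping term $b|u|^{2\beta}u$ does not contribute new difficulties provided one systematically uses $\alpha>\beta$ to absorb it into the coercive $a$-term.
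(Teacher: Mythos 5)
Your proposal is correct, and the existence half (Galerkin construction, testing with $u_n$, absorbing the $b$-term and the $L^2$ mass into the coercive damping via Young with exponents dictated by $\alpha>\beta$, Gronwall for the absorbing ball, Aubin--Lions plus a.e.\ convergence to identify the nonlinear limits) is essentially the paper's argument. Where you genuinely diverge is the uniqueness/continuous-dependence step. The paper's written proof in Section \ref{sectionthm12} estimates the trilinear term by Lady\v{z}henskaya's inequality, $\s[(w\cdot\nabla)u]\cdot w\le\kappa_1^2\nr\nabla u\nr_2\nr w\nr_2^{1/2}\nr w\nr_{H^1}^{3/2}$, which after Young requires $\nr\nabla u\nr_2^4$ to be (locally) time-integrable; that information comes from Proposition \ref{hereisthebound} and is only available for strong solutions with $u_0\in{\bf V}$ and $\alpha>1$ --- indeed the paper's subsection is explicitly about \emph{strong} solutions, and the weak-solution uniqueness claim under $2\alpha\ge3$ is attributed to the references rather than proved. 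Your route instead pays for the gradient of $w$ with the $L^{2\alpha+2}_{t,x}$ norm of $u_2$ via the H\"older split $(2\alpha+2,2,2(\alpha+1)/\alpha)$, interpolation with $\theta=\tfrac{3}{2(\alpha+1)}$, and Young with exponents $\tfrac{2}{1\pm\theta}$; the resulting prefactor $\nr u_2\nr_{2\alpha+2}^{2/(1-\theta)}$ is locally integrable exactly when $\tfrac{2}{1-\theta}\le 2\alpha+2$, i.e.\ $2\alpha\ge3$, and your exponent bookkeeping checks out. This is the right argument for the weak class, since it uses only the regularity the damping term actually provides, and it explains the threshold $2\alpha\ge3$, which the paper's own estimate does not. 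Two small points deserve a sentence in a final write-up: (i) the identity $\tfrac12\tfrac{d}{dt}\nr w\nr_2^2=\langle\partial_t w,w\rangle$ for the difference of two \emph{weak} solutions needs justification (here $u\in L^{2\alpha+2}_{t,x}$ with $2\alpha+2\ge5$ puts $\partial_t w$ in the dual of $L^2({\bf V})\cap L^{2\alpha+2}(L^{2\alpha+2})$, so the Lions--Magenes lemma applies; the paper handles the analogous issue only by the informal Remark \ref{Remreg}); and (ii) after interpolating through $\nr w\nr_6\le\kappa_3\nr w\nr_{H^1}$ you should split off the lower-order $\nr w\nr_2^\theta$ contribution, which produces an additional harmless factor $\nr u_2\nr_{2\alpha+2}^2\in L^1_{\rm loc}$. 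Neither affects the validity of your scheme.
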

\noindent This Theorem ensures the existence and uniqueness of weak solutions to system \eqref{oursystem} with Dirichlet type boundary condition \eqref{dirichlet} or periodic boundary conditions \eqref{pbc}  if $\alpha\geq \frac32$ . This result was already obtained in \cite{Oliveira, Varga}. It is rather easy to extend the existence to the range $\alpha>-\frac12$ but not the uniqueness. Furthermore, one can also show  the so-called {\it finite-time extinction} of the solutions if $-\frac12<\alpha<\frac12$, using Gagliardo-Nirenberg-Sobolev inequality and we refer the reader to \cite{Oliveira}. Let us also mention that the assumption $f\in L^\infty(\mathbb R^+;{\bf H})$ can be weakened by assuming only the local integrability of time of  $\nr f\nr^2_{\bf H}$.
\vskip6pt
\noindent For more regular solutions of system \eqref{oursystem},\eqref{pbc} we have:
\vskip6pt
\begin{thm}\label{thm}
Let $f\in L^\infty(\mathbb R^+;{\bf H})$ and $u_0\in {\bf V}$. Assume $\alpha>1, 0\leq \beta<\alpha , a>0$ and $b\in\mathbb{R}$. Then system  \eqref{oursystem},\eqref{pbc} has a unique global strong solution satisfying
\begin{align*}
&u \in C^0_{\rm b}(\mathbb R^+;{\bf V}) \cap L^2_{\rm loc}(\mathbb R^+; H^2(\Omega) \cap {\bf V}) \cap L^{{2\alpha+2}}_{\rm loc}(\mathbb R^+;L^{{2\alpha+2}}(\Omega)),
\end{align*}
and $\limsup_{t\rightarrow +\infty}\,\nr \nabla u(t)\nr_{2}\leq \rho_1$, where 
\begin{align*}
&\rho_1:=\left[ \frac{1}{\nu}\, \left\lbrace \left( 2+(\mathcal A_1+1)\,\left[1+\frac1\nu\right]\right)\,\nr f\nr^2_{\infty,2}+ (\mathcal A_1+1)\,\left[\eta_0+\frac{\eta_1}{\nu}\right]  \right\rbrace\right]^\frac12,
\end{align*}
with
\[\mathcal A_1:= 2\left\lbrace \left[\frac{\nu^\alpha(1+2\alpha)}{2(1+\car)}\right]^{\frac1{1-\alpha}} + \car\left[\frac{4^\beta[|b|(1+2\beta)]^\alpha}{[a(1+2\alpha)]^\beta}\right]^{\frac1{\alpha-\beta}} \right\rbrace,\]
and
\[\eta_0= \left\lbrace \left[\frac{2(1+\car)}{a}\right]^\frac1\alpha+ \car \,|b|\,\left[\frac{4|b|}{a}\right]^\frac{\beta+1}{\alpha-\beta}\right\rbrace\,|\Omega|,\quad\eta_1:= \eta_0 + \frac a2\,\left[ \frac\nu a\right]^{\frac{\alpha+1}{\alpha}}\,|\Omega|.\]
Furthermore, the solution depends continuously on the initial data. If in addition, if $u_0\in L^{2\alpha+2}(\Omega)$ then $u\in L^{\infty}_{\rm loc}(\mathbb R^+;L^{{2\alpha+2}}(\Omega))$ and $\partial_t u \in L^2_{\rm loc}(\mathbb R^+;L^2(\Omega))$.
\end{thm}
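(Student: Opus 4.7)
The plan is to construct the strong solution via a Galerkin scheme based on the eigenfunctions of the Stokes operator, derive ${\bf V}$-level a priori estimates that also yield the $\limsup$ bound with constant $\rho_1$, pass to the limit using Aubin--Lions compactness, and obtain uniqueness and continuous dependence from an energy estimate on the difference of two solutions.

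The crucial a priori estimate is obtained by testing \eqref{oursystem} against $-\Dd u$. Integration by parts produces only nonnegative contributions from the Brinkman--Forchheimer terms,
\[
 \s(-\Dd u)\cdot(|u|^{2\gg}u)\,dx \geq \s|u|^{2\gg}|\nabla u|^2\,dx \qquad\text{for}\qquad \gg\in\{\aa,\bb\},
\]
so the damping with $a>0$ contributes the coercive weighted gradient $a\s|u|^{2\aa}|\nabla u|^2\,dx$. Together with $\nu\nr\Dd u\nr_2^2$ from the viscous term this leads to
\[
 \tfrac12\tfrac{d}{dt}\nr\nabla u\nr_2^2 + \nu\nr\Dd u\nr_2^2 + a\s|u|^{2\aa}|\nabla u|^2\,dx \leq |\langle (u\cdot\nabla)u,\Dd u\rangle| + |b|\car\s|u|^{2\bb}|\nabla u|^2\,dx + \nr f\nr_2\nr\Dd u\nr_2.
\]
The convective term is handled by $\s|u||\nabla u||\Dd u|\,dx \leq \tfrac{\nu}{4}\nr\Dd u\nr_2^2 + C_\nu\s|u|^2|\nabla u|^2\,dx$, and then the key step exploits $\aa>1$: Hölder with conjugate exponents $\aa$ and $\aa/(\aa-1)$ combined with Young gives
\[
 \s|u|^2|\nabla u|^2\,dx \leq \ee\s|u|^{2\aa}|\nabla u|^2\,dx + C_\ee\nr\nabla u\nr_2^2.
\]
The same interpolation, using $\bb<\aa$, absorbs $|b|\s|u|^{2\bb}|\nabla u|^2\,dx$ into $a\s|u|^{2\aa}|\nabla u|^2\,dx$ up to a $C\nr\nabla u\nr_2^2$ remainder. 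Combining everything with the $L^2$ bound of Theorem~\ref{thmweak} via the uniform Grönwall lemma delivers the ${\bf V}$-regularity together with the asymptotic estimate $\limsup\nr\nabla u\nr_2\leq \rho_1$ and the explicit expression for $\rho_1$.

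This estimate is first established rigorously on the Galerkin truncations $u_n$, producing uniform bounds in $L^\infty(0,T;{\bf V})\cap L^2(0,T;H^2(\Om)\cap{\bf V})\cap L^{2\aa+2}(0,T;L^{2\aa+2}(\Om))$. Estimating $\pp_t u_n$ in an appropriate dual space (exploiting the uniform bound of $|u_n|^{2\aa}u_n$ in $L^{(2\aa+2)/(2\aa+1)}_{t,x}$) and invoking Aubin--Lions yields strong convergence in $L^2_{t,x}$, which together with the weighted gradient bound is enough to pass to the limit in every nonlinear term via Vitali's theorem. Uniqueness and continuous dependence follow from testing the equation for $w=u_1-u_2$ against $w$: the monotonicity $\langle |u_1|^{2\aa}u_1 - |u_2|^{2\aa}u_2,\,w\rangle \geq 0$ discards the Forchheimer contribution, while the convective and pumping remainders are controlled using the already-established $H^2$ regularity and the Agmon, Ladyzhenskaya and Sobolev inequalities recalled in the introduction. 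The additional regularity under $u_0\in L^{2\aa+2}(\Om)$ is obtained by testing \eqref{oursystem} with $\pp_t u$ and using the identity $\s|u|^{2\aa}u\cdot\pp_t u\,dx = \frac{1}{2\aa+2}\frac{d}{dt}\nr u\nr_{2\aa+2}^{2\aa+2}$ to close bounds on $u$ in $L^\infty_{\rm loc}(\mathbb R^+;L^{2\aa+2}(\Om))$ and on $\pp_t u$ in $L^2_{\rm loc}(\mathbb R^+;L^2(\Om))$.

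The principal obstacle is the three-dimensional convective term $(u\cdot\nabla)u$, which is exactly what prevents global strong well-posedness for 3D Navier--Stokes. The assumption $\aa>1$ is sharp for the present scheme because it is precisely the threshold at which the Brinkman--Forchheimer damping $a\s|u|^{2\aa}|\nabla u|^2\,dx$ dominates $\s|u|^2|\nabla u|^2\,dx$ via Hölder; this absorption is the structural ingredient that unlocks the entire estimate and marks the key departure from the classical Navier--Stokes analysis.
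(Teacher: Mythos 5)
Your proposal follows essentially the same route as the paper: Faedo--Galerkin approximation, the $L^2$ energy estimate of Theorem~\ref{thmweak}, testing against $-\Delta u$ and absorbing $\int_\Omega |u|^2|\nabla u|^2\,dx$ (and the $\beta$-term) into the damping term $a\int_\Omega|u|^{2\alpha}|\nabla u|^2\,dx$ by H\"older--Young interpolation exploiting $\alpha>1$, a uniform-Gr\"onwall device for the $\limsup$ bound, testing against $\partial_t u$ for the extra regularity, and an $L^2$ energy estimate on the difference for uniqueness. The paper writes the uniform Gr\"onwall step out by hand (integrating \eqref{tointegjusthere} over $[s,t]$ and then averaging in $s$ over $[t-1,t]$), but that is the same mechanism you invoke, and your ordering of Cauchy--Schwarz and H\"older in the convective term is an immaterial variant of \eqref{cumbersome}.

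The one place where your sketch is too lossy is the uniqueness step when $b<0$. You propose to discard the Forchheimer difference using only $\langle |u_1|^{2\alpha}u_1-|u_2|^{2\alpha}u_2,\,w\rangle\geq 0$ and then to control the pumping remainder $|b|\int_\Omega(|u_1|+|u_2|)^{2\beta}|w|^2\,dx$ from the established $H^2$ regularity via Agmon, Ladyzhenskaya and Sobolev. That closes only for small $\beta$: since $u\in L^2_{\rm loc}(\mathbb R^+;H^2(\Omega))$ gives merely $\|u\|_\infty\in L^2_{\rm loc}$, the resulting Gr\"onwall coefficient is locally integrable only for $\beta\leq 1$, and the Gagliardo--Nirenberg alternative caps out around $\beta<2$, whereas the theorem allows any $0\leq\beta<\alpha$. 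The paper instead retains the quantitative coercivity \eqref{positivity}, namely $\kappa_0\,\nr(|u_1|+|u_2|)^{\alpha}|w|\nr_2^2\leq\langle |u_1|^{2\alpha}u_1-|u_2|^{2\alpha}u_2,\,w\rangle$, and interpolates the pumping remainder between this weighted term and $\nr w\nr_2^2$ as in \eqref{diff}; this is what closes the argument for the full range $\beta<\alpha$. Keeping that coercive term is the only substantive correction your argument needs.
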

\noindent In particular, Theorem \ref{thm} ensures the global well-posedness of the traditional 3D BFeD system with periodic boundary conditions. Let us mention that we use here periodic boundary conditions instead of the Dirichlet ones to avoid technicalities and present a clear picture of the data assimilation algorithm (see below). However, our results remain valid with Dirichlet boundary condition as it will be done in a follow-up paper.  In addition, we observe that these strong solutions are unique within the class of weak solutions as it will be described below. Observe, however, that as a consequence of Theorem \ref{thmweak} it follows that system \eqref{oursystem},\eqref{pbc} has an absorbing ball in $L^2(\Omega)$, and by the regularity properties guaranteed by Theorem \ref{thm} (for $\alpha>1$) one is able to show that  our system has a  finite-dimensional global attractor, provided $f\in \bf H$ is time independent (cf. \cite{Constantin,Temam1}).
\begin{remark}
Let us mention that in the paper \cite{Varga}, there is no result concerning global existence and uniqueness of strong solutions with initial data in $H^1$. However, using the maximal regularity technique, the authors showed global existence and uniqueness for initial data in $ u_0\in H^2$. This implies in turn that the solution belongs to $L^\infty (\mathbb R^+; L^\infty(\Omega))$ which yields simpler estimates concerning the nonlinear terms.
\end{remark}

\vskip6pt
\noindent  Theorem \ref{thm} and Theorem \ref{thmweak} show that the nonlinear damping term, when $a>0$, dominates the pumping term for all $b\in \mathbb R$. However, when $a<0$, we can show that the pumping term may dominate in some particular situations, for large enough initial data, a finite-time blow-up of the solutions occurs. In other words,  we show that it is possible to construct a finite-time blowing up solutions when $a<0$ regardless the value of $b$. For that purpose, we consider system \eqref{oursystem} in the  periodic channel
\begin{equation}\label{perdomain}
\Omega=\left\lbrace(x,y,z)\;:\quad x\in \mathbb T,\quad y\in \mathbb T,\quad 0\leq z\leq L\right\rbrace,
\end{equation}
for a given $L>0$, subject to the mixed periodic and Dirichlet boundary conditions
\begin{equation}\label{bc}
\left\lbrace
\begin{array}{ll}
&u(x,y,z,t)=u(x+L,y,z,t)=u(x,y+L,z,t)=u(x+L,y+L,z,t),\\&\\
&u(x,y,0,t)=u(x,y,L,t)=0,\quad{\rm for \:\:all}\quad x,y\in \mathbb{T} \quad {\rm and}\quad z\in[0,L].
\end{array}
\right.
\end{equation}
\vskip6pt
\noindent Then, we have the following:
\vskip6pt
\begin{thm}\label{blowupsol}
Let $\Omega$ be as in \eqref{perdomain}, $a<0$ and $b\in\mathbb{R}$. Assume that $\alpha>\beta\geq0$. Then, there exists an initial data $u_0$ for which the corresponding solution of system \eqref{oursystem},\eqref{bc} blows up in finite time.
\end{thm}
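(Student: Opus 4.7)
The plan is to exploit the channel geometry by constructing a shear-flow solution that reduces the 3D problem to a one-dimensional semilinear heat equation with an anti-damping nonlinearity, for which blow-up can be established by a first-eigenfunction / Jensen-inequality argument.

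\textbf{Step 1 (reduction to 1D).} I look for a solution of \eqref{oursystem}, \eqref{bc} with $f=0$ of the special form
\[
 u(x,y,z,t)=(w(z,t),0,0),\qquad p\equiv 0,
\]
and initial datum $u_0=(w_0,0,0)$ with $w_0\in C_c^\infty(0,L)$, $w_0\geq 0$, $w_0\not\equiv 0$. Because $u$ depends only on $z$, the divergence-free condition and the convective term $(u\cdot\nabla)u$ both vanish identically; the $x,y$-periodicity is automatic and the Dirichlet condition at $z=0,L$ reduces to $w(0,t)=w(L,t)=0$. The system \eqref{oursystem} collapses to the scalar equation
\[
 \partial_t w-\nu\,\partial_z^2 w+a\,|w|^{2\alpha}w+b\,|w|^{2\beta}w=0,\qquad z\in(0,L),\ t>0.
\]
Since the nonlinearity vanishes at $w=0$ and is locally Lipschitz in $w$ for a classical solution, the maximum principle guarantees that $w(z,t)\geq 0$ as long as the (local) smooth solution exists. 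Thus it suffices to prove that, for sufficiently large $w_0\geq 0$, this 1D problem admits no global smooth solution.

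\textbf{Step 2 (ODE for the first-mode projection).} Let $\psi(z)=\sin(\pi z/L)$, so that $-\psi''=\lambda_1\psi$ with $\lambda_1=\pi^2/L^2$, and set
\[
 Y(t):=\int_0^L w(z,t)\,\psi(z)\,dz,\qquad C_\psi:=\int_0^L\psi\,dz=\tfrac{2L}{\pi}.
\]
Multiplying the 1D equation by $\psi$, integrating over $(0,L)$ and using $\psi(0)=\psi(L)=0$ produces
\[
 Y'(t)+\nu\lambda_1 Y(t)+a\!\int_0^L w^{2\alpha+1}\psi\,dz+b\!\int_0^L w^{2\beta+1}\psi\,dz=0.
\]
Because $w\geq 0$ and $\psi\,dz/C_\psi$ is a probability measure on $(0,L)$, the convexity of $s\mapsto s^{2\alpha+1}$ and $s\mapsto s^{2\beta+1}$ on $[0,\infty)$ together with Jensen's inequality yield
\[
 \int_0^L w^{2\alpha+1}\psi\,dz\geq C_\psi^{-2\alpha}\,Y^{2\alpha+1},\qquad \int_0^L w^{2\beta+1}\psi\,dz\geq C_\psi^{-2\beta}\,Y^{2\beta+1}.
\]
Since $a<0$, the $\alpha$-term in the identity above becomes $+|a|\int w^{2\alpha+1}\psi\geq |a|C_\psi^{-2\alpha}Y^{2\alpha+1}$.

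\textbf{Step 3 (control of the lower-order term and ODE blow-up).} If $b\leq 0$ the term $-b\int w^{2\beta+1}\psi$ is non-negative and can simply be dropped, giving
\[
 Y'(t)\geq |a|\,C_\psi^{-2\alpha}\,Y(t)^{2\alpha+1}-\nu\lambda_1\,Y(t).
\]
If $b>0$ I absorb the $\beta$-contribution into the $\alpha$-contribution by H\"older with conjugate exponents $\tfrac{2\alpha+1}{2\beta+1}$ and $\tfrac{2\alpha+1}{2\alpha-2\beta}$,
\[
 \int_0^L w^{2\beta+1}\psi\,dz\leq C_\psi^{\frac{2\alpha-2\beta}{2\alpha+1}}\left(\int_0^L w^{2\alpha+1}\psi\,dz\right)^{\frac{2\beta+1}{2\alpha+1}},
\]
followed by Young's inequality (note $\tfrac{2\beta+1}{2\alpha+1}<1$ since $\alpha>\beta$), choosing the parameter so as to absorb at most $|a|/2$ of the pumping term. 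This yields, in both cases,
\[
 Y'(t)\geq c_1\,Y(t)^{2\alpha+1}-c_2\,Y(t)-c_3,
\]
with constants $c_1>0$ and $c_2,c_3\geq 0$ depending only on $\alpha,\beta,a,b,\nu,L$. Choosing $w_0\geq 0$ so large that $c_1 Y(0)^{2\alpha+1}>2(c_2 Y(0)+c_3)$, monotonicity of $Y$ gives $Y'\geq \tfrac{c_1}{2}Y^{2\alpha+1}$ throughout its life, and the standard comparison with the ODE $\dot{Z}=\tfrac{c_1}{2}Z^{2\alpha+1}$ shows that $Y(t)\to+\infty$ at a time $T^*\leq (\alpha c_1 Y(0)^{2\alpha})^{-1}<\infty$.

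\textbf{Step 4 (conclusion and main obstacle).} Finite-time blow-up of $Y$ forces $\|w(\cdot,t)\|_{L^1(0,L)}$, and hence $\|u(\cdot,t)\|_{L^2(\Omega)}=L\,\|w(\cdot,t)\|_{L^2(0,L)}^{1/2}\cdot L^{1/2}$ after a Cauchy--Schwarz comparison, to be unbounded before $T^*$; consequently the local strong solution of \eqref{oursystem}, \eqref{bc} emanating from $u_0=(w_0,0,0)$ cannot be continued past $T^*$, proving the theorem. The only real technical point is the treatment of the $b$-term when $b>0$, where it competes with the pumping; this is handled cleanly by the H\"older--Young absorption above, which works precisely because $\beta<\alpha$ and because the diffusion and damping-of-$Y$ terms are only linear in $Y$, while the pumping supplies a superlinear $Y^{2\alpha+1}$ that dominates for large initial data.
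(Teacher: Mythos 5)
Your proof is correct and follows essentially the same route as the paper: the shear-flow ansatz $u=(w(z,t),0,0)$, $p\equiv 0$ reduces the system to the one-dimensional semilinear heat equation, nonnegativity is preserved, and pairing with $\sin(\pi z/L)$ together with the Jensen-type inequality (the paper's Lemma \ref{lemtrig}) yields a superlinear ODE for the first-mode projection that blows up for large data. The only point of divergence is your H\"older--Young absorption of the $\beta$-term when $b>0$, which is in fact a more careful treatment of that term than the paper's direct invocation of Lemma \ref{lemtrig}, whose stated inequality bounds $\int_0^L \phi^{2\beta+1}\sin(\pi z/L)\,dz$ from below rather than from above.
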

\vskip6pt
\noindent
Basically, the idea of data assimilation is to incorporate spatially discrete measurements and observations into the physical or numerical model. The main motivation of data assimilation is that deterministic models are imperfect, most of the time incomplete because of the inability to account for all relevant process.  In particular, it has played a central role in the improvement of weather forecast and there is a growing interest in applying data assimilation to problems arising from several applications. At the mathematical level and in the case of our model \eqref{oursystem}, the method of continuous data assimilation was introduced first in \cite{azouani} in the context of the 2D Navier-Stokes equations. A follow-up work was done in \cite{EdrissDeb} and \cite{Aseel} (see also references therein, in particular \cite{BOT} for the case of stochastically noisy data). The algorithm can be described as follows. Assume the initial data $u_0$ of system \eqref{oursystem},\eqref{pbc} is missing. Also, assume that discret spatial observations of $u(t)$ can be used to construct an interpolation operator $\mathcal I_h(u)$ where $h$ denotes spatial  resolution of the collected measurements. For instance, $\mathcal I_h(u)$ can represent an interpolant operator based on spatial observations of the reference solution of \eqref{oursystem} at a coarse spatial resolution of size $h$. Now, Let $\mu>0$ be a nudging parameter, to be fixed later on, and consider the following data assimilation algorithm
\begin{align}{\label{oursystemu}}
\left\lbrace\begin{array}{ll}
&\partial_t\,v-\nu\,\Delta\,v +(v\cdot\nabla)\,v +\nabla\,q+a\,|v|^{2\alpha}\,v+b\,|v|^{2\beta}\,v=f+ \mu\,\left(\mathcal I_h(u)-\mathcal I_h(v)\right),\\ \\
& \nabla\cdot v=0,\;v\vert_{t=0}=v_0,
\end{array}
\right.
\end{align}
where $v_0$ is arbitrary (to be specified in the statement of the Theorems) with the periodic boundary conditions
\begin{equation}\label{pbcmu} \left\lbrace\begin{array}{lcl}
&v(x+L,y,z,t)=v(x,y+L,z,t)=v(x,y,z+L,t) =v(x,y,z,t),\\ &\\
&q(x+L,y,z,t)=q(x,y+L,z,t)=p(x,y,z+L,t) =q(x,y,z,t),\end{array}\right.
\end{equation}
Then, we can obtain the original unknown solution $u(t)$ of system \eqref{oursystem}, asymptotically in time, as the limit of $v(t)$, the solution of \eqref{oursystemu}, provided the nudging parameter $\mu$ is large enough and the resolution parameter $h$ of the measurements of $u(t)$ is small enough. Furthermore, the convergence holds at an exponential rate.
\vskip6pt
\noindent In this paper, we will consider the following  types of interpolant operators. The first is a linear operator that approximates identity, $\mathcal I^1_h:\, \1\rightarrow L^2(\Omega)$, such that
\begin{equation}\label{b1}\nr \psi-\mathcal I^1_h(\psi)\nr^2_\0 \leq c_0\,h^2\,\nr \nabla\,\psi\nr^2_{\0},\end{equation}
for every $\psi\in H^1(\Omega)$, where $c_0>0$ is a dimensionless constant. The second interpolant operator, $\mathcal I^2_h:\, H^2(\Omega)\rightarrow L^2(\Omega)$, is  also a linear operator that approximates identity such that  
\begin{equation}\label{b2}\nr \psi-\mathcal I^2_h(\psi)\nr^2_\0 \leq c_0\,h^2\,\nr \nabla\,\psi\nr^2_{\0}+c_1\,h^4\,\nr \Delta\,\psi\nr^2_{\0},\end{equation}
for every $\psi \in H^2(\Omega)$, where $c_0>0$ and $c_1>0$ are dimensionless constants.
\vskip6pt \noindent Before we state our results concerning system \eqref{oursystemu},\eqref{pbcmu}, let us give examples of such interpolant operators. For instance, let $\phi_k(x)=\frac1{L^3}\,e^{\frac{2\pi i}{L^3} k\cdot x},$ for $k \in \mathbb Z^3$. The projector onto the low Fourier modes, up to wave numbers $|k|$ such that $|k| \leq \lfloor \frac{L}{2\pi h}\rfloor$ defined as
\begin{align*}
\mathcal I_h^1(\psi)(x) =\mathbf P_k\psi = \sum_{|k| \leq \lfloor \frac{L}{2\pi h}\rfloor} \hat{\psi}_k\,\phi_k(x),
\end{align*}
where $\psi(x) = \sum_{k\in \mathbb Z^3} \,\hat{\psi}_k\,\phi_k(x)$, is an interpolant of the first type. We leave the exercise of showing that this interpolant satisfies the functional inequality $ \nr \psi-\mathcal I^1_h(\psi)\nr^2_\0 \leq c_0\,h^2\,\nr \nabla\,\psi\nr^2_{\0} $ to the reader. The volume element operator is also an interpolant of the first kind. More precisely, if we divide $\Omega$ in cubes $\Omega_k$, for all $k=1,\ldots, N$,  with edge $ L\,N^{-\frac13}$ so that $|\Omega_k|= N^{-1}\,L^3$, then
\begin{align*}
\mathcal I^1_h(\psi)(x)= \sum_{k=1}^N\, \frac{ \mathds 1_{\{\Omega_k\}}(x) }{|\Omega_k|}\,\int_{\Omega_k}\,\psi(y)\,dy,\quad h= L\,N^{-\frac13}.
\end{align*}
That is, we consider that the average of the function $\psi$ on each cube $\Omega_k$ is given, and we refer to \cite{EdrissDeb,azouani} for a proof of \eqref{b1} when $\psi \in H^1(\Omega)$. Eventually, the interpolant operator obtained by using measurement at a discrete set of nodal points in $\Omega=[0,L]^3$. Let $x_i \in \Omega_i$, where $\Omega_i$ denotes the $i^{th}$ cube introduce above, be the points where the measurement of the velocity of the flow is taken. Therefore, the nodal point interpolant operator can be defined as
\[ \mathcal I_h^2(\psi(x))= \sum_{k=1}^N\, \psi(x_k)\,\mathds 1_{\{\Omega_k\}}(x).  \]
We refer the reader to \cite{azouani,EdrissDeb} for a proof showing that $\mathcal I_h^2$ satisfies $\nr \psi-\mathcal I^2_h(\psi)\nr^2_\0 \leq c_0\,h^2\,\nr \nabla\,\psi\nr^2_{\0}+c_1\,h^4\,\nr \Delta\,\psi\nr^2_{\0}$.
\vskip6pt
\noindent
Now, we turn to the statement of our results about system \eqref{oursystemu},\eqref{pbcmu}. Concerning the first interpolant, we have
\begin{thm}\label{thmu}
Let $f\in L^\infty(\mathbb R^+;{\bf H})$ and $v_0\in {\bf V}$. Assume $\alpha>1, 0\leq \beta <\alpha,\, a>0$, $b\in\mathbb{R}$, and $2 \mu \,c_0\,h^2 \leq \nu$. Then system  (\ref{oursystemu}-\ref{pbcmu}) with interpolant $\mathcal I^1_h$ has a unique global strong solution satisfying
\begin{align*}
&v \in C^0_{\rm b}(\mathbb R^+;{\bf V}) \cap L^2_{\rm loc}(\mathbb R^+; H^2(\Omega) \cap {\bf V}) \cap L^{{2\alpha+2}}_{\rm loc}(\mathbb R^+;L^{{2\alpha+2}}(\Omega)).
\end{align*}
Moreover, if $v_0\in L^{2\alpha+2}(\Omega)$, then $v\in  L^{\infty}_{\rm loc}(\mathbb R^+;L^{{2\alpha+2}}(\Omega))$ and $\partial_t v \in L^2_{\rm loc}(\mathbb R^+;L^2(\Omega))$. Furthermore, the solution depends continuously on the initial data $v_0$ in ${\bf V}$ norm.
\end{thm}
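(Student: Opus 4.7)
The plan is to view the system as a perturbation of the model treated by Theorem \ref{thm}. By that theorem the reference trajectory $u$ exists globally with $u\in C^0_{\rm b}(\mathbb R^+;{\bf V})$, so $\mu\mathcal I^1_h(u)$ is a known forcing in $L^\infty(\mathbb R^+;{\bf H})$; the system for $v$ becomes a BFeD equation with this extra forcing on the right and the additional linear term $\mu\mathcal I^1_h(v)$ on the left. One would then essentially rerun the proof of Theorem \ref{thm}: construct finite-dimensional Galerkin approximations $v_n$ (projection onto the first $n$ Fourier modes), derive uniform a priori bounds, and pass to the limit via Aubin-Lions compactness, using the $L^{2\alpha+2}_{\rm loc}$ bound to handle the nonlinear terms $|v|^{2\alpha}v$ and $|v|^{2\beta}v$.

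The a priori estimates are the ones of Theorem \ref{thm} with one new ingredient in each: the feedback. For the $L^2$ estimate, I would pair with $v_n$ and rewrite
\[
\mu\langle \mathcal I^1_h(v_n),v_n\rangle = \mu\nr v_n\nr_2^2 + \mu\langle \mathcal I^1_h(v_n)-v_n,v_n\rangle,
\]
so that after Cauchy-Schwarz, \eqref{b1} and Young's inequality the error is bounded by $\tfrac{\nu}{2}\nr\nabla v_n\nr_2^2+\tfrac{\mu^2 c_0 h^2}{2\nu}\nr v_n\nr_2^2$. The assumption $2\mu c_0 h^2\leq \nu$ makes the coefficient in front of $\nr v_n\nr_2^2$ no larger than $\mu/4$, leaving a strictly positive damping $\tfrac{3\mu}{4}\nr v_n\nr_2^2+\tfrac{\nu}{2}\nr\nabla v_n\nr_2^2 + a\nr v_n\nr_{2\alpha+2}^{2\alpha+2}$ on the left; the $b$-term is absorbed by Young's inequality since $\beta<\alpha$. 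For the $H^1$ estimate, paired with $-\Delta v_n$, the feedback decomposes into a dissipative contribution $\mu\nr\nabla v_n\nr_2^2$, a forcing-like piece $-\mu\langle\mathcal I^1_h(u),\Delta v_n\rangle$, and the interpolation error $\mu\langle \mathcal I^1_h(v_n)-v_n,\Delta v_n\rangle$, all handled as above using the same smallness hypothesis. A Gronwall argument then yields the claimed uniform-in-time $\bf V$ bounds and the $L^2_{\rm loc}(\mathbb R^+;H^2(\Omega))$ bound.

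The real obstacle is inherited from Theorem \ref{thm}: the three-dimensional convective term $\langle(v_n\cdot\nabla)v_n,\Delta v_n\rangle$. Its treatment rests on the identity
\[
-a\langle |v_n|^{2\alpha}v_n,\Delta v_n\rangle = a\int_\Omega|v_n|^{2\alpha}|\nabla v_n|^2\,dx + 2a\alpha\int_\Omega|v_n|^{2\alpha-2}|v_n\cdot\nabla v_n|^2\,dx,
\]
which, together with Ladyzhenskaya, Sobolev and Young's inequalities, provides enough nonlinear dissipation to absorb the bad part of the convective term, precisely because $\alpha>1$, exactly as in Theorem \ref{thm}. The additional regularity under the hypothesis $v_0\in L^{2\alpha+2}(\Omega)$ then follows by pairing the equation successively with $|v|^{2\alpha}v$ and with $\partial_t v$.

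Uniqueness and continuous dependence in $\bf V$ follow from the difference equation satisfied by $w=v^{(1)}-v^{(2)}$ for two solutions corresponding to initial data $v^{(1)}_0,v^{(2)}_0$: the monotonicity inequality $\langle |v^{(1)}|^{2\alpha}v^{(1)}-|v^{(2)}|^{2\alpha}v^{(2)},w\rangle\geq 0$ renders the $a$-term helpful, the $b$-term is absorbed into it ($\beta<\alpha$), the convective perturbation $\langle(w\cdot\nabla)v^{(1)},w\rangle$ is dominated by the combined viscous and nonlinear damping using the $\bf V$-bound on $v^{(1)}$, and the feedback $-\mu\langle\mathcal I^1_h(w),w\rangle$ is treated by the same decomposition and smallness argument, producing a positive damping $\tfrac{3\mu}{4}\nr w\nr_2^2$. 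A Gronwall inequality in ${\bf H}$, followed by a similar one in $\bf V$, closes both uniqueness and the continuous-dependence statement. The one step where the hypothesis $2\mu c_0h^2\leq\nu$ is truly doing work is in each of these coercivity arguments, where it converts the non-self-adjoint operator $\mu\mathcal I^1_h$ into a net dissipation rather than an obstacle to it.
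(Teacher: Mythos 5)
Your proposal is correct and follows essentially the same route as the paper: treat $\mu\,\mathcal I^1_h(u)$ as a known bounded forcing, run the Faedo--Galerkin scheme of Theorem \ref{thm}, and in each energy estimate split the feedback as $\mu\,\mathcal I^1_h(v)=\mu\,v-\mu\,(v-\mathcal I^1_h(v))$ so that \eqref{b1} together with $2\mu c_0h^2\leq\nu$ turns it into net dissipation, with the convective term absorbed by the nonlinear damping (using $\alpha>1$) and uniqueness obtained from the monotonicity of $|v|^{2\alpha}v$ plus Gronwall. The only cosmetic differences are your exact identity for $-a\langle|v|^{2\alpha}v,\Delta v\rangle$ (the paper uses the cruder $(1+2\alpha)\nr|v|^\alpha\nabla v\nr_2^2$ form) and the extra pairing with $|v|^{2\alpha}v$ for the added regularity, neither of which changes the argument.
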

\noindent This Theorem ensures the well-posdness of the data assimilation algorithm when an interpolant of the first kind is used for all relaxation (nudging) parameter $\mu>0$ provided that $h$ is small enough. Furthermore, if $\mu$ is large enough, then we have
\begin{thm}\label{thmconI1}
Let $f\in L^\infty(\mathbb R^+;{\bf H})$, $v_0\in {\bf V}$, $\alpha>1, 0\leq \beta<\alpha, a>0$, and $b\in\mathbb{R}$. Let $\mu$ large enough such that
\[\mu>2\left\lbrace \frac12\,\nu+\frac{2^7\kappa_1^8}{\nu^3}K^4 +\,\car \left[\frac{(2\tilde\kappa_0\,|b|)^\alpha}{(a\,\kappa_0)^\beta}\right]^{\frac1{\alpha-\beta}} \right\rbrace,\]
where $K$ is given in Proposition \ref{hereisthebound} below. Let $h$ be small enough such that $2\mu \,c_0\,h^2 \leq \nu$, and  $u(t)$ be the global unique strong solution of system \eqref{oursystem},\eqref{pbc} given by Theorem \ref{thm}. Then the global unique strong solution of system \eqref{oursystemu},\eqref{pbcmu}, with  interpolant $\mathcal I_h=\mathcal I^1_h$, given by Theorem \ref{thmu} satisfies $\nr u(t)-v(t) \nr_\0 \to 0$, as $t\to +\infty$, at an exponential rate. Furthermore, if  $u_0,v_0\in {\bf V}$ satisfying $\nr v_0\nr_{H^1}\leq \tilde K$, $\nr u_0\nr_{H^1}\leq \tilde K$ with $\tilde K> \sqrt{\rho_0^2+\rho_1^2}$, $1<\alpha<2$, and $\mu$ is large enough so that \eqref{cond132} or \eqref{cond322} holds if $1<\alpha\leq \frac32$ or $\frac32<\alpha<2$, respectively, then $\nr u(t)-v(t) \nr_{H^1(\Omega)} \to 0$, as $t\to +\infty$, at an exponential rate
\end{thm}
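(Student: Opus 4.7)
The plan is to set $w=u-v$ and subtract the two systems, obtaining
\begin{equation*}
\pp_t w-\nu\,\Dd w+(w\cdot\nabla)u+(v\cdot\nabla)w+\nabla(p-q)+a(|u|^{2\aa}u-|v|^{2\aa}v)+b(|u|^{2\bb}u-|v|^{2\bb}v)=-\mu\,\mathcal I_h^1(w),
\end{equation*}
which is legitimate because both $u$ and $v$ are strong solutions by Theorems~\ref{thm} and \ref{thmu}. I would then take the ${\bf H}$ inner product with $w$; by incompressibility, the transport term reduces to $\langle(w\cdot\nabla)u,w\rangle$, while the nudging contribution splits as $-\mu\langle\mathcal I_h^1(w),w\rangle=-\mu\nr w\nr_\0^2+\mu\langle w-\mathcal I_h^1(w),w\rangle$, the second piece being bounded via \eqref{b1}, Cauchy-Schwarz, Young's inequality, and the standing hypothesis $2\mu\, c_0\, h^2\leq\nu$ by $\tfrac{\nu}{2}\nr\nabla w\nr_\0^2+\tfrac{\mu}{4}\nr w\nr_\0^2$.

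For the damping and pumping differences I would use the standard monotonicity inequality $\langle|u|^{2\gg}u-|v|^{2\gg}v,u-v\rangle\geq\kappa_0\s(|u|^{2\gg}+|v|^{2\gg})|w|^2\,dx$; when $b<0$, the pumping integral with exponent $\bb$ is absorbed into half of the $a$-damping integral via Young's inequality with conjugate exponents $\aa/\bb$ and $\aa/(\aa-\bb)$, producing the constant $\big[(2\tilde\kappa_0|b|)^\aa/(a\kappa_0)^\bb\big]^{1/(\aa-\bb)}\nr w\nr_\0^2$ appearing in the hypothesis on $\mu$. For the transport term $\langle(w\cdot\nabla)u,w\rangle$, Ladyzhenskaya's inequality followed by Young's inequality yields a bound of the form $\tfrac{\nu}{4}\nr\nabla w\nr_\0^2+\tfrac{2^7\kappa_1^8}{\nu^3}\nr\nabla u\nr_\0^4\,\nr w\nr_\0^2$, and the uniform-in-time control $\nr\nabla u(t)\nr_\0\leq K$ is supplied by Proposition~\ref{hereisthebound}. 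Collecting these estimates produces
\begin{equation*}
\frac{d}{dt}\nr w\nr_\0^2+\Big(2\mu-\nu-\frac{2^8\kappa_1^8}{\nu^3}K^4-2\,\car\big[(2\tilde\kappa_0|b|)^\aa/(a\kappa_0)^\bb\big]^{1/(\aa-\bb)}\Big)\nr w\nr_\0^2\leq 0,
\end{equation*}
and the hypothesis on $\mu$ makes the coefficient strictly positive, so Gronwall's inequality furnishes the claimed exponential decay of $\nr w(t)\nr_\0$.

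For the $H^1$ convergence I would test the equation for $w$ against $-\Dd w$, a step which is legitimate since both strong solutions lie in $L^2_{\rm loc}(\mathbb R^+;H^2(\Om)\cap{\bf V})$. The diffusion and the nudging term are handled as above, except that Young's inequality is now used to pair the interpolation remainder against $\nr\Dd w\nr_\0$; again the condition $2\mu c_0 h^2\leq\nu$ suffices to absorb the resulting term. The main obstacle is controlling the nonlinear damping and pumping contributions $\langle a|u|^{2\aa}u-a|v|^{2\aa}v-b|u|^{2\bb}u+b|v|^{2\bb}v,-\Dd w\rangle$: using the pointwise bound $||u|^{2\gg}u-|v|^{2\gg}v|\leq C(|u|^{2\gg}+|v|^{2\gg})|w|$, Hölder's inequality, the Sobolev embedding $H^1\hookrightarrow L^6$, and Agmon's inequality to dominate $\nr u\nr_\infty,\nr v\nr_\infty$ by $\nr\cdot\nr_{H^1}^{1/2}\nr\cdot\nr_{H^2}^{1/2}$, one is led to an interpolation whose exponents are admissible only for $1<\aa<2$ and whose balance depends on whether $\aa\leq 3/2$ or $\aa>3/2$, which is the origin of the two distinct threshold conditions \eqref{cond132} and \eqref{cond322}. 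Exploiting the uniform bound $\nr v(t)\nr_{H^1},\nr u(t)\nr_{H^1}\leq\tilde K$ (guaranteed by the hypothesis on the initial data together with the $\bf V$-absorbing property of Theorem~\ref{thm} and its analogue for $v$), Young's inequality and the lower bound on $\mu$ then yield a differential inequality of the form $\tfrac{d}{dt}\nr\nabla w\nr_\0^2+c\nr\nabla w\nr_\0^2\leq\Phi(t)\nr w\nr_\0^2$ with $\Phi\in L^1_{\rm loc}(\mathbb R^+)$; combining Gronwall's inequality with the already-established exponential decay of $\nr w\nr_\0$ gives the exponential decay of $\nr\nabla w(t)\nr_\0$.
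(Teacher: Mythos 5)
Your $L^2$ argument is essentially the paper's: subtract the systems, test with $w=u-v$, use the monotonicity of $\xi\mapsto|\xi|^{2\alpha}\xi$, absorb the $\beta$-term into the damping, bound the transport term by Lady\v{z}henskaya with the $\mu$-independent uniform bound $\nr\nabla u\nr_2\leq K$ from Proposition \ref{hereisthebound}, and control the nudging remainder via \eqref{b1} and $2\mu c_0h^2\leq\nu$; Gronwall then gives the exponential decay. That part is correct.

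The $H^1$ part has two genuine gaps. First, you invoke a uniform-in-time bound $\nr v(t)\nr_{H^1}\leq\tilde K$ as if it were ``guaranteed by the hypothesis on the initial data together with \ldots\ its analogue for $v$.'' It is not: the only a priori uniform $H^1$ bound available for $v$ is $P$ from Proposition \ref{propforvgradbound}, and $P$ depends on $\mu$ (through $M$ and $\mathcal A_2$). If you feed $P$ into the estimates, the largeness condition on $\mu$ takes the form $\mu>C\mu^{8\alpha}+\ldots$, which in general has no solution --- this is precisely the obstruction the paper points out. The hypothesis on $v_0$ only controls the initial datum; to propagate a $\mu$-independent bound one needs the continuity/maximality bootstrap used in the paper (assume $\nr v(t)\nr^2_{H^1}\leq 3\tilde K^2$ on a maximal interval $[0,\overline T)$, prove strict decay of $\nr w\nr_{H^1}$ there using only $K$, $\overline K$, $\tilde K$, and deduce $\overline T=+\infty$ by contradiction). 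Without this step your argument is circular. Second, your claimed final inequality $\frac d{dt}\nr\nabla w\nr_2^2+c\nr\nabla w\nr_2^2\leq\Phi(t)\nr w\nr_2^2$ is not achievable: the dangerous coefficients produced by $\int_\Omega[(v\cdot\nabla)w]\cdot(-\Delta w)$ (via Agmon) and, for $\frac32<\alpha<2$, by the power terms, are of the form $\nr\Delta v\nr_2^2\,\nr\nabla w\nr_2^2$ and $\nr\Delta u\nr_2^2\,\nr\nabla w\nr_2^2$ --- they multiply $\nr\nabla w\nr_2^2$, not $\nr w\nr_2^2$, and $\nr\Delta u\nr_2^2$, $\nr\Delta v\nr_2^2$ are only locally integrable (with the time-averaged bound for $v$ again $\mu$-dependent). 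The paper resolves this by the triangle-inequality transfer \eqref{transfer}, which trades $\nr\Delta v\nr_2^2\nr\nabla w\nr_2^2$ for $\nr\Delta u\nr_2^2\nr\nabla w\nr_2^2$ plus a term absorbed by $\nu\nr\Delta w\nr_2^2$, and then applies the uniform Gronwall Lemma \ref{Gronwall} to $\nr w\nr_{H^1}^2$, using the $\mu$-independent unit-interval average of $\nr\Delta u\nr_2^2$ from \eqref{mainone}; plain Gronwall combined with the $L^2$ decay, as you propose, does not close the argument.
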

\noindent Theorem \ref{thmconI1} shows that our algorithm converges toward the solution of the original system \eqref{oursystem},\eqref{pbc} in $L^2$ norm, Moreover,  Theorem \ref{thmconI1} shows the convergence in the $H^1$ norm if $1<\alpha<2$ for restricted initial data $v_0$ to be inside the absorbing ball of the solution of the original system. Notice that the restriction on the initial data $v_0$ does not limit the applicability of our algorithm. Indeed, $\tilde K$ depends on the size of the absorbing ball of $u(t)$, $\sqrt{\rho_0^2+\rho_1^2}$, which is given {\it a priori}. This restriction is a consequence of the fact that we are enable to control uniformly with respect to $\mu$ the $H^1$ norm of $v(t)$ for all initial data $v_0$. When, the second interpolant is used, we have
\begin{thm}\label{thmu2}
Let $f\in L^\infty(\mathbb R^+;{\bf H})$ and let $u_0, v_0\in {\bf V}$ such that $\nr v_0\nr_{H^1}\leq \tilde K$, $\nr u_0\nr_{H^1}\leq \tilde K$, with $\tilde K >\sqrt{\rho_0^2+\rho_1^2}$. Assume $1<\alpha<2, 0\leq \beta<\alpha, a>0$, and $b\in\mathbb{R}$. Let $u(t)$ be the global unique strong solution of system \eqref{oursystem},\eqref{pbc} given by Theorem \ref{thm}. Let $\mu$ be large enough such that \eqref{toputinthm1} holds if $1<\alpha<\frac32$, or \eqref{toputinthm2} holds if $\frac32\leq \alpha<2$, and $h$ be small enough such that $15\mu\max\{c_0,\sqrt {c_1}\}\,h^2 \leq \nu$. Then system  (\ref{oursystemu}-\ref{pbcmu}), with interpolant operator $\mathcal I_h=\mathcal I^2_h$ has a global unique strong solution satisfying
\begin{align*}
&v \in C^0_{\rm b}(\mathbb R^+;{\bf V}) \cap L^2_{\rm loc}(\mathbb R^+; H^2(\Omega) \cap {\bf V}) \cap L^{\infty}_{\rm loc}(\mathbb R^+;L^{{2\alpha+2}}(\Omega)),\\&{\rm and}\\
&\partial_t v \in L^2_{\rm loc}(\mathbb R^+;L^2(\Omega)).
\end{align*}
Furthermore, the solution depends continuously on the initial data and satisfies $\nr u(t)-v(t) \nr_{H^1(\Omega)} \to 0$, as $t\to +\infty$, at an exponential rate.
\end{thm}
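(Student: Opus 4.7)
The plan is to adapt the Galerkin scheme used for Theorem \ref{thm} to the assimilation equation \eqref{oursystemu}, treating the feedback $\mu(\mathcal I_h^2(u)-\mathcal I_h^2(v))$ as a perturbation and exploiting the decomposition
\[\mu\langle \mathcal I_h^2(u-v),v\rangle = -\mu\nr v\nr_\0^2 + \mu\langle u,v\rangle + \mu\langle v-\mathcal I_h^2(v),v\rangle - \mu\langle u-\mathcal I_h^2(u),v\rangle,\]
and its $H^1$ analogue. The interpolation inequality \eqref{b2} together with Young's inequality converts the last two terms into errors of the form $c_0 h^2\nr\nabla v\nr_\0^2+c_1 h^4\nr\Delta v\nr_\0^2$, which, under the hypothesis $15\mu\max\{c_0,\sqrt{c_1}\}\,h^2\leq\nu$, are absorbed by the dissipation $\nu\nr\Delta v\nr_\0^2$ produced by taking the inner product with $v-\Delta v$. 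Combined with the monotonicity of the damping term $a|v|^{2\alpha}v$, this yields the $L^2$ bound. The conditions \eqref{toputinthm1} and \eqref{toputinthm2} on $\mu$ in the two sub-ranges $1<\alpha<\tfrac32$ and $\tfrac32\leq\alpha<2$ are precisely what is needed, through an absorbing-ball / barrier argument, to keep $\nr v(t)\nr_{H^1}\leq\tilde K$ for all $t\geq 0$ starting from $\nr v_0\nr_{H^1}\leq\tilde K$. The remaining regularity statements, namely $v\in L^\infty_{\rm loc}(\mathbb R^+;L^{2\alpha+2}(\Omega))$, $\partial_t v\in L^2_{\rm loc}(\mathbb R^+;L^2(\Omega))$, and continuous dependence on $v_0$, follow along the same lines as in Theorem \ref{thm}, the extra feedback producing only harmless $L^2$ sources via \eqref{b2}.

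For the $H^1$ convergence set $w=u-v$. Subtracting the two systems yields
\[\partial_t w-\nu\Delta w+(u\cdot\nabla)w+(w\cdot\nabla)v+\nabla(p-q)+a(|u|^{2\alpha}u-|v|^{2\alpha}v)+b(|u|^{2\beta}u-|v|^{2\beta}v)=-\mu\mathcal I_h^2(w).\]
Rewriting $-\mu\mathcal I_h^2(w)=-\mu w+\mu(w-\mathcal I_h^2(w))$ and taking the $L^2$ inner product with $-\Delta w$, the diagonal part of the feedback contributes the coercive term $-\mu\nr\nabla w\nr_\0^2$, while the off-diagonal part is estimated via Cauchy--Schwarz and \eqref{b2} as
\[\mu\nr w-\mathcal I_h^2(w)\nr_\0\,\nr\Delta w\nr_\0\leq \epsilon\nr\Delta w\nr_\0^2+\frac{\mu^2}{4\epsilon}\bigl(c_0 h^2\nr\nabla w\nr_\0^2+c_1 h^4\nr\Delta w\nr_\0^2\bigr),\]
which is absorbed by the viscous dissipation $\nu\nr\Delta w\nr_\0^2$ under the smallness of $\mu h^2$. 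For the nonlinear damping/pumping difference one integrates by parts, extracts the positive contributions involving $|v|^{2\alpha}|\nabla w|^2$ and $|u|^{2\alpha}|\nabla w|^2$, and estimates the mixed terms via Young's inequality, exploiting the uniform $L^{2\alpha+2}$-control of $u$ and $v$ furnished by Theorem \ref{thm} and the $H^1$-barrier on $v$. The trilinear terms $((u\cdot\nabla)w,-\Delta w)$ and $((w\cdot\nabla)v,-\Delta w)$ are bounded by Ladyzhenskaya when $1<\alpha<\tfrac32$ and by Agmon when $\tfrac32\leq\alpha<2$: this dichotomy is what forces the split in \eqref{toputinthm1}--\eqref{toputinthm2}. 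Collecting all estimates and choosing $\mu$ as prescribed, one closes a differential inequality of the form
\[\frac{d}{dt}\nr\nabla w\nr_\0^2+\gamma\nr\nabla w\nr_\0^2\leq 0,\qquad \gamma>0,\]
and Gronwall delivers the exponential decay.

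The main obstacle is the $H^1$-energy estimate for the nonlinear damping difference once $\alpha\geq\tfrac32$. In that range the integrand $|u|^{2\alpha-1}|\nabla w|$ arising after integration by parts cannot be closed by Sobolev embedding alone, and one is forced to appeal to Agmon's inequality $\nr u\nr_\infty\leq\kappa_2\nr u\nr_{H^1}^{1/2}\nr u\nr_{H^2}^{1/2}$, which in turn requires the time-integrated $H^2$-control of $u$ and $v$ produced by the energy identity. This mechanism both explains the ceiling $\alpha<2$ --- beyond which the Agmon exponent no longer compensates for the loss $|u|^{2\alpha-1}$ --- and the need for the uniform barrier $\nr v(t)\nr_{H^1}\leq\tilde K$, since without an a priori $H^1$-bound on $v$ the factors $|v|^{2\alpha-1}$ cannot be controlled independently of $\mu$. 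Once this block is in place, the convergence analysis is a genuine extension to the BFeD setting of the scheme developed in \cite{azouani,EdrissDeb} for the Navier--Stokes equations.
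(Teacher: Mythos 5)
Your outline captures several of the paper's key devices --- the barrier argument keeping $\|v(t)\|_{H^1}$ below a fixed multiple of $\tilde K$ (needed precisely because any direct a priori $H^1$ bound on $v$ depends on $\mu$), the split of the estimates at $\alpha=\tfrac32$ with Agmon's inequality entering for the upper range, and the absorption of the $c_1h^4\|\Delta\cdot\|_2^2$ interpolation error by the viscous dissipation under $15\mu\max\{c_0,\sqrt{c_1}\}h^2\le\nu$. Organizationally you differ from the paper: you first prove global existence of $v$ and then study $w=u-v$, whereas the paper writes $v=u-w$ and runs the entire argument on the equation $\mathcal S_w$ for $w$, so that global existence of $v$ and the convergence are obtained simultaneously from the boundedness and decay of $\|w\|_{H^1}$. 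That difference is largely cosmetic.

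There are, however, two genuine gaps. First, your closing step --- a differential inequality $\frac{d}{dt}\|\nabla w\|_{2}^2+\gamma\|\nabla w\|_{2}^2\le 0$ with a \emph{constant} $\gamma>0$, followed by plain Gronwall --- is not attainable. After estimating the trilinear terms with Agmon's inequality and the damping difference (via interpolation of $L^{4\alpha}$ between $L^6$ and $L^\infty$ when $\alpha>\tfrac32$), the coefficient multiplying $\|w\|_{H^1}^2$ unavoidably contains $\|\Delta u(t)\|_2^2$, which is not bounded uniformly in time; only its average over unit time intervals is controlled, via \eqref{mainone}. The paper therefore must invoke the generalized Gronwall lemma (Lemma \ref{Gronwall}), verifying $\liminf_{t\to\infty}\int_t^{t+1}\xi(s)\,ds\geq\mu/2$ under \eqref{toputinthm1} or \eqref{toputinthm2}; without this lemma your argument does not close. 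Second, your treatment of the damping difference --- integrate by parts and ``extract the positive contributions involving $|v|^{2\alpha}|\nabla w|^2$'' --- is unsubstantiated: the difference $|u|^{2\alpha}u-|v|^{2\alpha}v$ tested against $-\Delta w$ has no usable sign structure after integration by parts, since differentiating the moduli produces cross terms in $\nabla u$ and $\nabla v$ of indefinite sign. The paper instead uses the pointwise bound $\left||u|^{2\alpha}u-|v|^{2\alpha}v\right|\le\kappa_4\left(|u|^{2\alpha}+|v|^{2\alpha}\right)|w|$ and estimates $\int_\Omega(|u|^{2\alpha}+|v|^{2\alpha})\,|w|\,|\Delta w|\,dx$ directly (estimates \eqref{partonealpha} and \eqref{parttwoalpha}); it is in this step, not in the trilinear terms, that the ceiling $\alpha<2$ and the $\mu$-independence issue resolved by the barrier actually arise.
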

\noindent Observe that in this Theorem, we do not assume the initial data $v_0$ to be in ${\bf V} \cap L^{2\alpha+2}(\Omega)$ as in Theorem \ref{thmu}. This is because we restrict the range of $\alpha$ to $1<\alpha<2$, so that by Sobolev embedding $v_0 \in {\bf V}$ implies that  $L^{2\alpha+2}(\Omega)$.
\section{Proof of Theorems \ref{thmweak} and \ref{thm}} \label{sectionthm12}
\noindent In this section, we prove Theorem \ref{thmweak} and Theorem \ref{thm}.
The global (in time) existence of solutions is shown in a classical way by proceeding in three steps. First, we use a Faedo-Galerkin approximation procedure to show the short time existence of solutions. The reader is referred to, for example, \cite{Constantin,Temam,Temam1} for details. Next, we obtain the necessary{\it a priori} bounds that allow to extend the solution of the Faedo-Galerkin system  globally in time. Eventually, we pass to the limit in the approximation procedure using the Aubin compactness Theorem, relying on the established {\it a priori} bounds.
\vskip6pt
\subsection{Existence of local solutions:}\label{existence galerkin}
Let  $\mathcal W_k(x)$ be the eigenfunctions of the Stokes operator, corresponding to the eigenvalues $\lambda_k$, for $k=1,2, \ldots$, repeated according to their multiplicities such that $0<\lambda_1\leq\lambda_2 \leq \cdots \leq \lambda_n\leq \lambda_{n+1}\leq \cdots$ and  $\mathbf V_m:={\rm Span}\left\lbrace\mathcal W_1,\ldots,\mathcal W_m\right\rbrace$. It is well known that the family $\{\mathcal W_k\}_{k=1}^\infty$ forms an orthonormal basis of ${\bf H}$ (see \cite{Constantin,Fois,Temam,Temam1}).  Now, let $T>0$, be an arbitrary time, and  consider the Faedo-Galerkin approximate function
\begin{equation}\label{eqpart0}{\bf u}_m(x,t)=\sum_{k=1}^m g_m^i(t)\,\mathcal W_i(x) \in \mathbf V_m.\end{equation}
Using system \eqref{oursystem},\eqref{pbc}, the unknown coefficients $g_m^i= \int_\Omega \mathbf u_m\cdot \mathcal W_i\,dx$, for $i=1,2, \ldots, m$ solve  the following system of ordinary differential equations in $\mathbf V_m$
\begin{align}
&\frac d{dt}\,\int_\Omega \mathbf u_m\cdot \mathcal W_k\,dx+\nu\,\int_\Omega \nabla\mathbf u_m\cdot \nabla\mathcal W_k\,dx + \int_\Omega (\mathbf u_m\cdot\nabla)\mathbf u_m\cdot \mathcal W_k\,dx \nonumber \\ &\hskip30pt+ a\,\int_\Omega |\mathbf u_m|^{2\alpha} \mathbf u_m\cdot\mathcal W_k\,\,dx +b\int_\Omega |\mathbf u_m|^{2\beta} \mathbf u_m\cdot\mathcal W_k\,dx = \int_\Omega f\cdot\mathcal W_k\,dx,\label{eqpart1}\\
\label{eqpart2}  &g_m^k(t=0)=\int {\mathbf u_0}\cdot \mathcal W_k\,dx,
\end{align}
for all $k=1,\ldots,m$. Since the vector field in system \eqref{eqpart0},\eqref{eqpart2} is locally Lipschitz, the system admits a unique solution $g_m^k(t)\in C^1([0,T_m])$, for some small interval $[0,T_m]\subset [0,T]$.
\vskip6pt
\noindent The solutions of  system \eqref{oursystem},\eqref{pbc} are obtained as the limit of a subsequence $u_{m_j}$  (as $j\to \infty$) of the  sequence of solutions $u_m$ of the Faedo-Galerkin system \eqref{eqpart0},\eqref{eqpart2}. Specifically, this will be achieved   by the means of the Aubin compactness Theorem, and the corresponding bounds are reached by Banach-Alaoglu Theorem. Also, let us mention that for clarity of the presentation, we will not introduce explicitly the Leray projector and the Stokes operator in (\ref{oursystem}-\ref{pbc}), which is usually used in the analysis of the Navier-Stokes equation to get rid of the pressure. We will also omit the exercise of passing  to the limit to the reader in (\ref{eqpart0}-\ref{eqpart2}) using the{\it a priori} bounds.
\vskip6pt
\subsection{Energy Estimates:}
\noindent In this paragraph, we show that $u(t)$ is uniformly bounded in $L^2(\Omega)$ with respect to time, $\nabla u \in L^2(0,T; L^2(\Omega))$ and $u \in L^{{2\alpha+2}}(0,T; L^2(\Omega))$. Let us mention that since the average of $u$ is not zero, we cannot rely on Poincar\'e inequality, as it is usually used  in the theory of Navier-Stokes. Instead, we rely strongly on the damping nonlinear term as it will be clear in the sequel. For this purpose, we multiply the first equation of (\ref{oursystem}) by $u$ and integrate over $\Omega$. Integrating by parts, we obtain
\begin{align}\label{dash1}
\frac12\,\frac{d}{dt}\, \nr u \nr^2_2+\nu\,\nr \nabla u\nr^2_2 +a\,\nr u\nr^{2\alpha+2}_{2\alpha+2}+b\,\nr u\nr^{2 \beta+2}_{2\beta+2} &= \s\,f(t,x)\,\cdot\,u(t,x)\,dx.
\end{align}
Since $0<\beta<\alpha$, we use H\"older and Young inequalities to get
\begin{align}
\label{new0} \nr u\nr_2^2 &\leq \epsilon\,\nr u\nr_{{2\alpha+2}}^{2\alpha+2} +  \left[\frac1\epsilon\right]^\frac1\alpha\,|\Omega|,\\
\label{relationalphabeta}\nr u\nr_{{2\beta+2}}^{2\beta+2} &\leq \epsilon\,\nr u\nr_{{2\alpha+2}}^{2\alpha+2} + \left[\frac1\epsilon\right]^{{\frac{\beta+1}{\alpha-\beta}}}\,|\Omega|,\\
\label{new} \s f\cdot u \,dx &\leq  \epsilon\,\nr u\nr^{2\alpha+2}_{2\alpha+2} + \left[\frac1\epsilon\right]^{\frac1{2\alpha+1}}\,\nr f\nr^{1+\frac1{2\alpha+1}}_{1+\frac1{2\alpha+1}} \nonumber\\
&\leq  \epsilon\,\nr u\nr^{2\alpha+2}_{2\alpha+2} + \,\nr f\nr^{2}_2 +  \left[\frac1{\epsilon}\right]^{\frac{1}{\alpha}}\,|\Omega|.
\end{align}
Plugging \eqref{relationalphabeta} and \eqref{new} in \eqref{dash1} and optimizing in $\epsilon$ leads to
\begin{align}\label{dash2}
&\frac{d}{dt}\nr u\nr^2_{\0} +2\nu\,\nr\nabla\,u\nr^2_{\0} +a\,\nr u\nr^{{2\alpha+2}}_{{{2\alpha+2}}}\leq 2\left(\nr f\nr^2_{\0} +\eta_0\right),
\end{align}
where we set
\[\eta_0:= \left\lbrace \left[\frac{2(1+\car)}{a}\right]^\frac1\alpha+ \car \,|b|\,\left[\frac{4|b|}{a}\right]^\frac{\beta+1}{\alpha-\beta}\right\rbrace\,|\Omega|.\]
Thanks to \eqref{new0}, we get (for a different choice of $\epsilon$)
\begin{align}\label{dash2new}
\frac{d}{dt}\nr u\nr^2_{\0} +2\nu\,\nr\nabla\,u\nr^2_{\0} &+\nu\,\nr u\nr^2_2\leq {2}\,\left(\nr f\nr^2_{\0}+{\eta_1}\right),
\end{align}
where
\[ \eta_1:= \eta_0 + \frac a2\,\left[ \frac\nu a\right]^{\frac{\alpha+1}{\alpha}}\,|\Omega|.\]
In particular,
\begin{align*}
\frac{d}{dt}\nr u\nr^2_{\0} +\nu\,\nr u\nr_2^2 \leq {2}\,\left(\nr f\nr^2_{\0}+{\eta_1}\right).
\end{align*}
Using Gronwall's inequality, we end up with the following uniform bound with respect to time
\begin{equation}\label{mass-bound-neg}
\nr u(t)\nr^2_{\0} \leq \nr u_0\nr^2_{\0}\,e^{-\nu\,t}+\frac{2}{\nu}\,\left(\nr f\nr^2_{\infty,2} + \eta_1\right).
\end{equation}
Now, integrating \eqref{dash2} with respect to time, we obtain
\begin{align}\label{gradient-bound00}
2\,\nu\,\st\nr\nabla\,u(s)\nr^2_{\0}\,ds &+a\,\st\,\nr u(s)\nr^{{2\alpha+2}}_{{{2\alpha+2}}(\Omega)}\,ds \leq  \nr u_0\nr^2_{\0}+{2}\,\left(\nr f\nr^2_{\infty,2} + \eta_0\right)\,t.
\end{align}
 Obviously, this inequality gives the required estimates for the existence of weak solutions for system \eqref{oursystem},\eqref{pbc}. Furthermore, \eqref{mass-bound-neg} shows that
 \[\limsup_{t\to +\infty}\,\nr u(t)\nr_2^2 \leq \frac{2}{\nu}\,\left(\nr f\nr^2_{\infty,2} + \eta_1\right).\]
\vskip6pt
\subsection{Gradient Estimate:}\label{gradientparagraph}
Next, we show that $\nabla u \in L^2(0,T; L^2(\Omega))$ and that $\Delta u \in L^2(0,T; L^2(\Omega))$. For this purpose, we multiply the first equation of (\ref{oursystem}) by $-\Delta u$ and integrate over $\Omega$ to get
\begin{align}\label{energy-est1}
\nonumber\frac12\,\frac{d}{dt}\,\nr \nabla u\nr^2_2 +\nu\,\nr \Delta u\nr^2_2+\s\,(u\cdot\nabla)\,u\,\cdot\,(-\Delta\,u)\,dx +a\,(1+2\alpha) \,\nr |u|^\alpha\,\nabla u\nr^2_2\\ +b\,(1+2\beta) \,\nr |u|^\beta\,\nabla u\nr^2_2 \leq \frac1\nu\,\nr f\nr^2_{\infty,2} + \frac\nu4\,\nr\Delta u\nr_2^2.
\end{align}
To obtain this inequality, we used the fact that $\s\,|u|^{2\alpha}\,u\,\cdot\,(-\Delta\,u)\,dx
=(1+2\alpha)\,\nr |u|^\alpha\,\nabla u\nr^2_2$. Now, using Cauchy-Schwarz, H\"older and Young inequalities and assuming $\alpha>1$, we can write
\begin{align}
&\left|\s\,(u\cdot\nabla)\,u\,\cdot\,(-\Delta\,u)\,dx \right| \leq \s\,|u|\,|\nabla\,u|^\frac1\alpha\,|\nabla\,u|^{1-\frac1\alpha}\,|\Delta\,u|\,dx \nonumber\\
&\hskip30pt\leq \nr |u|\,|\nabla\,u|^\frac1\alpha\nr_{L^{2\alpha}(\Omega)}\,\nr |\nabla\,u|^{1-\frac1\alpha}|\nr_{L^\frac{2\alpha}{\alpha-1}(\Omega)}\,\nr\Delta\,u\nr_{\0}\nonumber\\
&\hskip30pt\leq \frac1{4\epsilon_0}\, \nr|u|^{\alpha}\,|\nabla\,u|\nr^{\frac2\alpha}_{\0}\,\nr\nabla\,u\nr^{2(1-\frac1\alpha)}_{\0}  +\epsilon_0\nr\Delta\,u\nr^2_{\0}\nonumber\\
&\hskip30pt\leq \frac\epsilon{4\epsilon_0\,}\, \nr|u|^{\alpha}\,\nabla\,u\nr^{2}_{\0} + \frac{\epsilon^{\frac1{1-\alpha}}}{4\epsilon_0} \,\nr\nabla\,u\nr^{2}_{\0}  +\epsilon_0\,\nr\Delta\,u\nr^2_{\0}.\label{cumbersome}
\end{align}
In the sequel, we will need as well the following estimate
\begin{align}
\nr |u|^\beta\,\nabla u\nr_2^2 &\leq \nr |u|^\alpha\,\nabla u\nr_2^{\frac{2\beta}{\alpha}} \, \nr \nabla u\nr_2^{2\frac{\alpha-\beta}{\alpha}} \leq \epsilon \,\nr |u|^\alpha\,\nabla u\nr_2^2 + \left[ \frac1\epsilon \right]^{\frac\beta{\alpha-\beta}}\,\nr \nabla u\nr_2^2.\label{relationbad}
\end{align}
Gathering both estimates, we get
\begin{align}
\frac{d}{dt}\,\nr \nabla\,u\nr^2_{\0} &+\nu \nr \Delta\,u\nr^2_{\0} +  a\,(1+2\alpha)  \,\nr |u|^\alpha\,|\nabla\,u|\nr^2_{\0} \leq \frac2{\nu}\,\,\nr f\nr^2_{\0} +\mathcal A_1\,\nr\nabla\,u\nr^2_{\0},\label{touse0}
\end{align}
where we set
\[\mathcal A_1:= 2\left\lbrace \left[\frac{\nu^\alpha\,a\,(1+2\alpha)}{2(1+\car)}\right]^{\frac1{1-\alpha}} + \car\left[\frac{4^\beta[|b|(1+2\beta)]^\alpha}{[a(1+2\alpha)]^\beta}\right]^{\frac1{\alpha-\beta}} \right\rbrace.\]
In particular, we have
\begin{align}
\frac{d}{dt}\,\nr \nabla\,u\nr^2_{\0} &\leq \frac2{\nu}\,\,\nr f\nr^2_{\0} +\mathcal A_1\,\nr\nabla\,u\nr^2_{\0}.\label{tointegjusthere}
\end{align}
Integrating  \eqref{tointegjusthere} with respect to time, and using estimate (\ref{gradient-bound00}) leads to
\begin{align}\nr \nabla\,u(t)\nr^2_{\0}&\leq \frac1\nu\,\left[\left(2 + {\mathcal A_1}\right)\,\nr f\nr^2_{\infty,2} +{\mathcal A_1\,\eta_0} \right]\,t  + \frac{\mathcal A_1}{2\nu}\nr u_0\nr^2_{\0}+\nr \nabla\,u_0\nr^2_{\0}.
\label{boundtoimprove}
\end{align}
for all $t\in[0,T]$. Now, we integrate \eqref{touse0} with respect to time, and obtain
\begin{align}
\nu\,\int_0^t\,\nr \Delta\,u(s)\nr_\0^2\,ds &+ a\,(1+2\alpha)\,\int_0^t\,\nr |u(s)|^\alpha\,\nabla\,u(s)\nr_\0^2\,ds  \nonumber\\ &\leq  \frac1\nu\,\left[\left(2 + {\mathcal A_1}\right)\,\nr f\nr^2_{\infty,2} +{\mathcal A_1\,\eta_0} \right]\,t  + \frac{\mathcal A_1}{2\nu}\nr u_0\nr^2_{\0}+\nr \nabla\,u_0\nr^2_{\0}.
\label{toconcludepos}
\end{align}
\vskip6pt
\noindent In summary, \eqref{mass-bound-neg}, and \eqref{toconcludepos}) show that $u\in L^2_{\rm loc}(\mathbb R^+; H^2(\Omega))$. In particular, we conclude that $\Delta u\in L^2_{\rm loc}(\mathbb R^+; L^2(\Omega))$ and $u\in L^2_{\rm loc}(\mathbb R^+; L^\infty(\Omega))$, thanks to Sobolev inequality. Before we show the continuous dependence on the initial data, the uniqueness, and the fact that $\partial _t u\in L^2_{\rm loc}(\mathbb R^+; L^2(\Omega))$, we first improve the bound \eqref{toconcludepos}.
\vskip6pt
\subsection{Uniform Gradient Estimate:}
Next, we show that $\nabla u$ belongs to $L^\infty(\mathbb R^+; L^2(\Omega))$, that is $\nr \nabla u\nr_2$ is uniformly bounded with respect to time. More precisely, we claim
\begin{prop}\label{hereisthebound}
Let $f\in L^\infty(\mathbb R^+;{\bf H})$ and $u_0\in {\bf V}$. Assume $\alpha>1, 0\leq\beta<\alpha , a>0$, and $b\in\mathbb{R}$. Then, the solutions of (\ref{oursystem}-\ref{pbc}) satisfy
\begin{equation*}
\nr \nabla u(t)\nr_2 \leq K:= \left\lbrace
\begin{array}{lcl}
K_1& {\rm if} & 0\leq t\leq 1,\\
K_2& {\rm if} & t>1 ,
\end{array}
\right.
\end{equation*}
where $K_1$ and $K_2$ are given in \eqref{K1} and \eqref{K2}, below, and $\mathcal A_1$ is  defined in Theorem \ref{thm}.
\end{prop}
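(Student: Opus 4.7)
My plan is to split the time axis into $[0,1]$ and $(1,\infty)$, applying a straightforward Gronwall argument on the first piece and the \emph{uniform} Gronwall lemma on the second, with both estimates feeding off the differential inequality \eqref{tointegjusthere} and the uniform-in-time $L^{2}$ bound \eqref{mass-bound-neg}.

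On the interval $[0,1]$, I would apply Gronwall's inequality directly to \eqref{tointegjusthere}, rewritten as
\begin{align*}
\frac{d}{dt}\nr\nabla u\nr_2^{2}\;\leq\;\mathcal A_{1}\,\nr\nabla u\nr_2^{2}+\tfrac{2}{\nu}\,\nr f\nr_{\infty,2}^{2}.
\end{align*}
Multiplying by the integrating factor $e^{-\mathcal A_{1} t}$ and integrating between $0$ and $t\in[0,1]$ gives
\begin{align*}
\nr\nabla u(t)\nr_2^{2}\;\leq\;\Bigl(\nr\nabla u_{0}\nr_2^{2}+\tfrac{2}{\nu\,\mathcal A_{1}}\,\nr f\nr_{\infty,2}^{2}\Bigr)\,e^{\mathcal A_{1}}\;=:\;K_{1}^{2}.
\end{align*}

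For $t>1$ this short-time bound blows up exponentially in $t$, so I would invoke the uniform Gronwall lemma (cf.\ Lemma III.1.1.3 in Temam's book). Taking $y=\nr\nabla u\nr_2^{2}$, $g\equiv\mathcal A_{1}$ and $h=\tfrac{2}{\nu}\nr f\nr_2^{2}$, the only missing ingredient is a uniform bound on $\int_{t}^{t+1}\nr\nabla u(s)\nr_2^{2}\,ds$. I would obtain this by integrating the basic energy inequality \eqref{dash2} over $[t,t+1]$ and using \eqref{mass-bound-neg} to control $\nr u(t)\nr_{2}^{2}$: this yields
\begin{align*}
2\nu\int_{t}^{t+1}\nr\nabla u(s)\nr_2^{2}\,ds\;\leq\;\nr u(t)\nr_2^{2}+2\bigl(\nr f\nr_{\infty,2}^{2}+\eta_{0}\bigr)\;\leq\;M_{0},
\end{align*}
where $M_{0}$ is an explicit constant depending only on $\nu$, $\nr u_{0}\nr_{2}$, $\nr f\nr_{\infty,2}$, $\eta_{0}$ and $\eta_{1}$. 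Applying the uniform Gronwall lemma with window $r=1$ then produces
\begin{align*}
\nr\nabla u(t)\nr_2^{2}\;\leq\;\Bigl(\tfrac{M_{0}}{2\nu}+\tfrac{2}{\nu}\nr f\nr_{\infty,2}^{2}\Bigr)\,e^{\mathcal A_{1}}\;=:\;K_{2}^{2},\qquad t\geq 1,
\end{align*}
and setting $K=\max\{K_{1},K_{2}\}$ finishes the proof.

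There is no genuine obstacle here. The only point worth flagging is that the constant coefficient $\mathcal A_{1}$ in \eqref{tointegjusthere} rules out a clean globally decaying Gronwall estimate and forces the appeal to the uniform Gronwall lemma once the trajectory has entered the $L^{2}$ absorbing ball. All the delicate nonlinear bookkeeping, namely the damping $|u|^{2\alpha}u$ dominating the possibly destabilizing $|u|^{2\beta}u$, has already been packaged into $\mathcal A_{1}$ in the derivation of \eqref{touse0} via the interpolation \eqref{relationbad}, so no new algebra concerning those terms needs to be redone at this stage.
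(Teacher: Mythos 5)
Your proposal is correct and follows essentially the same route as the paper: split time at $t=1$, use the differential inequality \eqref{tointegjusthere} together with the time-integrated energy bound coming from \eqref{dash2} and \eqref{mass-bound-neg}, and conclude by a uniform-Gronwall argument on unit windows. The only substantive difference is quantitative: the paper carries out the uniform-Gronwall step by hand (averaging over $s\in[t-1,t]$) and, on $[0,1]$, uses the linear-in-$t$ bound \eqref{boundtoimprove} built from \eqref{gradient-bound00}, so its constants \eqref{K1} and \eqref{K2} depend only polynomially on $\mathcal A_1$, whereas your $K_1$ and $K_2$ carry an extra factor $e^{\mathcal A_1}$ and therefore do not literally coincide with \eqref{K1}--\eqref{K2}; since $K$ later feeds into the lower bounds on $\mu$ in Theorems \ref{thmconI1} and \ref{thmu2}, your version yields a valid but more stringent condition on the nudging parameter.
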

\begin{proof}
 We start with the bound \eqref{boundtoimprove}. On the one hand, for all $t\in[0,1]$, we have
\begin{align}
\nr \nabla\,u(t)\nr^2_{\0} \leq \frac1\nu\,\left[\left(2 + {\mathcal A_1}\right)\,\nr f\nr^2_{\infty,2} +{\mathcal A_1\,\eta_0} \right]  + \frac{\mathcal A_1}{2\nu}\nr u_0\nr^2_{\0}+\nr \nabla\,u_0\nr^2_{\0}.\label{est-part1}
\end{align}
On the other hand, for all $t> 1$, we consider $s \in[t-1,t]$, and integrate (\ref{tointegjusthere}) over $[s,t]$ to get
\begin{align}
\nr \nabla\,u(t)\nr^2_{\0} &\leq \nr \nabla\,u(s)\nr^2_{\0} +\frac{2}{\nu}\,\nr f\nr^2_{\infty,2}\,(t-s) + {\mathcal A}_1\,\int_s^t\nr\nabla\,u(\tau)\nr^2_{\0}\,d\tau\nonumber\\
&\leq  \nr \nabla\,u(s)\nr^2_{\0} +\frac{2}{\nu}\,\nr f\nr^2_{\infty,2} +\mathcal A_1\,\int_{t-1}^t\nr\nabla\,u(\tau)\nr^2_{\0}\,d\tau.\label{newdash1}
\end{align}
Notice that integrating (\ref{dash2}) over $[t-1,t]$, and using (\ref{mass-bound-neg}) we obtain
\begin{align}
&\nr u(t)\nr^2_{\0} +2\,\nu \int_{t-1}^t\nr\nabla\,u(s)\nr^2_{\0}\,ds \leq {2}\,\left(\nr f\nr^2_{\infty,2} +\eta_0\right)+ \nr u(t-1)\nr^2_{\0} \nonumber\\
&\hskip70pt\leq {2}\,\left\lbrace \left(1+\frac1\nu\right)\nr f\nr^2_{\infty,2} +\eta_0+\frac{\eta_1}{\nu}\right\rbrace+e^{-\nu\,(t-1)}\, \nr u_0\nr^2_{\0}.\label{e1}
\end{align}
Plugging (\ref{e1}) into \eqref{newdash1}, we get for all $s\in[t-1,t]$ such that $t> 1$,
\begin{align*}
\nr \nabla\,u(t)\nr^2_{\0} &\leq  \nr \nabla\,u(s)\nr^2_{\0} + \frac{\mathcal A_1}{2\nu}\,e^{-\nu(t-1)} \,\nr u_0\nr^2_{\0} \\
&+\frac{1}{\nu}\, \left\lbrace \left( 2+{\mathcal A_1}\left[1+\frac1\nu\right]\right)\,\nr f\nr^2_{\infty,2}+ {\mathcal A_1}\,\left[\eta_0+\frac{\eta_1}{\nu}\right]  \right\rbrace.
\end{align*}
Now, we integrate this inequality with respect to $s$ over $[t-1,t]$ and obtain
\begin{align}
 \nonumber\nr &\nabla\,u(t)\nr^2_{\0} \leq \int_{t-1}^t \nr \nabla\,u(s)\nr^2_{\0}\,ds +\frac{\mathcal A_1}{2\nu}\,e^{-\nu(t-1)} \,\nr u_0\nr^2_{\0}\\ &\hskip25pt+\frac{1}{\nu}\, \left\lbrace \left( 2+{\mathcal A_1}\left[1+\frac1\nu\right]\right)\,\nr f\nr^2_{\infty,2}+ {\mathcal A_1}\,\left[\eta_0+\frac{\eta_1}{\nu}\right]  \right\rbrace\nonumber\\
&\hskip25pt\leq \frac1{2\nu}\,\left(1+\mathcal A_1\right)\,e^{-\nu(t-1)}\,\nr u_0\nr_2^2 \nonumber \\
&\hskip25pt+\frac{1}{\nu}\, \left\lbrace \left( 2+(\mathcal A_1+1)\,\left[1+\frac1\nu\right]\right)\,\nr f\nr^2_{\infty,2}+ (\mathcal A_1+1)\,\left[\eta_0+\frac{\eta_1}{\nu}\right]  \right\rbrace.\label{e2}
\end{align}
Eventually, \eqref{est-part1} and \eqref{e2} show that
\[
\nr \nabla\,u(t)\nr_{\0} \leq K_1, \quad {\hbox {for all} } \quad 0 \leq t\leq 1,
\]
with
\begin{equation}\label{K1}K_1:=\left[  \frac1\nu\,\left[\left(2 + {\mathcal A_1}\right)\,\nr f\nr^2_{\infty,2} +{\mathcal A_1\,\eta_0} \right]  + \frac{\mathcal A_1}{2\nu}\nr u_0\nr^2_{\0}+\nr \nabla\,u_0\nr^2_{\0}\right]^\frac12,\end{equation}
and
\begin{equation*}
\nr \nabla\,u(t)\nr_{\0} \leq K_2, \quad {\hbox {for all} }\quad t>1,
\end{equation*}
with
\begin{align}K_2&:= \left[ \frac{1}{\nu}\, \left\lbrace \left( 2+(\mathcal A_1+1)\,\left[1+\frac1\nu\right]\right)\,\nr f\nr^2_{\infty,2}+ (\mathcal A_1+1)\,\left[\eta_0+\frac{\eta_1}{\nu}\right]  \right\rbrace\right.\nonumber\\
&\hskip205pt\left. +\frac1{2\nu}\,\left(1+\mathcal A_1\right)\,\nr u_0\nr_2^2\right]^\frac12.\label{K2}
\end{align}
\end{proof}
\noindent It is worth noticing that this uniform bound  is essential for establishing the existence of a nonempty compact global attractor of system \eqref{oursystem},\eqref{pbc} when $f$ is independent of time, and we refer to \cite{Varga}. As we will see below, this bound will be crucial in showing the convergence of the solution of the data assimilation algorithm to the solution of (\ref{oursystem}-\ref{pbc}). Eventually, observe that using \eqref{e2}, we obtain clearly the $\limsup$ bound of Theorem \ref{thm}.
\vskip6pt
\noindent Now, integrating (\ref{touse0}), leads to the following inequality for all $t\geq 0$
\begin{align}
\nonumber\nu \,\int_t^{t+1}\nr \Delta\,u(s)\nr^2_{\0}\,ds &+ {a\,(1+2\alpha)}\,\int_t^{t+1}\nr |u(s)|^\alpha\,|\nabla\,u(s)|\nr^2_{L^2}\,ds \\ &\leq\frac2{\nu}\,\nr f\nr^2_{\infty,2} + (\mathcal A_1+1) \,{K}^2.\label{mainone}
\end{align}
\vskip6pt
\noindent Now, we show that if the initial data $ u_0 \in {\bf V} \cap L^{2\alpha+2}(\Omega)$, then we have $u\in L^{\infty}_{\rm loc}(\mathbb R^+;L^{{2\alpha+2}}(\Omega))$ and $\partial _t u\in L^2_{\rm loc}(\mathbb R^+; L^2(\Omega))$. For this purpose, we multiply the first equation of (\ref{oursystem}) by $\partial_tu$ and integrate over $\Omega$ to get
\begin{align}
\nonumber\s\,|\partial_t u|^2\,dx + \s\,\partial_t u\,u\cdot \nabla u\,dx &+ \frac\nu2\,\frac{d}{dt}\s\,|\nabla u|^2\,dx +\frac{a}{2\alpha+2}\,\frac{d}{dt}\,\s|u^{2\alpha+2}\,dx \\ &+ \frac{b}{2\beta+2}\,\frac{d}{dt}\,\s|u^{2\beta+2}\,dx = \s f\,\partial_tu\,dx \label{multbydtu}
\end{align}
In particular, thanks to Cauchy-Schwarz and Young's inequalities, we can write
\begin{align}\label{themainfortimederivative}
\frac12\,\s\,|\partial_t u|^2\,dx + \s\,\partial_t u\,u\cdot \nabla u\,dx &+ \frac\nu2\,\frac{d}{dt}\s\,|\nabla u|^2\,dx +\frac{a}{2\alpha+2}\,\frac{d}{dt}\,\s|u^{2\alpha+2}\,dx \nonumber\\ &+ \frac{b}{2\beta+2}\,\frac{d}{dt}\,\s|u^{2\beta+2}\,dx \leq \nr f\nr_2^2
\end{align}
Furthermore, we have
\begin{align}\label{toseeitlater}
 &\s\,\partial_t u\,u\cdot \nabla u\,dx \leq \nr \partial_t u\nr_2\,\nr u\nr_\infty\,\nr \nabla u\nr_2 \leq \frac14\nr \partial_t u\nr^2_2 + K^2\nr u\nr^2_\infty\nonumber\\ &\hskip25pt\leq \frac14\nr \partial_t u\nr^2_2 + \left(\frac{\kappa_4^2K^4}{4\nu} +\nu\right)\,\nr u\nr^2_{H^1} + \nu\,\nr \Delta u\nr_2^2\nonumber \\
 &\hskip25pt\leq \frac14\nr \partial_t u\nr^2_2 + \left(\frac{\kappa_4^2K^4}{4\nu} +\nu\right) \left(\nr u_0\nr^2_{\0}+\frac{2}{\nu}\,\left(\nr f\nr^2_{\infty,2} + \eta_1\right) +K^2\right)+ \nu\,\nr \Delta u\nr_2^2\nonumber\\
 &\hskip25pt:= \frac14\nr \partial_t u\nr^2_2 + \nu\,\nr \Delta u\nr_2^2 +\rho_2
\end{align}
Thus, we can write now
\begin{align*}
 \frac14\nr \partial_t u\nr^2_2 + \frac\nu2\,\frac{d}{dt}\,\nr \nabla u\nr_2^2 + \frac{a}{2\alpha+2}\,\frac{d}{dt}\,\nr u\nr_{2\alpha+2}^{2\alpha+2}+ \frac{b}{2\beta+2}\,\frac{d}{dt}\,\nr u\nr_{2\beta+2}^{2\beta+2} \\ \leq   \nu\,\nr \Delta u\nr_2^2 + \nr f\nr_2^2+\rho_2.
\end{align*}
Integrating this inequality with respect to time and using \eqref{relationalphabeta} (if $b<0$) and \eqref{toconcludepos}, we obtain clearly
\[ u\in L^{\infty}_{\rm loc}(\mathbb R^+;L^{{2\alpha+2}}(\Omega)),\quad \partial _t u\in L^2_{\rm loc}(\mathbb R^+; L^2(\Omega)),\quad \text{for all}\quad u_0\in {\bf V}\cap L^{2\alpha+2}(\Omega).\]
\vskip6pt
\noindent Now we turn to the proof of the continuous dependence on initial data and the uniqueness.
\begin{remark} \label{Remreg}
As mentioned above, most of the calculation we presented are formal, but can be done rigorously by performing them at the Faedo-Galerkin level and passing to the limit. Nevertheless, the action of the equation on $u$ makes sense, in particular since we have clearly $\partial_t u \in L^2_{\rm loc} (\mathbb R^+; H^{-1}(\Omega))$ and $u \in L^2 (\mathbb R^+; H^{1}(\Omega))$ so that the function $t\mapsto \nr u(t)\nr_2^2$ is absolutely continuos and the action  $\left\langle  \partial_tu,u\right\rangle =\frac12\frac d{dt}\,\nr u(t)\nr_2^2$(see e.g. \cite{Temam,Temam1}). This operation is then rigorously defined when performed on the continuous equation. However, when we multiply by $-\Delta u$, the action $\left\langle  \partial_tu,-\Delta u\right\rangle $ is no longer equal to $\frac12\frac d{dt}\,\nr \nabla u(t)\nr_2^2$, except for initial data in $L^{2\alpha+2}(\Omega)$ where we show that $\partial _t u$ and $-\Delta u $ are both in $ L^2_{\rm loc}(\mathbb R^+; L^2(\Omega))$. Therefore, the multiplication by $-\Delta u$ and the integration over $\Omega$ has to be done either on the Faedo-Galerkin level, or using a regularization procedure like convolution with a mollifier, and  pass to the limit. Eventually, when the initial data is in ${\bf V} \cap L^{2\alpha+2(\Omega)}$, then $\partial_t u \in L^2_{\rm loc}(\mathbb R^+;L^2(\Omega))$ so that the multiplication of the equation by $\partial_t u $ makes sense. This remark holds true for all the systems we consider below.
\end{remark}

\subsection{Continuous dependence on initial data and uniqueness of strong solutions}
Let $u$ and $v$ be two solutions of system \eqref{oursystem},\eqref{pbc}, corresponding to initial data $u_0$ and $v_0$, respectively. Let $w=u-v$, then an easy calculation shows that $w$ satisfies
\begin{align*}
\partial_t\,w-\nu\,\Delta\,w +(w\cdot\nabla)\,u +(v\cdot\nabla)\,w +\nabla\,(p_u-p_v)+a\,\left(|u|^{2\alpha}u-|v|^{2\alpha}v\right)\\ +b\,\left(|u|^{2\beta}u-|v|^{2\beta}v\right)=0.
\end{align*}
The action of this equation on $w$ leads to \begin{align*}
\frac12\frac d{dt}\,\nr w (t)\nr^2_{\0} +\s [(w\cdot\nabla)\,u]\cdot w + \nu\,\nr \nabla\,w\nr^2_{\0} +a\,\s\left(|u|^{2\alpha}u-|v|^{2\alpha}v\right)\cdot w\,dx \\ +b\,\s\left(|u|^{2\beta}u-|v|^{2\beta}v\right)\cdot w\,dx=0.
\end{align*}
It is well known (see, {\it e.g.}, \cite{Barret}) that there exists a nonnegative constant $\kappa_0=\kappa_0(\alpha)$ such that
\begin{equation}
0\leq\kappa_0\,|u-v|^{2}\,\left(|u|+|v|\right)^{2\alpha}\leq \left(|u|^{2\alpha}u-|v|^{2\alpha}v\right)\cdot (u-v)
\label{positivity}.\end{equation}
Moreover, we use the following classical inequality
\begin{align}\label{classpower}
\left(|u|^{2\beta}u-|v|^{2\beta}v\right) \leq \tilde\kappa_0\,\left(|u|+|v|\right)^{2\beta} |w|.
\end{align}
Now, using H\"older, Young and  Lady\v{z}henskaya inequalities, we can write
\begin{align}
\nonumber\s [(w\cdot\nabla)\,u]\cdot w &\leq  \nr \nabla\,u\nr_{\0}\nr w\nr^2_{L^4(\Omega)} \\ & \leq \,\kappa^2_1\,\nr \nabla\,u\nr_{\0}\nr w\nr^\frac12_{\0}\,\nr w\nr^{\frac32}_{H^1} \nonumber\\ &\leq \epsilon \,\nr \nabla w\nr_2^2 + \left( \epsilon + \frac{\kappa^8_1}{\epsilon^3}\,\nr \nabla u\nr_2^4\right)\, \nr w\nr^2_{\0}.
\end{align}
Next, we use H\"older and Young inequalities to get
\begin{align}\label{diff}
\s\left(|u|^{2\beta}u-|v|^{2\beta}v\right)\cdot w\,dx& \leq \tilde\kappa_0\,\s\left(|u|+|v|\right)^{2\beta} \,|w|^{2\frac\beta\alpha}\,|w|^{2\left(1-\frac\beta\alpha\right)}dx\nonumber\\
&\leq \tilde\kappa_0\,\nr\left(|u|+|v|\right)^{\alpha} |w|\nr^{2\frac\beta\alpha}_{L^2(\Omega)}\, \nr w\nr^{2\frac{\alpha-\beta}{\alpha}}_{L^2(\Omega)}\nonumber\\
&\leq \epsilon\,\tilde\kappa_0\,\nr \left(|u|+|v|\right)^{\alpha} |w|\nr^2_{\0} +  \tilde\kappa_0\,\left[\frac1{\epsilon}\right]^{\frac\beta{\alpha-\beta}}\,\nr w\nr^2_{\0}.
\end{align}
Now, we use (\ref{positivity}--\ref{diff}), optimize in $\epsilon$, and obtain 
\begin{align*}
\frac d{dt}\,\nr w (t)\nr^2_{\0} +\nu\,\nr \nabla w\nr_2^2&+ (2-\car)\,{a\,\kappa_0},\nr \left(|u|+|v|\right)^{\alpha} |w|\nr^2_{\0}\\
& \leq \left\lbrace \nu+\frac{2^4\kappa_1^8}{\nu^3}K^4 +\car \left[\frac{(2\tilde\kappa_0\,|b|)^\alpha}{(a\,\kappa_0)^\beta}\right]^{\frac1{\alpha-\beta}}\right\rbrace\,\nr w(t)\nr^2_{\0},
\end{align*}
with $K$ as in Proposition \ref{hereisthebound}. In particular, it holds
\begin{align*}\frac d{dt}\,\nr w (t)\nr^2_{\0} &\leq  \left\lbrace \nu+\frac{2^4\kappa_1^8}{\nu^3}K^4 +\,\car \left[\frac{(2\tilde\kappa_0\,|b|)^\alpha}{(a\,\kappa_0)^\beta}\right]^{\frac1{\alpha-\beta}} \right\rbrace\,\nr w(t)\nr^2_{\0}.\end{align*}
Using Gronwall's inequality, we get
\[ \nr w (t)\nr^2_{\0} \leq \nr w(t=0)\nr^2_{\0}\,e^{ \left\lbrace \nu+\frac{2^4\kappa_1^8}{\nu^3}K^4 +\,\car \left[\frac{(2\tilde\kappa_0\,|b|)^\alpha}{(a\,\kappa_0)^\beta}\right]^{\frac1{\alpha-\beta}} \right\rbrace\,t},\]
which obviously leads to the desired result.
\begin{remark}\label{rem1}
In order to show the continuous dependence on the initial data and the uniqueness, we used the uniform bound given by Proposition \ref{hereisthebound}. Nevertheless, this fact can be shown without recourse to this bound by using the local integrability (in time) of $\nr \nabla u(t)\nr_2^2$ given by \eqref{mass-bound-neg} and the estimate \eqref{boundtoimprove}. The proof we presented using the uniform bound on  $\nr \nabla u(t)\nr_2^2 $ is simpler.
 \end{remark}
\begin{remark}
Observe that if $u_0,v_0 \in {\bf V} \cap L^{2\alpha+2}(\Omega)$, then $u,v \in L^2_{\rm loc}(\mathbb R^+; L^2(\Omega))$ so that $\partial_t w\in L^2_{\rm loc}(\mathbb R^+; L^2(\Omega))$ and the multiplication of the equation of $\partial_t w$ by $w$ makes sense rigorously. However, if the initial data $u_0$ and $v_0$ are only in ${\bf V}$, then $\partial_t w\in L^2_{\rm loc}(\mathbb R^+; H^{-1}(\Omega))$ and we have to consider the action of $\partial_t w$ on $w \in L^2_{\rm loc}(\mathbb R^+;H^1(\Omega))$.
\end{remark}
\section{Proof of Theorem \ref{blowupsol} }\label{blow}
\noindent In this section, we prove Theorem \ref{blowupsol} by seeking for a special solution of system \eqref{oursystem},\eqref{bc} of the form $u(x,y,z,t)=(\phi(z,t),0,0)$, with $\phi(0,t)=\phi(L,t)=0$, and $p\equiv 0$. Plugging these requirements in equation \eqref{oursystem}, we find that  $\phi(z,t)$ must satisfy
\begin{equation}\label{eqz}
\left\lbrace
\begin{array}{ll}
&\partial_t\,\phi-\nu\partial_z^2\,\phi + a|\phi|^{2\alpha}\phi+b|\phi|^{2\beta}\phi=0,\\
&\\
&\phi(0,t)=\phi(L,t)=0.
\end{array}
\right.
\end{equation}
For any smooth initial data, equation \eqref{eqz} has short time existence and uniqueness of smooth enough solutions. In the sequel, we will show that for  a special set of initial data $\phi(z,0)=\phi_0(z)$, the solution of equation \eqref{eqz} blows up in finite time. For this purpose, we need the following
\vskip6pt
\begin{lem}\label{lem}
Let $a,b\in\mathbb R$, $\phi_0(z)\geq 0$, be smooth enough initial data, and $\phi(z,t)$ be the corresponding solution of \eqref{eqz}. Let $[0,T)$ be the maximal interval of existence for $\phi(z,t)$. Then, $\phi(z,t)\geq 0$ for all $t\in[0,T)$.
\end{lem}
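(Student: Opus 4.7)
I would prove the lemma by a standard negative--part energy argument. Set $\phi^{-}(z,t):=\min\{\phi(z,t),0\}\le 0$. Since $\phi$ is smooth on the maximal interval of existence $[0,T)$, $\phi^{-}$ is Lipschitz in both variables, and the Stampacchia--type chain rule yields the pointwise (a.e.) identities $\phi\,\phi^{-}=(\phi^{-})^{2}$, $\partial_{t}\phi\cdot\phi^{-}=\tfrac12\partial_{t}(\phi^{-})^{2}$, $\partial_{z}\phi\cdot\partial_{z}\phi^{-}=(\partial_{z}\phi^{-})^{2}$, and $|\phi|^{2\gamma}\phi\cdot\phi^{-}=|\phi^{-}|^{2\gamma+2}$, each side vanishing on $\{\phi>0\}$. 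Moreover, the Dirichlet trace in \eqref{eqz} transmits to $\phi^{-}(0,t)=\phi^{-}(L,t)=0$.

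The first substantive step is to multiply \eqref{eqz} by $\phi^{-}$, integrate over $[0,L]$, and integrate the diffusion term by parts (the boundary contribution vanishes). Using the identities above with $\gamma=\alpha$ and $\gamma=\beta$, this produces the energy identity
\begin{equation*}
\tfrac{1}{2}\,\tfrac{d}{dt}\nr\phi^{-}(t)\nr_{2}^{2}+\nu\,\nr\partial_{z}\phi^{-}(t)\nr_{2}^{2}+a\,\nr\phi^{-}(t)\nr_{2\alpha+2}^{2\alpha+2}+b\,\nr\phi^{-}(t)\nr_{2\beta+2}^{2\beta+2}=0.
\end{equation*}
Since the signs of $a$ and $b$ are not prescribed, I would move the two polynomial terms to the right--hand side and dominate them crudely by the $L^{\infty}$ norm of $\phi$. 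Fix any $\tau<T$ and set $M_{\tau}:=\sup_{t\in[0,\tau]}\nr\phi(t)\nr_{L^{\infty}([0,L])}$, which is finite by the smoothness of $\phi$ on $[0,\tau]$. Dropping the nonnegative diffusion contribution yields
\begin{equation*}
\tfrac{d}{dt}\nr\phi^{-}(t)\nr_{2}^{2}\le 2\bigl(|a|M_{\tau}^{2\alpha}+|b|M_{\tau}^{2\beta}\bigr)\,\nr\phi^{-}(t)\nr_{2}^{2},\qquad t\in[0,\tau].
\end{equation*}

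Because $\phi_{0}\ge0$ gives $\phi^{-}(\cdot,0)\equiv 0$, Gronwall's inequality forces $\nr\phi^{-}(t)\nr_{2}=0$ for every $t\in[0,\tau]$, hence $\phi^{-}\equiv 0$ on $[0,\tau]\times[0,L]$; since $\tau<T$ was arbitrary one concludes $\phi\ge 0$ throughout $[0,T)\times[0,L]$, as claimed. The only slightly delicate point is the rigorous justification of the chain rule for $\phi^{-}$, which is routine given the smoothness of $\phi$ on the maximal interval; no sign restriction on $a$ or $b$ intervenes, which is precisely the flexibility needed to carry out the blow--up construction of Theorem \ref{blowupsol} where $a<0$ and $b\in\mathbb{R}$ is arbitrary.
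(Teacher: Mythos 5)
Your argument is correct and is essentially the paper's own proof: both multiply \eqref{eqz} by the (signed) negative part of $\phi$, integrate by parts, bound the $a$- and $b$-terms by the $L^\infty$ norm of $\phi^-$ (finite on compact subintervals of $[0,T)$ by smoothness), and conclude $\phi^-\equiv 0$ via Gronwall from $\phi^-(\cdot,0)\equiv 0$. The only differences are the sign convention for $\phi^-$ and your slightly more explicit restriction to $[0,\tau]$ with $\tau<T$, which is a cosmetic tightening of the same argument.
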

\vskip6pt
\begin{proof}
Let us denote by $\phi^-=\max\{0,-\phi\}$. We will show that $\phi^-\equiv 0$ for all $t\in [0,T)$. We multiply \eqref{eqz} by $\phi^-$ and integrate over $(0,L)$ and obtain
\begin{align*}
\frac12\frac{d}{dt}\,\int_0^L\,|\phi^-|^2\,dz +\nu\int_0^L\,\left|\partial_z \phi^-\right|^2\,dz +a\,\int_0^L\,|\phi^-|^{{2\alpha+2}}\,dz+b\,\int_0^L\,|\phi^-|^{{2\beta+2}}\,dz=0.
\end{align*}
This equality implies that for all $t\in[0,T)$
\[
\frac{d}{dt}\,\nr \phi^-\nr^2_{L^2(0,L)}\leq 2\left(|a|\,\nr \phi^-\nr^{2\alpha}_{L^\infty(0,L)}+|b|\,\nr \phi^-\nr^{2\beta}_{L^\infty(0,L)}\right)\nr \phi^-\nr^2_{L^2(0,L)}.
\]
Therefore, by Gronwall's inequality, we have
\[
\nr \phi^-(t)\nr_{L^2(0,L)} \leq e^{2 \int_0^t\left(|a|\,\nr \phi^-(\tau)\nr^{2\alpha}_{L^\infty(0,L)}+|b|\,\nr \phi^-(\tau)\nr^{2\beta}_{L^\infty(0,L)}\right)\,d\tau}\nr \phi^-(0)\nr^2_{L^2(0,L)}.
\]
Since $\nr \phi^-(0)\nr_{L^2(0,L)}=0$, and the exponential term is finite, we infer that $\nr \phi^-(t)\nr_{L^2(0,L)}=0$ for all $t\in[0,T)$. Consequently, $\phi(z,t)\geq 0$ for all $t\in[0,T)$.\hskip40pt
\end{proof}
\vskip6pt
\noindent Now, let $\phi_0(z)\geq 0$ be smooth enough to be chosen later. Let $T>0$ be the maximal time of existence of the solution $\phi$ of equation \eqref{eqz}, corresponding to the initial data $\phi_0$. Thanks to Lemma \ref{lem}, $\phi(z,t)\geq 0$, for all $t\in[0,T)$. We aim to show that $T<+\infty$. We proceed by contradiction and assume that $T=+\infty$. Then, we multiply equation \eqref{eqz} by $\sin \frac{\pi\,z}{L}\geq 0$, and integrate over $(0,L)$, to obtain, after integration by parts,
\begin{align}
&\frac{d}{dt}\,\int_0^L\,\phi(z,t)\,\sin \frac{\pi\,z}{L}\,dz +\nu\left(\frac\pi L\right)^2\,\int_0^L\,\phi(z,t)\,\sin \frac{\pi\,z}{L}\,dz \nonumber\\
&\hskip80pt+ a\,\int_0^L\,|\phi(z,t)|^{2\alpha}\,\sin \frac{\pi\,z}{L}\,dz + b\,\int_0^L\,|\phi(z,t)|^{2\beta}\,\sin \frac{\pi\,z}{L}\,dz=0.\label{trigestimate}
\end{align}
Now, we need the following basic fact
\vskip6pt
\begin{lem}\label{lemtrig}
Let $\psi\in L^1(0,L)$ satisfying $\psi(z)\geq 0$, a.e. on $[0,L]$; and let $\gamma\geq 1$. Then, there exists a constant $c_\gamma$ such that
\[\left(\int_0^L\,\psi(z) \,\sin \frac{\pi\,z}{L}\,dz\right)^\gamma \leq c_\gamma\,\int_0^L\,\psi(z)^{\gamma}\,\sin \frac{\pi\,z}{L}\,dz.\]
\end{lem}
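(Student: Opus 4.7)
\textbf{Proof plan for Lemma \ref{lemtrig}.} The key observation is that $d\mu(z) := \sin(\pi z/L)\,dz$ is a finite positive Borel measure on $[0,L]$ with total mass
\[
\mu([0,L]) = \int_0^L \sin \frac{\pi z}{L}\,dz = \frac{2L}{\pi}.
\]
Thus $d\tilde\mu := (\pi/(2L))\,\sin(\pi z/L)\,dz$ is a probability measure, and the inequality we want is exactly Jensen's inequality applied to the convex function $\varphi(x) = x^\gamma$ on $[0,\infty)$ with respect to $d\tilde\mu$. The hypothesis $\psi \geq 0$ a.e.\ is needed precisely so that $\psi$ takes values in the domain where $x \mapsto x^\gamma$ is defined and convex.

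Concretely, the plan is to give a direct proof via H\"older's inequality, which also yields the explicit constant. For $\gamma>1$, I would factor the integrand as
\[
\psi(z)\,\sin\frac{\pi z}{L} = \Bigl(\psi(z)^{\gamma}\,\sin\frac{\pi z}{L}\Bigr)^{1/\gamma}\,\Bigl(\sin\frac{\pi z}{L}\Bigr)^{(\gamma-1)/\gamma},
\]
which uses $\psi \geq 0$ so that the fractional power is well defined. Applying H\"older's inequality with conjugate exponents $\gamma$ and $\gamma/(\gamma-1)$ yields
\[
\int_0^L \psi(z)\,\sin\frac{\pi z}{L}\,dz \leq \left(\int_0^L \psi(z)^{\gamma}\,\sin\frac{\pi z}{L}\,dz\right)^{1/\gamma}\left(\int_0^L \sin\frac{\pi z}{L}\,dz\right)^{(\gamma-1)/\gamma}.
\]
Raising to the $\gamma$-th power gives the stated inequality with explicit constant $c_\gamma = (2L/\pi)^{\gamma-1}$. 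The case $\gamma=1$ is trivial with $c_1 = 1$.

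There is no real obstacle in this proof; the only subtlety is to check that both sides make sense when $\psi$ is only assumed integrable. If the right-hand side is infinite there is nothing to prove, and otherwise $\psi \in L^\gamma(d\mu)$ and the above H\"older argument is rigorous. The inequality should be recorded as sharp in the sense that equality holds for $\psi \equiv \mathrm{const}$, which is consistent with $c_\gamma = \mu([0,L])^{\gamma-1}$ obtained from Jensen's inequality.
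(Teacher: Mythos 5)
Your proposal is correct and is essentially identical to the paper's own proof: the same H\"older factorization $\psi\,\sin\frac{\pi z}{L}=\bigl(\psi^{\gamma}\sin\frac{\pi z}{L}\bigr)^{1/\gamma}\bigl(\sin\frac{\pi z}{L}\bigr)^{1-1/\gamma}$ with conjugate exponents $\gamma$ and $\gamma/(\gamma-1)$, yielding the same constant $c_\gamma=\left(\frac{2L}{\pi}\right)^{\gamma-1}$. The Jensen's-inequality remark and the integrability caveat are sensible additions but not needed beyond what the paper already does.
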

\begin{proof}
The proof is a straightforward calculation. Indeed, we have
\begin{align*}
&\int_0^L\,\psi(z) \,\sin \frac{\pi\,z}{L}\,dz=\int_0^L\,\psi(z) \, \left(\sin\frac{\pi\,z}{L}\right)^\frac1\gamma\,\left(\sin\frac{\pi\,z}{L}\right)^{1-\frac1\gamma}\,dz\\
&\hskip100pt\leq \left(\int_0^L\,\psi(z)^\gamma \,\sin \frac{\pi\,z}{L}\,dz\right)^\frac1\gamma \left(\int_0^L\,\sin \frac{\pi\,z}{L}\,dz\right)^{1-\frac{1}{\gamma}}\\
&\hskip100pt=\left(\frac{2L}{\pi}\right)^{1-\frac1\gamma}\,\left(\int_0^L\,\psi(z)^\gamma \,\sin \frac{\pi\,z}{L}\,dz\right)^\frac1\gamma.
\end{align*}
The proof is achieved by setting $c_\gamma=\left(\frac{2L}{\pi}\right)^{\gamma-1}$.\hskip165pt
\end{proof}
\vskip6pt
\noindent Next, we introduce the notation $m(t):=\int_0^L\,\phi(z,t)\,\sin \frac{\pi\,z}{L}\,dz \geq 0$. Since $\phi(z,t)\geq 0$, by applying Lemma \ref{lemtrig} to equation \eqref{trigestimate} and using the fact that $a<0$, we obtain
\[\frac{d}{dt}\,m(t)\geq -\nu\left(\frac\pi L\right)^{2}\,m(t) +\frac{|a|}{c_{2\alpha+1}}\,m(t)^{2\alpha+1}-\frac{|b|}{c_{2\beta+1}}\,m(t)^{2\beta+1}.\]
Since $\alpha>\beta\geq 0$,  there exists $m^\star>0$, large enough, such that for all $m\geq m^\star$ we have
\[-\nu\left(\frac\pi L\right)^{2}\,m(t)+\frac{|a|}{c_{2\alpha+1}}\,m(t)^{2\alpha+1}-\frac{|b|}{c_{2\beta+1}}\,m(t)^{2\beta+1}>\frac{|a|}{4\,c_{2\alpha+1}}\,m(t)^{2\alpha+1}.\]
Now, we choose $m(0)\geq 2m^\star$. Therefore, by the continuity of $m(t)$, one has for short positive time that $m(t)>m^\star$. From the above, we have  for short time
\begin{equation}\frac{d}{dt}\,m(t)\geq\frac{|a|}{4\,c_{2\alpha+1}}\,m(t)^{2\alpha+1}.\label{blow-eq-inter}\end{equation}
Thus, we are able to conclude that $m(t)>m^\star$ for all $t\geq0$ and as a result, \eqref{blow-eq-inter} also holds for all $t\geq0$. Since $\alpha>0$, then by integrating \eqref{blow-eq-inter} we obtain $m(t)\geq {m(0)}\,{(1-\alpha\frac{|a|}{2\,c_{2\alpha+1}}m(0)^{2\alpha}\,t )^{-\frac{1}{2\alpha}}}$.
Hence, there exists a finite time $T^\star$ such that $\limsup_{t\rightarrow T^\star_-}\,m(t)=+\infty$. This shows the finite time blow-up and concludes the proof of Theorem \ref{blowupsol}.

\section{Proof of Theorems \ref{thmu}}\label{Proofstrongmu}\vskip6pt \noindent 
The proof follows the same line of the proof of Theorems \ref{thmweak} and \ref{thm} with the extra difficulty that consists in dealing with the difference $\mathcal I^1_h(u)-\mathcal I^1_h(v)$. From this point onward, we will no longer split the proofs depending on the sign of $b$ and use the characteristic function $\mathds 1_{\{b<0\}}$ for shortness.

\subsection{Local-in-time solutions} \label{localintimesolution}
\vskip6pt
\noindent Let $u$ be  a strong solution of system \eqref{oursystem},\eqref{pbc}, our goal is to prove the existence and uniqueness of strong solutions to the following system
\begin{equation*}\mathcal S_1:\quad
\left\lbrace\begin{array}{ll}
&\partial_t\,v-\nu\,\Delta\,v +(v\cdot\nabla)\,v +\nabla\,q+a\,|v|^{2\alpha}\,v+b\,|v|^{2\beta}\,v=\varphi -\mu\,\mathcal I^1_h(v),\\ \\
& \nabla\cdot v=0,\;v\vert_{t=0}=v_0, \\\\
&v(x+L,y,z,t)=v(x,y+L,z,t)=v(x,y,z+L,t) = v(x,y,z,t), \\\\
&q(x+L,y,z,t)=q(x,y+L,z,t)=p(x,y,z+L,t) =q(x,y,z,t),
\end{array}
\right.
\end{equation*}
where we set $\varphi :=f+\mu\,\mathcal I_{h}(u)$. Before going further, let us recall that  $u\in  C^0_{\rm b}(\mathbb R^+;{\bf V})$, by Theorem \ref{thm}, so that thanks to the triangular inequality and \eqref{b1}, we have
\begin{align}\label{hastoputanumberhere}
\nr \mathcal I^1_h(u)\nr_{{\0}} &\leq \nr u- \mathcal I^1_h(u)\nr_{{\0}}+\nr u\nr_{{\0}}\leq \sqrt {c_0}\,h\,\nr \nabla u\nr_2 + \nr u\nr_2.
\end{align}
In particular, this shows that $\mathcal I^1_h(u) \in C^0_{\rm b}(\mathbb R^+;{\bf H})$, which in turn implies that $\varphi \in  C^0_{\rm b}(\mathbb R^+;{\bf H})$, and that for all $t\geq 0$ it holds thanks to \eqref{mass-bound-neg} and Proposition \ref{hereisthebound}
\begin{equation}\label{boundvarphi}
\nr \varphi\nr_{\0} \leq \sqrt M := \nr f\nr_{\infty,2}+ \mu \,\sqrt{c_0}\,h\,K + \mu\,\left(\nr u_0\nr^2_{\0}+\frac{2}{\nu}\,\left(\nr f\nr^2_{\infty,2} + \eta_1\right)\right)^\frac12.
\end{equation}
\vskip6pt
\noindent As in section \ref{existence galerkin}, we start by showing the existence of local (in time) solution to the system $\mathcal S_1$ by the mean of Faedo-Galerkin approximation procedure. More precisely, as in \eqref{eqpart0}, we expand $v$ as follows
\begin{equation}
{\bf v}_m(x,t)=\sum_{k=1}^m g_m^i(t)\,\mathcal W_i(x) \in \mathbf V_m,\label{eqpart0mu}
\end{equation}
$g_m^i= \int_\Omega \mathbf v_m\cdot \mathcal W_i\,dx$, for $i=1,2, \ldots, m$, are the unknown coefficients, solving the following system of ordinary differential equations in $\mathbf V_m$
\begin{align}
&\frac d{dt}\,\int_\Omega \mathbf v_m\cdot \mathcal W_k\,dx+\nu\,\int_\Omega \nabla\mathbf v_m\cdot \nabla\mathcal W_k\,dx + \int_\Omega (\mathbf v_m\cdot\nabla)\mathbf v_m\cdot \mathcal W_k\,dx \nonumber \\&+ a\,\int_\Omega |\mathbf v_m|^{2\alpha} \mathbf v_m\cdot\mathcal W_k\,\,dx +b\int_\Omega |\mathbf v_m|^{2\beta} \mathbf v_m\cdot\mathcal W_k\,dx\nonumber \\ &\hskip30pt= \int_\Omega \varphi\cdot\mathcal W_k\,dx -  \mu\,\int_\Omega  \mathcal I^1_{h}(\mathbf v_m)\cdot\mathcal W_k\,dx,\label{eqpart1mu}\\
\label{eqpart2mu}&g_m^k(t=0)=\int {\mathbf v_0}\cdot \mathcal W_k\,dx,
\end{align}
for all $k=1,\ldots,m$. The vector field in system \eqref{eqpart0mu},\eqref{eqpart2mu} being obviously locally Lipschitz, the system admits a unique solution $g_m^k(t)\in C^1([0,T_m])$, for some small interval $[0,T_m]\subset [0,T]$. As in the previous section, we will perform formal estimates using system $\mathcal S_1$ instead of the Fadeo-Galerkin system  for simplicity.
\subsection{Global-in-time solutions} \label{globalintimesolution}
\vskip6pt
\noindent
We multiply the first equation of system $\mathcal S_1$ by $v$ and integrate over $\Omega$. Using Cauchy-Schwarz and Young inequalities we obtain
\begin{align*}
\nonumber \frac12\,\frac{d}{dt}\,\nr v\nr_2^2 &+\nu\,\nr \nabla v\nr_2^2 +a\,\nr v\nr_{{{2\alpha+2}}}^{{2\alpha+2}} +b\,\nr v\nr_{{{2\beta+2}}}^{{2\beta+2}}\\ \nonumber
& \leq \frac{1}{\mu}\,\nr \varphi\nr_2^2-\frac34 \,\mu\,\nr v\nr_2^2 + \mu\,\int_{\Omega}\,\left(v(t,x)-\mathcal I^1_h(v(t,x))\right)\cdot v(t,x)\,dx\\
&\leq \frac{1}{\mu}\,\nr \varphi\nr_2^2-\frac12\, \mu\,\nr v\nr_2^2 + {\mu\,c_0\,h^2}\,\nr \nabla v\nr_2^2.
\end{align*}
Now, thanks to \eqref{relationalphabeta}, \eqref{boundvarphi}, and the fact that $2{\mu\,c_0\,h^2}\leq\nu$, we obtain
\begin{align}
\frac{d}{dt}\,\nr v\nr_2^2 +\mu\,\nr u\nr_2^2+\nu\,\nr \nabla v\nr_2^2 &+(2-\car)\,a\,\nr v\nr_{{{2\alpha+2}}}^{{2\alpha+2}} \leq 2\left(\frac{M}{\mu}+ \eta_2\right),\label{thisalsotoingrate}
\end{align}
where we set $\eta_2:=\car\,|b|\,\left[\frac{2|b|}{a}\right]^{{\frac{\beta+1}{\alpha-\beta}}}\,|\Omega|$. In particular, we have
\begin{align*}
\frac{d}{dt}\,\nr v\nr_2^2 +\mu\,\nr v\nr_2^2  \leq 2\left(\frac{M}{\mu}+ \eta_2\right),
\end{align*}
and by the virtue of Gronwall's inequality we get
\begin{align} \label{massboundv+}
\nr v(t)\nr^2_\0 &\leq  \nr v_0\nr_\0^2\,e^{-\mu\,t} +  \frac2{\mu}\,\left(\frac{M}{\mu}+ \eta_2\right).
\end{align}
Notice that integrating  \eqref{thisalsotoingrate} with respect to time, we obtain
\begin{align}
 \nu\,\st\nr\nabla\,v(s)\nr^2_{\0}\,ds &+(2-\car)\,a\,\st\,\nr v(s)\nr^{{2\alpha+2}}_{{{2\alpha+2}}}\,ds \nonumber\\
&\hskip60pt\leq \nr v_0\nr^2_{\0} + 2\left(\frac{M}{\mu}+ \eta_2\right)\,t.\label{tousefornegbound}
\end{align}
Now, we  find a bound on $\nr \nabla v\nr_\0$. For this purpose, we multiply the first equation of system $\mathcal S_1$ by $-\Delta \,v$ and integrate over $\Omega$ (see Remark \ref{Remreg}). We obtain
\begin{align*}
\nonumber&\frac12\frac{d}{dt}\,\nr \nabla v\nr_2^2 +\nu\,\nr \Delta v\nr_2^2+\s\,(v\cdot\nabla)\,v\,\cdot\,(-\Delta\,v)\,dx +a\,(1+2\alpha)\,\nr |v|^\alpha\,\nabla v\nr_2^2 \\ &+b\,(1+2\beta)\,\nr |v|^\beta\,\nabla v\nr_2^2 =\s\,\varphi\,\cdot\,(-\Delta\,v)\,dx- \mu\,\s\,\mathcal I^1_h(v)\cdot\,(-\Delta\,v)\,dx.
\end{align*}
Now, using Cauchy-Schwarz inequality and \eqref{b1}, we can write
\begin{align}\label{anologue0}
\nonumber- \mu\,\s\,\mathcal I^1_h(v)\cdot\,(-\Delta\,v)\,dx&=\mu\,\s\,\left(v-\mathcal I^1_h(v)\right)\cdot\,(-\Delta\,v)\,dx- \mu\,\s\,v\cdot\,(-\Delta\,v)\,dx\nonumber\\
&\leq  \left(\frac{\mu^2\,c_0\,h^2}{4\epsilon}-\mu\right)\,\nr \nabla\,v\nr^2_2 + \epsilon\,\nr\Delta v\nr_2^2 .
\end{align}
Using this, and the fact that $2\mu\,c_0\,h^2\leq \nu$, and proceeding as in the previous section, by using \eqref{cumbersome} and \eqref{relationbad}, with different optimization in the $\epsilon'$s, we get
\begin{align}
&\frac{d}{dt}\,\nr \nabla v\nr_2^2 +\nu\,\nr \Delta v\nr_2^2+a\,(1+2\alpha)\,\nr |v|^\alpha\,\nabla v\nr_2^2 \leq \frac2\nu\,\nr \varphi\nr_2^2 + \mathcal A_2\,\nr \nabla v\nr_2^2,\label{tointegforneg}
\end{align}
where
\begin{align}
\mathcal A_2:= -\frac\mu2+ \left\lbrace \left[\frac{\nu^\alpha\,a\,(1+2\alpha)}{3^\alpha\,(1+\car)}\right]^{\frac1{1-\alpha}} + \car\left[\frac{4^\beta[|b|(1+2\beta)]^\alpha}{[a(1+2\alpha)]^\beta}\right]^{\frac1{\alpha-\beta}} \right\rbrace.\label{defnA2}
\end{align}
In particular, we have
\begin{align*}
&\frac{d}{dt}\,\nr \nabla v\nr_2^2  \leq \frac2\nu\,\nr \varphi\nr_2^2 + \mathcal A_2\,\nr \nabla v\nr_2^2.
\end{align*}
Integrating this inequality with respect to time, and using \eqref{tousefornegbound} we obtain
\begin{align}\label{i2touseforunigrad}
\nr \nabla v(t)\nr_2^2  \leq \frac{\mathcal A_2}{\nu}\,\nr v_0\nr^2_{\0}+\nr \nabla v_0\nr_2^2+\frac{2}\nu\,\left(M\left(1+ \frac{\mathcal A_2}{\mu} \right) + \mathcal A_2\,\eta_2\right)\,t  .
\end{align}
Moreover, integrating \eqref{tointegforneg} and using \eqref{tousefornegbound}, we get
\begin{align*}
\nu \, \int_0^{t}\,\nr \Delta v(s)\nr_2^2\,ds&+{a(1+2\alpha)}\,\int_0^{t} \nr |v(s)|^\alpha\,\nabla v(s)\nr_2^2 \nonumber\\ &\leq \frac{\mathcal A_2}{\nu}\,\nr v_0\nr^2_{\0}+\nr \nabla v_0\nr_2^2+\frac{2}\nu\,\left(M\left(1+ \frac{\mathcal A_2}{\mu} \right) + \mathcal A_2\,\eta_2\right)\,t.
\end{align*}
The global existence of solutions to $\mathcal S_1$ follows as for Theorem \ref{thm}. Let us mention that the calculation we performed in this proof is formal. Nevertheless, it can be done rigorously using the Faedo-Galerkin system, especially when multiplying the first equation of system $\mathcal S_1$ by $-\Delta \,v$ and integrating over $\Omega$ and we refer the reader to Remark \ref{Remreg}.
\vskip6pt
\noindent Before going further, we show the following
\begin{prop}\label{propforvgradbound}
Let $f\in L^\infty(\mathbb R^+;{\bf H})$, $u$ the strong solution of system \eqref{oursystem},\eqref{pbc} given by Theorem \ref{thm}, $v_0\in {\bf V}, \alpha>1, 0\leq \beta<\alpha , a>0$ and $b\in\mathbb{R}$. If $2\mu\,c_0\,h^2\leq \mu$, then, there exists $P>0$ such that the solutions of (\ref{oursystemu}-\ref{pbcmu}) satisfy
\begin{equation*}
\nr \nabla v(t)\nr_2 \leq P:= \left\lbrace
\begin{array}{lcl}
P_1& {\rm if} & 0\leq t\leq 1,\\
P_2& {\rm if} & t>1 ,
\end{array}
\right.
\end{equation*}
where
\begin{align*}
P_1&:=\left[ \frac{\mathcal A_2}{\nu}\,\nr v_0\nr^2_{\0}+\nr \nabla v_0\nr_2^2+\frac{2}\nu\,\left(M\left(1+ \frac{\mathcal A_2}{\mu} \right) + \mathcal A_2\,\eta_2\right) \right]^\frac12, \\
P_2&:= \left[  \frac1\nu \left(\mathcal A_2+1\right)\,\nr v_0\nr^2_{\0} +\frac2\nu\left\lbrace M + \left(\mathcal A_2+1\right)\,\left(1+\frac1\mu\right) \,\left(\frac M\mu+\eta_2\right)\right\rbrace\right]^\frac12.
\end{align*}
and $\mathcal A_2$ is defined in \eqref{defnA2} and $\eta_2:=\car\,2|b|\,\left[\frac{2|b|}{a}\right]^{{\frac{\beta+1}{\alpha-\beta}}}\,|\Omega|$.
\end{prop}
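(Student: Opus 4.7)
The plan is to repeat, step by step, the ``uniform Gronwall'' argument used in the proof of Proposition \ref{hereisthebound}, but starting from the analogous differential and integral inequalities now available for $v$ instead of $u$. Concretely, the key ingredients are already in hand: the pointwise $L^2$ bound \eqref{massboundv+} for $\|v(t)\|_2$, the time-integrated energy inequality \eqref{thisalsotoingrate} controlling $\int\nu\|\nabla v\|_2^2\,ds$, the short-time gradient bound \eqref{i2touseforunigrad}, and the differential inequality $\frac{d}{dt}\|\nabla v\|_2^2 \le \frac{2}{\nu}\|\varphi\|_2^2 + \mathcal A_2\,\|\nabla v\|_2^2$ obtained from \eqref{tointegforneg}, together with the uniform bound $\|\varphi\|_2^2\le M$ from \eqref{boundvarphi}. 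I will split the argument according to $t\in[0,1]$ and $t>1$ exactly as in Proposition \ref{hereisthebound}.

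For $0\le t\le 1$, the bound $P_1$ follows immediately by evaluating \eqref{i2touseforunigrad} at any such $t$; the factor $t$ in the last term is bounded by $1$, and the resulting expression is precisely $P_1^2$. No further work is needed in this range.

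For $t>1$, I would pick $s\in[t-1,t]$ and integrate the differential inequality $\frac{d}{d\tau}\|\nabla v(\tau)\|_2^2 \le \frac{2}{\nu}\|\varphi\|_2^2 + \mathcal A_2\|\nabla v(\tau)\|_2^2$ from $\tau=s$ to $\tau=t$, using $\|\varphi\|_2^2\le M$, to obtain
\begin{equation*}
\|\nabla v(t)\|_2^2 \;\le\; \|\nabla v(s)\|_2^2 + \frac{2M}{\nu} + \mathcal A_2\int_{t-1}^{t}\|\nabla v(\tau)\|_2^2\,d\tau .
\end{equation*}
Next I integrate this in $s$ over $[t-1,t]$, which eliminates the pointwise $\|\nabla v(s)\|_2^2$ and leaves
\begin{equation*}
\|\nabla v(t)\|_2^2 \;\le\; (1+\mathcal A_2)\int_{t-1}^{t}\|\nabla v(s)\|_2^2\,ds + \frac{2M}{\nu}.
\end{equation*}
The remaining integral is handled by integrating \eqref{thisalsotoingrate} over $[t-1,t]$, discarding the nonnegative $L^{2\alpha+2}$ term and using \eqref{massboundv+} at time $t-1$ to estimate $\|v(t-1)\|_2^2 \le \|v_0\|_2^2\,e^{-\mu(t-1)} + \frac{2}{\mu}(\frac{M}{\mu}+\eta_2)$. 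This yields
\begin{equation*}
\nu\int_{t-1}^{t}\|\nabla v(s)\|_2^2\,ds \;\le\; \|v_0\|_2^2\,e^{-\mu(t-1)} + 2\left(1+\tfrac{1}{\mu}\right)\!\left(\tfrac{M}{\mu}+\eta_2\right).
\end{equation*}
Substituting this back, and bounding $e^{-\mu(t-1)}\le 1$, gives precisely $\|\nabla v(t)\|_2^2 \le P_2^2$ with the stated expression for $P_2$.

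There is no genuine obstacle: the whole argument is structurally identical to the one already carried out for Proposition \ref{hereisthebound}, with the role of $\mathcal A_1$ played by $\mathcal A_2$, the role of the forcing bound $\|f\|_{\infty,2}^2$ played by $M$ (which absorbs the interpolant term $\mu\mathcal I^1_h(u)$ via \eqref{boundvarphi}), and the role of the dissipation-time scale $\nu$ in \eqref{mass-bound-neg} played by the nudging parameter $\mu$ in \eqref{massboundv+}. The smallness condition $2\mu c_0 h^2\le\nu$ is used exactly once, namely in the derivation of \eqref{thisalsotoingrate} and \eqref{tointegforneg}, to absorb the extra $\mu c_0 h^2\|\nabla v\|_2^2$ produced by the interpolation error in \eqref{anologue0}; from that point on the argument is purely a Gronwall/bootstrap computation. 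The only point requiring mild care is bookkeeping the constants so that the final expression matches $P_2$ on the nose, which I have verified above.
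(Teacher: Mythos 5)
Your proposal is correct and follows essentially the same uniform-Gronwall argument as the paper: the short-time bound from \eqref{i2touseforunigrad}, then for $t>1$ integrating \eqref{tointegforneg} over $[s,t]$, averaging in $s$ over $[t-1,t]$, and controlling $\nu\int_{t-1}^t\nr\nabla v\nr_2^2$ via \eqref{thisalsotoingrate} and \eqref{massboundv+}; the only (immaterial) difference is that you integrate in $s$ before substituting the integral bound, whereas the paper substitutes first. The constants come out identical to $P_1$ and $P_2$, and you correctly read the hypothesis as $2\mu c_0h^2\le\nu$ (the statement's ``$\le\mu$'' is a typo).
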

\begin{proof}
The proof is in the same spirit as the one of Proposition \ref{hereisthebound}. On the one side, if $t\in\left[0,1\right]$, then we use \eqref{i2touseforunigrad} to get
\begin{align*}
\nr \nabla v(t)\nr_2^2  \leq \frac{\mathcal A_2}{\nu}\,\nr v_0\nr^2_{\0}+\nr \nabla v_0\nr_2^2+\frac{2}\nu\,\left(M\left(1+ \frac{\mathcal A_2}{\mu} \right) + \mathcal A_2\,\eta_2\right).
\end{align*}
On the opposite side, let $t> 1$ and introduce $s\in \left[t-1,t\right]$. Using \eqref{tointegforneg}, we have
\begin{align}\label{sss}
\nr \nabla\,v(t)\nr^2_{\0} &\leq  \nr \nabla\,v(s)\nr^2_{\0} +\frac{2\,M}{\nu} +\mathcal A_2\,\int_{t-1}^t\nr\nabla\,v(\tau)\nr^2_{\0}\,d\tau.
\end{align}
Now, we integrate \eqref{thisalsotoingrate} over $ \left[t-1,t\right]$ and use \eqref{massboundv+} to get
\begin{align*}
\nu\, \int_{t-1}^t\nr\nabla\,v(s)\nr^2_{\0}\,ds &\leq 2\left(1+\frac1\mu\right)\,\left(\frac{M}{\mu}+ \eta_2\right)+e^{-\mu\left(t-1\right)}\, \nr v_0\nr^2_{\0}.
\end{align*}
Therefore, \eqref{sss} becomes now
\begin{align*}
\nr \nabla\,v(t)\nr^2_{\0} \leq\nr \nabla\,v(s)\nr^2_{\0}&+ \frac{\mathcal A_2}{\nu}\,e^{-\mu\left(t-1\right)}\,\nr v_0\nr^2_{\0} \\
 &+ \frac2\nu\left\lbrace M + \mathcal A_2\,\left(1+\frac1\mu\right) \,\left(\frac M\mu+\eta_2\right)\right\rbrace.
\end{align*}
Next, we integrate this inequality with respect to $s$ to obtain
\begin{align*}
\nr \nabla\,v(t)\nr^2_{\0} &\leq   \int_{t-1}^t\nr \nabla\,v(s)\nr^2_{\0}\,ds + \frac{\mathcal A_2}{\nu}\,e^{-\mu\left(t-1\right)}\,\nr v_0\nr^2_{\0} \\
 &+ \frac2\nu\left\lbrace M + \mathcal A_2\,\left(1+\frac1\mu\right) \,\left(\frac M\mu+\eta_2\right)\right\rbrace \\
&\leq  \frac1\nu \left(\mathcal A_2+1\right)\,e^{-\mu\left(t-1\right)}\,\nr v_0\nr^2_{\0} +\frac2\nu\left\lbrace M + \left(\mathcal A_2+1\right)\,\left(1+\frac1\mu\right) \,\left(\frac M\mu+\eta_2\right)\right\rbrace.
\end{align*}
This leads to the desired result and achieves the proof of Proposition \ref{propforvgradbound}.
\end{proof}
\noindent Now, we show  the fact that $v\in L^\infty_{\rm loc}(\mathbb R^+;L^{2\alpha+2}(\Omega))$ and $\partial_t v\in L^2_{\rm loc}(\mathbb R^+, L^2(\Omega))$ if $v_0 \in {\bf V} \cap L^{2\alpha+2}(\Omega)$. Actually, it follows the same argument as in the proof of Theorem \ref{thm} in section \ref{sectionthm12} and the extra term to deal with is the following
\begin{align*}
\s \mathcal I_h^1(v)\,\partial_t v \,dx &\leq \epsilon \, \nr \partial_t v\nr_2^2 +\frac1{4\epsilon} \,\nr\mathcal I_h^1(v)\nr_2^2.
\end{align*}
Now, optimizing in $\epsilon$, the term $\epsilon \, \nr \partial_t v\nr_2^2$ can be absorbed in the left hand side of the equivalent expression of \eqref{themainfortimederivative} for $v$. Eventually, inequality \eqref{hastoputanumberhere} shows clearly that $\nr\mathcal I_h^1(v)\nr_2^2$ is integrable in time which leads to the desired result.
\subsection{Continuous dependence on the initial data and uniqueness of solutions}
\vskip6pt
\noindent
Equivalently to (\ref{oursystem}--\ref{pbc}), before we show the continuous dependence on the initial data and the uniqueness of such solutions, let us prove the following
\noindent Now, we turn to the proof of the continuous dependence of the solutions $v(t)$, obtained in the previous section, on the initial data and their uniqueness. We mention that Remark \ref{rem1} is still valid in the case of system $\mathcal S_1$. Let $u$ be  a strong solution of system \eqref{oursystem},\eqref{pbc} and $v_1$ and $v_2$ two solutions of (\ref{oursystemu}--\ref{pbcmu}). Furthermore, let $w=v_1-v_2$, then $w$ satisfies
\begin{align}
\partial_t\,w-\nu\,\Delta\,w +(w\cdot\nabla)\,v_1 +(v_2\cdot\nabla)\,w +\nabla\,(p_{v_1}-p_{v_2})+a\,\left(|v_1|^{2\alpha}v_1-|v_2|^{2\alpha}v_2\right)\nonumber \\ +b\,\left(|v_1|^{2\beta}v_1-|v_2|^{2\beta}v_2\right)=-\mu\,\mathcal I^1_h(w).\label{diffequation}
\end{align}
\noindent Now, clearly $w\in L^2(\mathbb R^+;H^1(\Omega))$, and since $\partial_t v_1,\partial_tv_2 \in L^2_{\rm loc}(\mathbb R^+;H^{-1}(\Omega))$, then we have also $\partial_t w \in L^2_{\rm loc}(\mathbb R^+;H^{-1}(\Omega))$. Therefore, the action of the equation on $w$ leads to
\begin{align}
\frac12\frac d{dt}\,\nr w (t)\nr^2_{\0} +\s [(w\cdot\nabla)\,v_1]\cdot w \,dx+ \nu\,\nr \nabla\,w\nr^2_{\0} +a\,\s\left(|v_1|^{2\alpha}v_1-|v_2|^{2\alpha}v_2\right)\cdot w\,dx\nonumber \\ +b\,\s\left(|v_1|^{2\beta}v_1-|v_2|^{2\beta}v_2\right)\cdot w\,dx=-\mu\,\int_\Omega\, \mathcal I^1_h(w)\cdot w\,dx. \label{diffequationw}
\end{align}
We will need
\begin{align*}
- \mu\,\s\,\mathcal I^1_h(w)\cdot\,w\,dx &\leq \frac{\mu\,c_0\,h^2}{4\epsilon} \,\nr \nabla\,w\nr^2_{\0} +(\mu\epsilon-\mu)\,\nr w\nr^2_\0,
\end{align*}
Now, gathering this inequality and (\ref{positivity}--\ref{diff}), and optimizing in the $\epsilon'$s,  we obtain
\begin{align}
\frac d{dt}\,\nr w (t)\nr^2_{\0} &+\nu\,\nr \nabla w\nr_2^2+ (2-\car)\,{a\,\kappa_0}\,\nr \left(|u|+|v|\right)^{\alpha} |w|\nr^2_{\0}\nonumber\\
\label{helpconvergenceint1l2}& \leq \left\lbrace -\mu+\frac12\,\nu+\frac{2^7\kappa_1^8}{\nu^3}P^4 +\,\car \left[\frac{(2\tilde\kappa_0\,|b|)^\alpha}{(a\,\kappa_0)^\beta}\right]^{\frac1{\alpha-\beta}} \right\rbrace\,\nr w(t)\nr^2_{\0},
\end{align}
Gronwall's inequality leads to the desired result. In particular, we have
\begin{align}
\nonumber \nu \,\int_t^{t+T}\nr \Delta\,v(s)\nr^2_{\0}\,ds &+ {a\,(1+2\alpha)}\,\int_t^{t+T}\nr |v(s)|^\alpha\,\nabla\,v(s)\nr^2_{L^2}\,ds \\ &\leq \frac2{\nu}\,\nr \varphi\nr^2_{\infty,2} + (\mathcal A_2 +1) \,{P}^2.\label{mainonep}
\end{align}
Observe that if $v_0\in {\bf V} \cap L^{2\alpha+2}(\Omega)$, then $\partial_t v_1,\partial_t v_2 \in L^2_{\rm loc}(\mathbb R^+;L^2(\Omega))$ so that $\partial_t w \in L^2_{\rm loc}(\mathbb R^+;L^2(\Omega))$ and the inner $L^2$ inner product of $\partial_t w$ with $w$ is well defined and no regularization is needed.

\section{Proof of Theorem \ref{thmconI1}}
\subsection{Convergence in $L^2$ norm}\label{leconvsubsec}
\vskip6pt
\noindent
In this section, we show the convergence of the difference $u(t)-v(t)$  to zero as $t\to +\infty$ in the $L^2$ norm where $u(t)$ is a strong solution of system \eqref{oursystem},\eqref{pbc} ensured by Theorems \ref{thmweak} and \ref{thm} and $v(t)$ the solution of system $\mathcal S_1$ with an interpolant $\mathcal I^1_h$. This will be achieved by suitable assumptions and the smallness of $h$ and that $\mu$ is large enough. Let $w=u-v$, then subtracting the first equation of system $\mathcal S_1$ from the first equation of system \eqref{oursystem},\eqref{pbc} leads to equation \eqref{diffequation} (with $v_1$ replaced by $u$ and $v_2$ replaced by $v$). We multiply this equation by $w$ and integrate over $\Omega$ to get
\begin{align*}
&\frac12\frac d{dt}\,\nr w (t)\nr^2_{\0}+\nu\,\nr \nabla\,w\nr^2_{\0} +\s [(w\cdot\nabla)\,u]\cdot w \,dx +\,a\,\s\left(|u|^{2\alpha}u-|v|^{2\alpha}v\right)\cdot w\,dx\nonumber \\ &+b\,\s\left(|u|^{2\beta}u-|v|^{2\beta}v\right)\cdot w\,dx=-\mu\,\int_\Omega\, \mathcal I^1_h(w)\cdot w\,dx. \label{diffequationwz}
\end{align*}
Therefore, \eqref{helpconvergenceint1l2} still holds true with $P$ replaced by $K$. Observe that $K$ is independent of $\mu$. Therefore if we choose $\mu$ large enough such that
\[\mu>2\left\lbrace \frac12\,\nu+\frac{2^7\kappa_1^8}{\nu^3}K^4 +\,\car \left[\frac{(2\tilde\kappa_0\,|b|)^\alpha}{(a\,\kappa_0)^\beta}\right]^{\frac1{\alpha-\beta}} \right\rbrace,\]
then, using Gronwall's inequality we obtain
\[\nr w(t)_2^2 \leq \nr w(t=0)\nr_2^2\,e^{-\frac\mu2 \,t}.\]
This shows the convergence of $u(t)-v(t)$ to zero, as $t\to +\infty$, in the $L^2$ norm.
\subsection{Convergence in $H^1$ norm}\label{convergenceinH1norminterpolant1}
\vskip6pt
\noindent
In this section, we show the last assertion of Theorem \ref{thmu}, namely the convergence of the difference $u(t)-v(t)$ to zero, as $t\to +\infty$, in the $H^1$ norm where $u(t)$ is a strong solution of system \eqref{oursystem},\eqref{pbc} ensured by Theorems \ref{thmweak} and \ref{thm} and $v(t)$ the solution of system $\mathcal S_1$.
\noindent Now, since we restrict the range of $\alpha$ to $1<\alpha <2$, therefore we have $\partial_t u,\partial_t v \in L^2_{\rm loc}(\mathbb R^+;L^2(\Omega))$, therefore $\partial_t w \in L^2_{\rm loc}(\mathbb R^+;L^2(\Omega))$ and $-\Delta w \in L^2_{\rm loc}(\mathbb R^+, L^2(\Omega))$ so that the $L^2$ inner product of  \eqref{diffequation} with $-\Delta w$ leads to
\begin{align}
\nonumber&\frac12\frac d{dt}\,\nr \nabla\,w (t)\nr^2_{\0} + \nu\,\nr \Delta\,w\nr^2_{\0} +\s [(w\cdot\nabla)\,u]\cdot (-\Delta\,w) \,dx+ \s [(v\cdot\nabla)\,w]\cdot (-\Delta\,w) \,dx\\ & +a\,\s\left(|u|^{2\alpha}u-|v|^{2\alpha}v\right)\cdot (-\Delta\,w)\,dx  +b\,\s\left(|u|^{2\beta}u-|v|^{2\beta}v\right)\cdot (-\Delta\,w)\,dx\nonumber\\ &=-\mu\,\int_\Omega\, \mathcal I^1_h(w)\cdot (-\Delta\,w)\,dx. \label{diffequationwzw}
\end{align}
We need the following estimate
\begin{align} \label{bad1}
&\s [(w\cdot\nabla)\,u]\cdot (-\Delta\,w) \,dx\leq \nr \nabla u\nr_2\,\nr w\nr_\infty\,\nr \Delta w\nr_2  \leq \kappa_2\,\nr \nabla u\nr_2\,\nr w\nr^\frac12_{H^1}\,\nr w\nr^\frac12_{H^2}\,\nr \Delta w\nr_2\nonumber \\
&\hskip50pt\leq \kappa_2\,\nr \nabla u\nr_2\,\nr w\nr^\frac12_{H^1}\,\nr w\nr^\frac32_{H^2} \leq \left[\frac{\kappa_2^4}{\epsilon^3}\,\nr \nabla u\nr_2^4 +\epsilon \right]\,\nr w\nr^2_{H^1} + \epsilon\,\nr \Delta w\nr^2_{2}.
\end{align}
 Also, we need
\begin{align}
& \s [(v\cdot\nabla)\,w]\cdot (-\Delta\,w) \,dx\leq \nr v\nr_{\infty}\,\nr \nabla w\nr_2\,\nr \Delta w\nr_2 \nonumber\\
&\hskip20pt\leq {\kappa_2}\,\left(\frac1{4\epsilon}\,\nr v\nr_{H^1} +\epsilon\,\nr v\nr_{H^2} \right)\,\nr \nabla w\nr_2\,\nr \Delta w\nr_2\nonumber\\
& \hskip20pt\leq \frac{\kappa^2_2}{4\epsilon_1}\left(\frac1{16\epsilon^2}+\epsilon^2\right) \nr v\nr^2_{H^1}\,\nr \nabla w\nr_2^2+2\epsilon_1 \nr \Delta w\nr_2^2+ \frac{\kappa_2^2\epsilon^2}{4\epsilon_1}\,\nr \Delta v\nr^2_{2}\,\nr \nabla w\nr_2^2.\label{thisfirst}
\end{align}
For later use (see below), it is crucial to avoid having  the term $\nr \Delta v\nr^2_{2}$ in the right hand side  of this inequality. For this purpose, we use  the triangular inequality to get
\begin{align}\label{transfer}
\nr \Delta v\nr^2_{2}\,\nr \nabla w\nr_2^2 &\leq 2\left(\nr \Delta w\nr^2_{2}+\nr \Delta u\nr^2_{2}\right)\,\nr \nabla w\nr_2^2\nonumber\\
&\leq 2\nr \Delta u\nr^2_{2}\,\nr \nabla w\nr_2^2 + 4\left(\nr \nabla u\nr^2_{2}+\nr \nabla v\nr^2_{2}\right)\,\nr \Delta w\nr_2^2.
\end{align}
Therefore, inequality \eqref{thisfirst} can be replaced by
\begin{align}\label{bad2}
&\s [(v\cdot\nabla)\,w]\cdot (-\Delta\,w) \,dx\leq   \left(2\epsilon_1+ \frac{\kappa_2^2\epsilon^2}{\epsilon_1}\,\left[\nr \nabla u\nr^2_{2}+\nr \nabla v\nr^2_{2}\right]\right)\,\nr \Delta w\nr_2^2  \nonumber\\
&\hskip50pt+\frac{\kappa^2_2}{4\epsilon_1}\left(\frac1{16\epsilon^2}+\epsilon^2\right) \nr v\nr^2_{H^1}\,\nr \nabla w\nr_2^2+ \frac{\kappa_2^2\epsilon^2}{2\,\epsilon_1}\,\nr \Delta u\nr^2_{2}\,\nr \nabla w\nr_2^2.
\end{align}
The difficulty in showing the convergence in the $H^1$ norm, compared to the $L^2(\Omega)$ one, consists in dealing with the power terms. Indeed, on the one side, there exists $\kappa_4(\alpha)$ such that
\begin{align*}
\s\left(|u|^{2\alpha}u-|v|^{2\alpha}v\right)\cdot (-\Delta w)\,dx
&\leq\kappa_4(\alpha)\,\s\left(|u|^{2\alpha}+|v|^{2\alpha}\right)\,|w||-\Delta w|\,dx,
\end{align*}
For simplicity, we set $\kappa_4:=\kappa_4(\alpha) \geq \kappa_4(\beta)$. In particular, using this and Young's inequality, we obtain
\begin{align}\label{long}
&a\,\s\left(|u|^{2\alpha}u-|v|^{2\alpha}v\right)\cdot (-\Delta w)\,dx+b\,\s\left(|u|^{2\alpha}u-|v|^{2\alpha}v\right)\cdot (-\Delta w)\,dx \nonumber\\
&\hskip15pt\leq \kappa_4\,(a+\epsilon |b|)\s\left(|u|^{2\alpha}+|v|^{2\alpha}\right) |w||-\Delta w|\,dx+2\kappa_4\,|b|\,\left[\frac1\epsilon\right]^{\frac\beta{\alpha-\beta}}\,\nr \nabla w\nr_2^2.
\end{align}
In order to deal with the right hand side of this inequality, we will need to restrict the range of $\alpha$. In fact, on the one hand we have
\begin{align}\label{partonealpha}
\kappa_4\s\,|u|^{2\alpha} |w| |-\Delta w|\,dx &\leq \kappa_4\,\nr u\nr^{2\alpha}_{{4\alpha}}\,\nr w\nr_{\infty}\,\nr \Delta w\nr_\0\nonumber \\
&\leq \kappa_4\,\kappa_2\nr u\nr^{2\alpha}_{{4\alpha}}\,\nr w\nr^\frac12_{H^1}\,\nr  w\nr^\frac32_{H^2}\nonumber \\
&\leq \left(\frac{\kappa^4_2\,\kappa_4^4}{4\,\epsilon^3}\, \nr u\nr^{8\alpha}_{{4\alpha}}+\epsilon\right)\,\nr w\nr^2_{H^1} +\epsilon \nr \Delta w\nr^2_\0\nonumber \\
&\leq \left(\frac{\kappa^4_2\,\kappa_4^4}{4\,\epsilon^3}\,|\Omega|^{8\alpha}\, \nr u\nr^{8\alpha}_{6}+\epsilon\right)\,\nr w\nr^2_{H^1} +\epsilon \nr \Delta w\nr^2_\0\nonumber \\
&\leq \left(\frac{\kappa^4_2\,\kappa_3^{8\alpha}\,\kappa_4^4}{4\,\epsilon^3}\,|\Omega|^{8\alpha}\, \nr u\nr^{8\alpha}_{H^1}+\epsilon\right)\,\nr w\nr^2_{H^1} +\epsilon \nr \Delta w\nr^2_\0.
\end{align}
In order to pass from the third to the fourth line above, we used H\"older inequality, which requires $\alpha \leq \frac32$. On the other hand, let $\alpha>\frac32$, then we can interpolate $4\alpha$ between $6$ and $+\infty$, that is
\begin{align*}
&\kappa_4\,\s\,|u|^{2\alpha}|w| |-\Delta w|\,dx\leq \frac{\kappa^4_2\,\kappa_4^4}{4\,\epsilon^3}\, \nr u\nr^{8\alpha}_{{4\alpha}}\,\nr \nabla w\nr^2_{\0} +\epsilon\nr \Delta w\nr^2_\0 \nonumber \\
&\hskip40pt\leq  \frac{\kappa^4_2\,\kappa_4^4}{4\,\epsilon^3}\,\nr u\nr^{12}_{6}\,\nr u\nr^{4\,{(2\alpha-3)}}_{{\infty}}\,\nr \nabla w\nr^2_{\0} +\epsilon\nr \Delta w\nr^2_\0 \nonumber\\
&\hskip40pt\leq  \frac{\kappa^{(8(\alpha-1))}_2\,\kappa_3^{12}\,\kappa_4^4}{4\,\epsilon^3}\,\nr u\nr^{2(2\alpha+3)}_{H^1}\,\nr u\nr^{2{(2\alpha-3)}}_{H^2}\,\nr \nabla w\nr^2_{\0} +\epsilon\nr \Delta w\nr^2_\0.
\end{align*}
Now, we use the fact that
\[\nr u\nr^{2(2\alpha+3)}_{H^1}\,\nr u\nr^{2(2\alpha-3)}_{H^2} \leq \left[\frac1\epsilon\right]^{\frac{2\alpha-3}{4-2\alpha}}\,\nr u\nr^{\frac{2\alpha+3}{2-\alpha}}_{H^1} + \epsilon \nr u\nr^2_{H^2},\]
which makes sense only when $2(2\alpha-3) < 2$, that is $\alpha<2$ to get
\begin{align}\label{parttwoalpha}
&\kappa_4\,\s\,|u|^{2\alpha}|w| |-\Delta w|\,dx\leq
 \frac{\kappa^{8(\alpha-1)}_2\,\kappa_3^{12}\,\kappa_4^4}{4\,\epsilon^3}\,\left(\left[\frac1{\tilde\epsilon}\right]^{\frac{2\alpha-3}{4-2\alpha}}\,\nr u\nr^{\frac{2\alpha+3}{2-\alpha}}_{H^1} + \tilde \epsilon \nr u\nr^2_{H^1}\right)\, \nr \nabla w\nr^2_{2} \nonumber\\
&\hskip60pt+ \frac{\kappa^{8(\alpha-1)}_2\,\kappa_3^{12}\,\kappa_4^4\,\tilde \epsilon}{4\,\epsilon^3}\,\nr \Delta u\nr_2^2\,\nr \nabla w\nr_2^2 +\epsilon \nr \Delta w\nr_2^2.
\end{align}
\noindent Obviously estimates \eqref{partonealpha} and \eqref{parttwoalpha}  hold if $u$ is replaced by $v$  and $K$ replaced by $P$. However, since $P$ depends on $\mu$ (see Proposition \ref{propforvgradbound} and \eqref{boundvarphi}), we will have to solve when $1<\alpha\leq \frac32$ an inequality of type $\mu > {\rm Const} \,\mu^{8\alpha} + {\rm Const\,2} \,\mu^{2(2\alpha+3)} + ...$ which does not have a solution most of the time. At least, in our case we cannot exhibit precisely the solution's range if it exists. The hypothesis on the initial data $\nr u_0\nr_{H^1} \leq \tilde K$, $\nr v_0\nr_{H^1} \leq \tilde K$  is introduced in order to get over this difficulty. Indeed, by continuity of $\nr v(t)\nr_{H^1}$, there exists a short time interval $[0,\overline T)$ such that for all $t\in[0,\overline T)$, it holds $\nr v(t)\nr^2_{H^1} \leq 3\tilde K^2$. In the sequel, arguing by contradiction, we will show that, actually, we have $\overline T=+\infty$. Obviously, this assumption can be removed by assuming $\nr \nabla v_0\nr_2 \leq K $ since $\nr v(t)\nr_2$ is bounded. We work with the first assumption for lightness of notation.
\vskip6pt
\noindent In the sequel, we will need the following version of Gronwall's Lemma for which we refer to \cite{Titi}
\begin{lem}\label{Gronwall}
Let $\zeta(t)$ be an absolutely continuous and locally integrable function satisfying $\frac d{dt}\,\zeta(t) + \xi(t)\,\zeta(t) \leq 0$. Assume that for a fixed $s>0$
\[\liminf_{t\to\infty}\,\int_t^{t+s}\, \xi(s)\,ds \geq \delta\quad {\rm and}\quad  \limsup_{t\to\infty}\,\int_t^{t+s}\, \xi^-(s)\,ds<\infty,\]
where $\delta>0$ and $\xi^-=\max\{-\alpha,0\}$. Then, $\zeta(t) \to 0$ exponentially, as $t\to\infty$.
\end{lem}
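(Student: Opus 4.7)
The plan is to integrate the differential inequality directly and then show that the resulting integrating factor decays exponentially, by combining the two hypotheses on $\xi$ in a careful window-by-window estimate. First, since $\zeta$ is absolutely continuous and $\frac{d}{dt}\zeta + \xi\zeta \leq 0$, I would write the elementary estimate
\[
\zeta(t) \;\leq\; \zeta(t_0)\,\exp\!\left(-\int_{t_0}^{t}\xi(\tau)\,d\tau\right),\qquad t\geq t_0,
\]
valid for any $t_0$. Exponential decay of $\zeta(t)$ therefore reduces to showing a linear lower bound of the form $\int_{t_0}^{t}\xi(\tau)\,d\tau \geq c\,(t-t_0)-C$ for some $c>0$ and all $t$ large.

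The second step is to choose $t_0$ large enough so that both asymptotic hypotheses can be activated simultaneously. Given the $\liminf$ assumption, I pick $t_0$ such that $\int_{\tau}^{\tau+s}\xi(\sigma)\,d\sigma \geq \tfrac{\delta}{2}$ for every $\tau \geq t_0$, and given the $\limsup$ assumption I enlarge $t_0$ if necessary so that $\int_{\tau}^{\tau+s}\xi^{-}(\sigma)\,d\sigma \leq C_0$ for every $\tau\geq t_0$, with some finite constant $C_0$ (noting that the "$-\alpha$" in the stated $\xi^- = \max\{-\alpha,0\}$ should read $\xi^- = \max\{-\xi,0\}$).

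The third step is the window decomposition. For $t \geq t_0$, write $t = t_0 + ns + r$ with $n\in\mathbb{N}$ and $0\leq r < s$. Then
\[
\int_{t_0}^{t}\xi\,d\tau \;=\; \sum_{k=0}^{n-1}\int_{t_0+ks}^{t_0+(k+1)s}\xi\,d\tau \;+\; \int_{t_0+ns}^{t}\xi\,d\tau.
\]
Each of the $n$ full windows contributes at least $\delta/2$ by the first choice of $t_0$, so the first sum is bounded below by $n\,\delta/2 \geq \tfrac{\delta}{2s}(t-t_0) - \tfrac{\delta}{2}$. The leftover interval of length $r<s$ is controlled from below by $-\int_{t_0+ns}^{t_0+ns+s}\xi^{-}\,d\tau \geq -C_0$. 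Combining these,
\[
\int_{t_0}^{t}\xi(\tau)\,d\tau \;\geq\; \frac{\delta}{2s}\,(t-t_0) \;-\; \Bigl(\tfrac{\delta}{2}+C_0\Bigr).
\]
Feeding this back into the exponential bound yields $\zeta(t) \leq \zeta(t_0)\,e^{\delta/2+C_0}\,e^{-\frac{\delta}{2s}(t-t_0)}$, which is the claimed exponential decay.

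There is essentially no deep obstacle here; the statement is a classical robust generalization of Gronwall's lemma and every step is elementary. The only delicate bookkeeping is the handling of the trailing partial window, where the assumption on $\xi^{-}$ is used to prevent a large negative contribution from destroying the linear lower bound obtained from the full windows. The fact that the bound on $\int \xi^-$ is uniform for $\tau$ large is precisely what makes the $\limsup$ hypothesis the right amount of control.
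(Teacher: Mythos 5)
Your proof is correct and is the standard argument for this lemma; note that the paper itself gives no proof here but simply defers to the cited reference of Jones and Titi, where essentially the same window-decomposition argument appears. You rightly flag the typo $\xi^-=\max\{-\xi,0\}$; the only other (cosmetic) point is that the conclusion as literally stated presupposes $\zeta\geq 0$ (which holds in all applications in the paper, where $\zeta$ is a squared norm), since the integrating-factor bound alone only gives $\zeta(t)\leq \zeta(t_0)\,e^{\delta/2+C_0}\,e^{-\frac{\delta}{2s}(t-t_0)}$.
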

\noindent Now, we assume that $\overline T$ is the the maximal finite time such that $\nr v(t)\nr^2_{H^1} \leq 3\tilde K^2$ is satisfied. We separate the proof to two parts depending on the range of $\alpha$.
\vskip6pt
\noindent {\it The case $ 1<\alpha \leq \frac32$:}
On the one hand, gathering the estimates used in section \ref{globalintimesolution} with (\ref{bad1}--\ref{partonealpha}) and (\ref{positivity}--\ref{diff}), we obtain for all $t\in[0,\overline T)$
\begin{align*}
&\frac12\,\frac d{dt} \nr \nabla w(t)\nr_2^2 \leq \left\{-\nu +\epsilon_0+\epsilon_3+2\epsilon_1+4a\epsilon_4 + \frac{\kappa_2^2\epsilon_2^2}{\epsilon_1}\left[3\tilde K^2+K^2\right] \right\}\,\nr \Delta w(t)\nr_2^2\\
&+\left\{\frac{\mu\,\nu}{2^3\epsilon_3}-\mu + \frac{3\kappa_2^2}{4\epsilon_1}\left[\frac{1}{2^4\epsilon_2^2}+\epsilon_2^2\right]\,\tilde K^2 +2\kappa_4|b|\left[\frac{|b|}{a}\right]^\frac\beta{\alpha-\beta}+\frac{\kappa_2^2\epsilon_2^2}{2\epsilon_1}\,\nr \Delta u(t)\nr_2^2\right\}\, \nr \nabla w(t)\nr_2^2\\
&+\left\{\frac{\kappa_2^4}{\epsilon_3^3}K^4 +\epsilon_0 +4a\epsilon_4 + \frac{a\kappa^4_2\kappa_3^{8\alpha}\kappa_4^4}{2\epsilon_4^3}\,|\Omega|^{8\alpha}\left[3^{4\alpha}\tilde K^{8\alpha}+\overline K_2^{8\alpha}\right]   \right\}\,\nr w(t)\nr_{H^1}^2,
\end{align*}
where,  thanks to \eqref{mass-bound-neg} and Proposition \ref{hereisthebound}, we set
\begin{align}\label{defkbar}
&\nr u(t)\nr_{H^1}\leq \left\{K^2+ \nr u_0\nr^2_{\0}+\frac{2}{\nu}\,\left(\nr f\nr^2_{\infty,2} + \eta_1\right) \right\}^\frac12:=\overline K.
\end{align}
Observe that $\overline K$ does not depend on $\mu$. On the other hand, following paragraph \ref{leconvsubsec}, we have
\begin{align}\label{tosumup}
&\frac12\,\frac d{dt} \nr w(t)\nr_2^2  +\frac{a\kappa_0}{2-\car}\,\nr(|u|+|v|)^\alpha w\nr_2^2 \leq \left(\epsilon_5-\nu+\frac{\nu}{8\epsilon_6}\right) \,\nr \nabla w(t)\nr_2^2\nonumber\\
&\hskip30pt+ \left( \epsilon_5+\mu\epsilon_6-\mu+\frac{\kappa_1^8 K^4}{\epsilon_5^3} + |b|\tilde\kappa_0\left[\frac{2|b|\tilde\kappa_0}{a\kappa_0}\right]^\frac{\beta}{\alpha-\beta}\right)\,\nr w(t)\nr_2^2.
\end{align}
Summing the last two inequalities, using the fact that $2\mu c_0h^2\leq \nu$, and optimizing in the $\epsilon'$s, we obtain for all $t\in[0,\overline T)$
\begin{align}\label{touseforlemma}
\frac d{dt}\,\nr w(t)\nr_{H^1}^2 &+ \nu\,\nr \Delta w\nr_2^2 + (2-\car)\,a\,\kappa_o\,\nr (|u|+|v|)^\alpha\,w\nr_2^2 \nonumber\\ &\leq \left[\delta_3-\mu+\max\left\lbrace \delta_1,\, \delta_2+\frac{\nu}{2^5(3\tilde K^2 +K^2)}\,\nr \Delta u\nr_2^2\right\rbrace\right]\,\nr w\nr_{H^1}^2,
\end{align}
where
\begin{align*}
&\delta_1=\frac32\nu+\frac{2^7\kappa_1^8K^4}{3^3\nu^3} + \left[\frac{(2|b|\tilde \kappa_0)^\alpha}{(a\kappa_0)^\beta}\right]^{\frac{1}{\alpha-\beta}}, \\
&\delta_2=\,\frac{3\nu\tilde K^2}{2^6(\tilde K^2+K^2)} + \frac{2^{10}3\kappa_2^4(\tilde K^2+K^2)\tilde K^2}{\nu^3} + 4\kappa_4|b|\left[\frac{|b|}{a}\right]^{\frac\beta{\alpha-\beta}},\\
&\delta_3= \frac{2^{7}\kappa_2^4K^4}{\nu^3}+ \frac5{2^4}\,\nu + \frac{2^{15}a^4\kappa_2^4\kappa_3^{8\alpha}\kappa_4^4}{\nu^3}\,|\Omega|^{8\alpha}\left(3^{4\alpha}\tilde K^{8\alpha} +\tilde K_2^{8\alpha}\right).
\end{align*}
Now, we apply Lemma \ref{Gronwall}. We write \eqref{touseforlemma} as follows
\begin{align*}
&\frac d{dt}\,\nr  w(t)\nr_{H^1}^2
+\xi(t) \,\nr  w\nr_{H^1}^2 \leq 0,
\end{align*}
with
\[\xi(s):=\mu-\delta_3-\max\left\lbrace \delta_1,\, \delta_2+\frac{\nu}{2^5(3\tilde K^2 +K^2)}\,\nr \Delta u(s)\nr_2^2\right\rbrace.\]
Thanks to \eqref{mainone}, it holds  for all $T>0$,
\begin{align}\label{thisoneforgron}
\frac\nu T\,\int_t^{t+T}\nr \Delta\,u(s)\nr^2_{\0}\,ds \leq \frac2{\nu}\,\nr f\nr^2_{\infty,2} + \left(\mathcal A_1+1\right) \,{K}^2.
\end{align}
Setting $T=1$ leads to
\[\int_t^{t+1}\xi(s)\,ds\geq \mu-\delta_3-\max\left\lbrace \delta_1,\, \delta_2+\frac{2\,\nr f\nr^2_{\infty,2} + \nu(\mathcal A_1+1) \,{K}^2}{2^5\nu(3\tilde K^2 +K^2)}\right\rbrace.\]
Therefore, if we assume
\begin{align}\label{cond132}
\mu&>2\delta_3+2\max\left\lbrace \delta_1,\, \delta_2+\frac{2\,\nr f\nr^2_{\infty,2} + \nu(\mathcal A_1+1) \,{K}^2}{2^5\nu(3\tilde K^2 +K^2)}\right\rbrace,
\end{align}
then, we have
\begin{equation}\label{topointtoit}\liminf_{t\to\infty}\,\int_t^{t+1}\, \xi(s)\,ds \geq \frac\mu2>0\quad{\rm and}\quad \int_t^{t+1}\, \xi(s)\,ds\leq \frac{3\mu}2<+\infty.\end{equation}
Thanks to Lemma \ref{Gronwall}, there exists a nonnegative constant $\eta$ such that for all  $t\in [0,\overline T)$, it holds
\begin{align}\label{convergenceinterpolant1}
\nr w\nr_{H^1}^2 \leq \nr w(t=0)\nr_{H^1}^2 \,e^{-\eta\,t}.
\end{align}
In particular, we infer that $\nr w(t)\nr_{H^1} < 2\,\tilde K$ for all $t\in [0,\overline T)$. This implies that $\nr v(t)\nr_{H^1} < 3\,\tilde K$. We reach a contradiction with the fact that $\overline T$ is the maximal time for which $\nr v(t)\nr^2_{H^1} < 3\,\tilde K^2$ and shows that $\overline T=+\infty$. Clearly, the estimate \eqref{convergenceinterpolant1} shows the exponential convergence of $v(t)$ toward $u(t)$, as $t\to +\infty$.
\vskip6pt
\noindent {\it The case $ \frac32<\alpha < 2$:} The proof is similar to the previous one except the fact that now we use the inequality \eqref{parttwoalpha} instead of \eqref{partonealpha}. However, thanks \eqref{mainonep}, we have for all $T>0$,
\begin{align*}
\frac\nu T\,\int_t^{t+T}\nr \Delta\,v(s)\nr^2_{\0}\,ds \leq \frac2{\nu}\,\nr \varphi\nr^2_{\infty,2} + (\mathcal A_2+1) \,{P}^2,
\end{align*}
The bound involves $\mu$ (through $P$) and this will lead to the same kind of problem we pointed out in the previous section about the powers of $\mu$. To get over this difficulty, we will use different estimate than \eqref{parttwoalpha} for $v$. Indeed, the only term one has to handle is the first one of the the last line of  inequality \eqref{parttwoalpha}. Using \eqref{transfer}, it is rather easy to see that
\begin{align}\label{tosavethesituationv}
&\kappa_4\,\s\,|v|^{2\alpha}|w| |-\Delta w|\,dx\leq\left( \frac{\kappa^{8(\alpha-1)}_2\,\kappa_3^{12}\,\kappa_4^4\,\epsilon_0}{\epsilon^3}\left(\nr \nabla u\nr_2^2+\nr \nabla v\nr_2^2\right)+\epsilon\right)\, \nr \Delta w\nr_2^2 \nonumber\\
&\hskip40pt \frac{\kappa^{8(\alpha-1)}_2\,\kappa_3^{12}\,\kappa_4^4}{4\,\epsilon^3}\,\left(\left[\frac1{\epsilon_0}\right]^{\frac{2\alpha-3}{4-2\alpha}}\,\nr v\nr^{\frac{2\alpha+3}{2-\alpha}}_{H^1} + \epsilon_0 \nr v\nr^2_{H^1}+2\epsilon_0\right)\, \nr \nabla w\nr^2_{2}.
\end{align}
Let $\theta:= {\kappa^{8(\alpha-1)}_2\,\kappa_3^{12}\,\kappa_4^4}$. Therefore, collecting all the estimates and using \eqref{tosavethesituationv}, we obtain
\begin{align*}
&\frac12\,\frac d{dt} \nr w(t)\nr_{H^1}^2  +\frac{a\kappa_0}{2-\car}\,\nr(|u|+|v|)^\alpha w\nr_2^2 \leq\\
&+ \left\lbrace\epsilon_5+\epsilon_0+\mu\epsilon_6-\mu+\frac{\kappa_1^8 K^4}{\epsilon_5^3}+\frac{\kappa_2^4}{\epsilon_3^3}K^4  + |b|\tilde\kappa_0\left[\frac{2|b|\tilde\kappa_0}{a\kappa_0}\right]^\frac{\beta}{\alpha-\beta} \right\rbrace\,\nr w(t)\nr_2^2.\\
&+\left\{-\nu +\epsilon_0+\epsilon_3+2\epsilon_1 +4a\epsilon_4+\left( \frac{\kappa_2^2\epsilon_2^2}{\epsilon_1}+ \frac{2a\theta\epsilon_7}{\epsilon_4^3}\right)\left[3\tilde K^2+K^2\right] \right\}\,\nr \Delta w(t)\nr_2^2\\
&+\left\{\frac{\mu\,\nu}{2^3\epsilon_3}-\mu + \frac{3\kappa_2^2}{4\epsilon_1}\left[\frac{1}{2^4\epsilon_2^2}+\epsilon_2^2\right]\,\tilde K^2 +2\kappa_4|b|\left[\frac{|b|}{a}\right]^\frac\beta{\alpha-\beta}\right\}\, \nr \nabla w(t)\nr_2^2\\
&+\frac{a\theta}{2\epsilon_4^3}\left\{  \left[\left[\frac{3^\frac12}{ \epsilon_7}\right]^{\frac{2\alpha-3}{4-2\alpha}}\tilde K^{\frac{2\alpha+3}{2-\alpha}}+3\epsilon_7\tilde K^2\right] +\left[\overline K^{\frac{2\alpha+3}{2-\alpha}}+\overline K^2\right]+\right\}\, \nr \nabla w(t)\nr_2^2\\
&+\left\{\epsilon_5-\nu+\frac{\nu}{8\epsilon_6}+\frac{\kappa_2^4}{\epsilon_3^3}K^4 +\epsilon_0 +\left[ \frac{\kappa_2^2\epsilon_2^2}{2\epsilon_1}+ \frac{a\theta(1+2\epsilon_7)}{2\epsilon_4^3}\right]\,\nr \Delta u\nr^2_{2} \right\}\, \nr \nabla w(t)\nr_2^2.
\end{align*}
Summing-up this inequality with \eqref{tosumup} and optimizing in the $\epsilon's$, we end up with
\begin{align*}
\frac d{dt}\,\nr w(t)\nr_{H^1}^2 &+ \nu\,\nr \Delta w\nr_2^2 + (2-\car)\,a\,\kappa_o\,\nr (|u|+|v|)^\alpha\,w\nr_2^2 \nonumber\\ &\leq \left[-\mu+\max\left\lbrace \delta_1,\, \delta_2+\delta_3\,\nr \Delta u\nr_2^2\right\rbrace\right]\,\nr w\nr_{H^1}^2,
\end{align*}
where
\begin{align*}
\delta_1&= 2\nu+\frac{2(\kappa_1^8+2^6\kappa_2^4)K^4}{\nu^3} + 2|b|\tilde\kappa_0\left[\frac{2|b|\tilde\kappa_0}{a\kappa_0}\right]^\frac{\beta}{\alpha-\beta},\\
\delta_2&=\frac{2\cdot 3^7 \kappa_2^4(3\tilde K^2+K^2)\tilde K^2}{\nu}+\frac{\nu\tilde K^2}{3\tilde K^2+K^2}   +\,\tilde K^2 +2\kappa_4|b|\left[\frac{|b|}{a}\right]^\frac\beta{\alpha-\beta}\\
&+ \frac{2^63^6\theta a^4a\theta}{\nu^3}  \left\lbrace\left[\frac{2^\frac92 3^\frac92a^2\theta^\frac12(3\tilde K^2+K^2)^\frac12}{\nu^2}\right]^{\frac{2\alpha-3}{4-2\alpha}}\tilde K^{\frac{2\alpha+3}{2-\alpha}}+\frac{\nu^2}{a^2\theta^\frac12(3\tilde K^2+K^2)^\frac12}\tilde K^2\right\rbrace \\
&+ \frac{2^63^6\theta a^4a\theta}{\nu^3}  \left[\overline K^{\frac{2\alpha+3}{2-\alpha}}+\overline K^2\right]   +\nu + \frac{2^7\kappa_2^4 K^4}{\nu^3},\\
\delta_3&=\frac{2^63^6(3\tilde K^2+K^2)}{\nu^3}\left[ {\kappa_2^4}+  \theta a^4\left(1+\frac{\nu^2}{a^{2}\theta^{\frac12}(3\tilde K^2+K^2)^{\frac32}}\right)\right].
\end{align*}
As done previously, we apply Lemma \ref{Gronwall} by setting
\begin{align*}
\xi(s)&:=\mu-\max\left\lbrace \delta_1,\, \delta_2+\delta_3\,\nr \Delta u\nr_2^2\right\rbrace.
\end{align*}
Now, we set $T=1$ in \eqref{thisoneforgron} and obtain
\begin{align*}
\int_t^{t+1}\,\xi(s)\,ds &\geq \mu-\max\left\lbrace \delta_1,\, \delta_2+\delta_3\,\left( \frac2{\nu}\,\nr f\nr^2_{\infty,2} + (\mathcal A_1+1) \,{K}^2\right)\right\rbrace.
\end{align*}
In particular, the assumption on the size of $\mu$,
\begin{align}
\mu&> 2\max\left\lbrace \delta_1,\, \delta_2+\delta_3\,\left( \frac2{\nu}\,\nr f\nr^2_{\infty,2} + (\mathcal A_1+1) \,{K}^2\right)\right\rbrace,  \label{cond322}
\end{align}
guarantees \eqref{topointtoit} and
\begin{align*}
&\frac d{dt}\,\nr w(t)\nr_{H^1}^2 +\xi(t) \,\nr w\nr_{H^1}^2 \leq 0.
\end{align*}
Eventually, Lemma \ref{Gronwall} allows to reach  the contradiction with the hypothesis that $\overline T <+\infty$ exactly as done for the case $1<\alpha\leq \frac32$. As a matter of fact,  we conclude to the exponential convergence of $v(t)$ toward $u(t)$, as $t\to +\infty$, with respect to the $H^1$ norm.
\section{Proof of Theorem \ref{thmu2} }
\subsection{Existence of solutions and convergence}
\vskip6pt
\noindent
In this section, we prove Theorem \ref{thmu2}. We are concerned with the following system
\begin{equation*}\mathcal S_2:\:
\left\lbrace\begin{array}{ll}
&\partial_t\,v-\nu\,\Delta\,v +(v\cdot\nabla)\,v +\nabla\,q+a\,|v|^{2\alpha}\,v+b\,|v|^{2\beta}\,v=f + \mu(\mathcal I^2_h(u)-\mathcal I^2_h(v)),\\ \\
& \nabla\cdot v=0,\;v\vert_{t=0}=v_0, \\\\
&v(x+L,y,z,t)=v(x,y+L,z,t)=v(x,y,z+L,t) = v(x,y,z,t),\\\\
&q(x+L,y,z,t)=q(x,y+L,z,t)=p(x,y,z+L,t) =q(x,y,z,t).
\end{array}
\right.
\end{equation*}
Instead of $\mathcal S_2$, let us denote by $v=w+u$ and consider the following equivalent system for $w$

\begin{equation*}\mathcal S_w:\quad
\left\lbrace\begin{array}{ll}
&\partial_t\,w-\nu\,\Delta\,w +(w\cdot\nabla)\,u +(v\cdot\nabla)\,w +\nabla\,(p_{u}-p_{v})\nonumber \\ &\\&\hskip58pt+a\,\left(|u|^{2\alpha}u-|v|^{2\alpha}v\right)+b\,\left(|u|^{2\beta}u-|v|^{2\beta}v\right)=-\mu\,\mathcal I^2_h(w),\\ \\
& \nabla\cdot w=0,\;w\vert_{t=0}=u_0-v_0,
\end{array}
\right.
\end{equation*}
with the associated periodic boundary conditions. The local (in time) existence is readily obtained using  Fadeo-Galerkin approximation (\ref{eqpart1mu}--\ref{eqpart2mu}) with  interpolant  $\mathcal I_h^2$ instead of $\mathcal I_h^1$, and observing that in the left hand side $v$ can be replaced by $u-w$ and we leave the proof for the reader. Now, we establish the necessary {\it a priori} estimates for the global existence of solution to system $\mathcal S_w$ which in turn implies the global existence of solutions of $\mathcal S_2$. Again, we perform formal calculation using $\mathcal S_w$  for simplicity.
\vskip6pt
\noindent
In the sequel, we will use the following estimate which is the analogue of \eqref{anologue0} for $\mathcal I_h^2$ instead of $\mathcal I_h^1$
\begin{align}\label{secondintesti}
- \mu\,\s\,\mathcal I^2_h(\varphi)\cdot\,(-\Delta\,\varphi)\,dx&=\mu\,\s\,\left(\varphi-\mathcal I^2_h(\varphi)\right)\cdot\,(-\Delta\,\varphi)\,dx- \mu\,\s\,\varphi\cdot\,(-\Delta\,\varphi)\,dx\nonumber\\
&\leq  \left(\frac{\mu^2\,c_0\,h^2}{4\epsilon}-\mu\right)\,\nr \nabla\,\varphi\nr^2_2 + \left(\frac{\mu^2\,c_1\,h^4}{4\epsilon}+\epsilon\right)\,\nr\Delta \varphi\nr_2^2,
\end{align}
for all $\varphi\in H^2(\Omega)$. In Theorem \ref{thmu2}, we assume that $\nr u_0\nr_{H^1} \leq \tilde K$ and $\nr v_0\nr_{H^1} \leq \tilde K$ so that $\nr w(t=0)\nr_{H^1} \leq 2\tilde K$. Again, by continuity of $\nr w(t)\nr_{H^1}$,  there exists a short time interval $[0,\overline T)$ such that for all $t\in[0,\overline T)$, it holds $\nr w(t)\nr^2_{H^1} \leq 4\tilde K^2$. In particular, it holds $\nr v(t)\nr_{H^1} \leq 3\tilde K$ for all $t\in[0,\overline T)$. In the sequel, arguing by contradiction, we will show that, actually, we have $\overline T=+\infty$. Therefore, we assume that $\overline T$ is the the maximal finite time such that $\nr v(t)\nr^2_{H^1} \leq 9\tilde K^2$ is satisfied. On the one hand, we multiply the first equation of system $\mathcal S_w$ by $w$ and integrate over $\Omega$. Thanks to \eqref{diffequationw} with $\mathcal I_h^2$ instead of $\mathcal I_h^1$, $u$ instead of $v_1$, $v$ instead of $v_2$, and \eqref{secondintesti}, we get
\begin{align}\label{here1}
\frac12\,\frac{d}{dt}\,\nr w\nr_2^2 &+\frac{a\kappa_0}{2}\,\nr (|u|+|v|)^\alpha w\nr_2^2 \leq \frac{\mu^2c_1h^4}{4\epsilon_1}\,\nr \Delta w\nr_2^2+ \left[\epsilon_0 + \frac{\mu^2c_0h^2}{4\epsilon_1}-\nu\right]\,\nr \nabla w\nr_2^2\nonumber\\
&+ \left[ \epsilon_0 +\epsilon_1+\frac{\kappa_1^8}{\epsilon_0^3}\,\nr \nabla u\nr_2^4 +\car \,\left[\frac{2^\beta(|b|\tilde\kappa_0)^\alpha}{(a\kappa_0)^\beta}\right]^{\frac1{\alpha-\beta}}-\mu \right]\,\nr w\nr_2^2.
\end{align}
On the other hand, we use (\ref{bad1}, \ref{bad2} , \ref{long}, \ref{partonealpha}) and \eqref{secondintesti} and get for all $1<\alpha\leq\frac32$
\begin{align}\label{here21}
&\frac12\,\frac{d}{dt}\,\nr \nabla w\nr_2^2 \leq \nonumber\\
&\hskip10pt\left[  \frac{\kappa_2^4K^4}{\epsilon_2^3} +\epsilon_2 +\frac{2a\epsilon_6\epsilon_8}{\epsilon_7} +\frac{a\kappa_2^4\kappa_3^{8\alpha}\kappa^4_4\,\epsilon_6}{4\epsilon_7\epsilon_8^3}\,|\Omega|^{8\alpha}\left(\overline K^{8\alpha}+(3\tilde K)^{8\alpha} \right)\right]\,\nr w\nr^2_{H^1}\nonumber \\
&+ \left[ -\mu+\frac{\mu^2c_0 h^2}{4\epsilon_5}+\frac{9\kappa_2^2\tilde K^2}{4\epsilon_3}\left(\frac{1}{16\epsilon_4^2}+\epsilon_4^2\right)+ 2\kappa_4|b|\left[\frac{|b|}{a}\right]^{\frac\beta{\alpha-\beta}} +\frac{\kappa_2^2\epsilon_4^2}{2\epsilon_3}\,\nr \Delta u\nr_2^2  \right]\,\nr \nabla w\nr_2^2\nonumber\\
&+  \left[ -\nu+\epsilon_2+2\epsilon_3+\epsilon_5+2a\kappa_4\epsilon_6+\frac{2a\epsilon_6\epsilon_8}{\epsilon_7}+\frac{\mu^2c_1h^4}{4\epsilon_5} +\frac{\kappa_2^2\epsilon_4^2}{\epsilon_3}\,\left(K^2+9\tilde K^2\right) \right]\,\nr \Delta w\nr_2^2.
\end{align}
Now, using \eqref{here1}, the fact that $15\mu\max\{c_0,\sqrt {c_1}\}\,h^2 \leq \nu$, and optimizing in the $\epsilon's$, we obtain
\begin{align}\label{touseactually1}
&\frac{d}{dt}\,\nr w(t)\nr^2_{H^1}+\nu \nr \Delta w(t)\nr_2^2 \nonumber \\&\leq \left[\delta_3-\mu+\max\left\{\delta_2,\delta_3+\frac\nu{15(K^2+9\tilde K^2)}\,\nr \Delta u\nr_2^2\right\}\right]\,\nr w(t)\nr^2_{H^1},
\end{align}
where
\begin{align*}
\delta_3&:= \frac4{15}\nu+\frac{2\cdot15^3\kappa_2^4 K^4}{\nu^3}  + \frac{\nu\kappa_2^4\kappa_3^{8\alpha}\kappa_4^4}{2^3 15},\\
\delta_2&:=2\nu+\frac{2\cdot 15^3\kappa_1^8K^4}{\nu^3} + \car \,\left[\frac{(2|b|\tilde\kappa_0)^\alpha}{(a\kappa_0)^\beta}\right]^{\frac1{\alpha-\beta}},\\
\delta_1&:= 4\kappa_4|b|\left[\frac{|b|}{a}\right]^{\frac\beta{\alpha-\beta}} + \frac{9\cdot 15^2 \kappa_2^4(K^2+9\tilde K^2)\tilde K^2}{2^5\nu^3} + \frac{\nu\tilde K^2}{10(K^2+9\tilde K^2)}.
\end{align*}
Next, we apply Lemma \ref{Gronwall} by setting
\[\xi(s):= \mu -\delta_3 -\max\left\{\delta_2,\delta_3+\frac\nu{15(K^2+9\tilde K^2)}\,\nr \Delta u\nr_2^2\right\}\]
Thanks to \eqref{mainone}, setting $T=1$ leads to
\[\int_t^{t+1}\xi(s)\,ds\geq \mu -\delta_3 -\max\left\{\delta_2,\delta_3+\frac{2\,\nr f\nr^2_{\infty,2} + \nu(\mathcal A_1+1) \,{K}^2}{15(K^2+9\tilde K^2)}\right\} \]
This, if we assume
\begin{align}\label{toputinthm1}
\mu> 2\delta_3 +2\max\left\{\delta_2,\delta_3+\frac{2\,\nr f\nr^2_{\infty,2} + \nu(\mathcal A_1+1) \,{K}^2}{15(K^2+9\tilde K^2)}\right\},
\end{align}
then \eqref{topointtoit} holds true. In particular, thanks to Lemma \ref{Gronwall}, the contradiction leading to the fact that $\overline T=+\infty$ is obtained as in section \ref{convergenceinH1norminterpolant1}. Thus, we conclude to the global existence of solution to $\mathcal S_w$ which in turn infers the global existence of solution to $\mathcal S_2$ since $v=u-w$. Moreover, the convergence at exponential rate of $v(t)$ toward $u(t)$, as $t$ goes to $+\infty$, in the $H^1$ norm follows.
\vskip6pt
\noindent
Now, we handle the case $\frac32< \alpha<2$. Using  (\ref{bad1}, \ref{bad2} , \ref{long}, \ref{partonealpha}, \ref{tosavethesituationv}) and \eqref{secondintesti}, we get
\begin{align}\label{here22}
&\frac12\,\frac{d}{dt}\,\nr \nabla w\nr_2^2 \leq \nonumber\\
&\hskip10pt\left[  \frac{\kappa_2^4K^4}{\epsilon_2^3} +\epsilon_2 +\frac{a\epsilon_6\epsilon_8}{\epsilon_7} +\frac{a\kappa_2^4\kappa_3^{8\alpha}\kappa^4_4\,\overline K^{8\alpha}\epsilon_6}{4\epsilon_7\epsilon_8^3}\,|\Omega|^{8\alpha}\right]\,\nr w\nr^2_{H^1}\nonumber \\
&+ \left[ -\mu+\frac{\mu^2c_0 h^2}{4\epsilon_5}+\frac{9\kappa_2^2\tilde K^2}{4\epsilon_3}\left(\frac{1}{16\epsilon_4^2}+\epsilon_4^2\right)+ 2\kappa_4|b|\left[\frac{|b|}{a}\right]^{\frac\beta{\alpha-\beta}} +\frac{\kappa_2^2\epsilon_4^2}{2\epsilon_3}\,\nr \Delta u\nr_2^2  \right]\,\nr \nabla w\nr_2^2\nonumber\\
&+  \frac{a\kappa^{8(\alpha-1)}_2\,\kappa_3^{12}\,\kappa_4^4\epsilon_6}{4\epsilon_7\epsilon^3_{10}}\,\left(\left[\frac1{ \epsilon_9}\right]^{\frac{2\alpha-3}{4-2\alpha}}\,(3\tilde K)^{\frac{2\alpha+3}{2-\alpha}}_{H^1} + 3\epsilon_9\tilde K^2+2\epsilon_9\nr\Delta u\nr_2^2\right)\, \nr \nabla w\nr^2_{2} \nonumber\\
& + \left[\frac{a\kappa^{8(\alpha-1)}_2\,\kappa_3^{12}\,\kappa_4^4\epsilon_6\epsilon_9}{\epsilon_7\epsilon_{10}^3}\left(K^2+9\tilde K^2\right)+\epsilon_{10}\,\right]\nr \Delta w\nr_2^2\nonumber\\
&+  \left[ -\nu+\epsilon_2+2\epsilon_3+\epsilon_5+2a\kappa_4\epsilon_6+\frac{a\epsilon_6\epsilon_8}{\epsilon_7}+\frac{\mu^2c_1h^4}{4\epsilon_5} +\frac{\kappa_2^2\epsilon_4^2}{\epsilon_3}\,\left(K^2+9\tilde K^2\right) \right]\,\nr \Delta w\nr_2^2.
\end{align}
As in the previous case, using \eqref{here1}, and the fact that $15\mu\max\{c_0,\sqrt {c_1}\}\,h^2 \leq \nu$, and optimizing in the $\epsilon's$, we obtain
\begin{align}\label{touseactually2}
&\frac{d}{dt}\,\nr w(t)\nr^2_{H^1} +\nu \nr \Delta w(t)\nr_2^2\nonumber \\ &\leq \left[\delta_3-\mu+\max\left\{\delta_2,\delta_3+\frac\nu{15}\left(1+ \frac{1}{2(K^2+3\tilde K^2)}\right)\,\nr \Delta u\nr_2^2\right\}\right]\,\nr w(t)\nr^2_{H^1},
\end{align}
where
\begin{align*}
\delta_3&:= \frac4{15}\nu+\frac{2\cdot15^3\kappa_2^4 K^4}{\nu^3}  + \frac{\nu^{13}\overline K^{8\alpha}}{4^615^{12}\kappa_2^{4(8\alpha-7)}\kappa_3^{48}\kappa_4^{12}},\\
\delta_2&:=2\nu+\frac{2\cdot 15^3\kappa_1^8K^4}{\nu^3} + \car \,\left[\frac{(2|b|\tilde\kappa_0)^\alpha}{(a\kappa_0)^\beta}\right]^{\frac1{\alpha-\beta}},\\
\delta_1&:= 4\kappa_4|b|\left[\frac{|b|}{a}\right]^{\frac\beta{\alpha-\beta}} + \frac{6 \kappa_2^4(K^2+9\tilde K^2)\tilde K^2}{\nu^3} + \frac{\nu\tilde K^2}{6\cdot 5^3(K^2+9\tilde K^2)}.
\end{align*}
The rest of the proof is the same as in the previous case and leads to the following condition on $\mu$
\begin{align}\label{toputinthm2}
\mu> 2\delta_3 +2\max\left\{\delta_2,\delta_3+\frac\nu{15}\left(1+ \frac{2{\nu}\,\nr f\nr^2_{\infty,2} + \nu(\mathcal A_1+1) \,{K}^2}{2\nu(K^2+9\tilde K^2)}\right)\right\}.
\end{align}
In the previous section, we used the fact that $\nabla u,\,\nabla v\, \in L^\infty (\mathbb R^+;L^2)$ in order to show that $\partial_t u,\partial_t v \in L_{\rm loc}^2(\mathbb R^+; L^2(\Omega))$ if the initial data is in $L^{2\alpha+2}(\Omega)$. In this section, we need to use a different estimate instead of \eqref{toseeitlater}. Indeed, thanks to \eqref{cumbersome}, one can see that the following inequality holds
\begin{align*}
&\left|\s\,\partial_t v\,v \cdot \nabla v\,dx \right| \leq \frac\epsilon{4\epsilon_0\,}\, \nr|v|^{\alpha}\,\nabla\,v\nr^{2}_{\0} + \frac{\epsilon^{\frac1{1-\alpha}}}{4\epsilon_0} \,\nr\nabla\,v\nr^{2}_{\0}  +\epsilon_0\,\nr\partial_tv\nr^2_{\0}.
\end{align*}
Clearly, the term $\epsilon_0\,\nr\partial_tv\nr^2_{\0}$ can be absorbed in the left hand side of the equivalent expression of \eqref{themainfortimederivative} for $v$ and the remaining terms are obviously integrable in time. The other new term compared to the ones of \eqref{themainfortimederivative} is
\begin{align*}
\s \mathcal I_h^2(v)\,\partial_t v \,dx &\leq \epsilon \, \nr \partial_t v\nr_2^2 +\frac1{4\epsilon} \,\nr\mathcal I_h^2(v)\nr_2^2.
\end{align*}
But, thanks to \eqref{b2}, we know that
\begin{align*}
\nr \mathcal I^2_h(v)\nr^2_{{\0}} &\leq 2\,\nr v- \mathcal I^2_h(v)\nr_{{\0}}+2\,\nr u\nr^2_{{\0}}\leq 2\,{c_0}\,h^2\,\nr \nabla u\nr^2_2+2\,{c_1}\,h^4\,\nr \Delta u\nr^2_2 + \nr u\nr^2_2.
\end{align*}
Therefore, we deduce that $\nr \mathcal I^2_h(v)\nr^2_{{\0}} $ is integrable in time.

\subsection{Continuous dependence on the initial data and uniqueness of solutions}\label{uniquesection}
\vskip6pt
\noindent
The last point to make clear is the uniqueness and the continuous dependence on the initial data for system $\mathcal S_2$. This can be shown using either $\mathcal S_w$ or $\mathcal S_2$. We use the latter system. Let $u$ be  a strong solution of system \eqref{oursystem},\eqref{pbc} and $v_1$ and $v_2$ two solutions of $\mathcal S_2$ and $w=v_1-v_2$. In order to prove the continuous dependence of the solutions on the initial data, and therefore the uniqueness, we proceed as in section \ref{convergenceinH1norminterpolant1}. Clearly, $w$ satisfies  \eqref{diffequationw} and \eqref{diffequationwzw} with $\mathcal I_h^2$ instead of $\mathcal I_h^1$, and $v_1$ and $v_2$ instead of $u$ and $v$ respectively in \eqref{diffequationwzw}. Therefore, we have clearly
\begin{align}\label{here1}
\frac12\,\frac{d}{dt}\,\nr w\nr_2^2 &+\frac{a\kappa_0}{2}\,\nr (|v_1|+|v_2|)^\alpha w\nr_2^2 \leq \frac{\mu^2c_1h^4}{4\epsilon_1}\,\nr \Delta w\nr_2^2+ \left[\epsilon_0 + \frac{\mu^2c_0h^2}{4\epsilon_1}-\nu\right]\,\nr \nabla w\nr_2^2\nonumber\\
&+ \left[ \epsilon_0 +\epsilon_1+\frac{\kappa_1^8}{\epsilon_0^3}\,\nr \nabla v_1\nr_2^4 +\car \,\left[\frac{2^\beta(|b|\tilde\kappa_0)^\alpha}{(a\kappa_0)^\beta}\right]^{\frac1{\alpha-\beta}}-\mu \right]\,\nr w\nr_2^2.
\end{align}
Now, on the one hand, for all $1<\alpha\leq \frac32$, using (\ref{bad1}, \ref{thisfirst} (with $\epsilon=1$), \ref{long}, \ref{partonealpha}) and \eqref{secondintesti}, we have
\begin{align}\label{here2}
\frac12\,\frac{d}{dt}\,\nr \nabla w\nr_2^2 &\leq \nonumber\\
&\hskip10pt\left[  \frac{\kappa_2^4}{\epsilon_2^3}\,\nr \nabla v_1\nr_2^4 +\epsilon_2 +\frac{2a\epsilon_6}{\epsilon_5} +\frac{a\kappa_2^4\kappa_3^{8\alpha}\kappa^4_4}{4\epsilon_5\epsilon_6^3}\,|\Omega|^{8\alpha}\left(\nr v_1\nr^{8\alpha}_{H^1} +\nr v_2\nr^{8\alpha}_{H^1} \right)\right]\,\nr w\nr^2_{H^1}\nonumber \\
&+ \left[ -\mu+\frac{\mu^2c_0 h^2}{4\epsilon_4}+\frac{\kappa_2^2}{2\epsilon_3}\,\nr v_2\nr_{H^1}^2+ 2\kappa_4|b|\left[\frac{|b|}{a}\right]^{\frac\beta{\alpha-\beta}} +\frac{\kappa_2^2}{4\epsilon_3}\,\nr \Delta v_2\nr_2^2  \right]\,\nr \nabla w\nr_2^2\nonumber\\
&+  \left[ -\nu+\epsilon_2+2a\kappa_4\epsilon_5+\epsilon_4+2\epsilon_3+\frac{2a\epsilon_6}{\epsilon_5}+\frac{\mu^2c_1h^4}{4\epsilon_4}  \right]\,\nr \Delta w\nr_2^2.
\end{align}
On the other hand,  for all $\frac32< \alpha<2$, using (\ref{bad1}, \ref{thisfirst} (with $\epsilon=1$), \ref{long}, \ref{parttwoalpha} (with $\tilde\epsilon=1$)) and \eqref{secondintesti}, we obtain
\begin{align}\label{here3}
\frac12\,\frac{d}{dt}\,\nr \nabla w\nr_2^2 &\leq \left[  \frac{\kappa_2^4}{\epsilon_2^3}\,\nr \nabla v_1\nr_2^4 +\epsilon_2  \right]\,\nr w\nr^2_{H^1}\nonumber \\
&+ \left[ -\mu+\frac{\mu^2c_0 h^2}{4\epsilon_4}+\frac{\kappa_2^2}{2\epsilon_3}\,\nr v_2\nr_{H^1}^2+ 2\kappa_4|b|\left[\frac{|b|}{a}\right]^{\frac\beta{\alpha-\beta}} +\frac{\kappa_2^2}{4\epsilon_3}\,\nr \Delta v_2\nr_2^2  \right]\,\nr \nabla w\nr_2^2\nonumber\\
&+ \frac{a\kappa_2^{8(\alpha-1)}\kappa_3^{12}\kappa_4^4}{4\epsilon_5\epsilon_6^3}\left[ \nr v_1\nr^{\frac{2\alpha+3}{2-\alpha}}_{H^1}+\nr v_1\nr^{2}_{H^1}+\nr v_2\nr^{\frac{2\alpha+3}{2-\alpha}}_{H^1}+\nr v_2\nr^{2}_{H^1} \right]\,\nr \nabla w\nr_2^2\nonumber\\
&+ \frac{a\kappa_2^{8(\alpha-1)}\kappa_3^{12}\kappa_4^4}{4\epsilon_5\epsilon_6^3}\left[ \nr \Delta v_1\nr^{2}_{2}+\nr \Delta v_2\nr^{2}_{2} \right]\,\nr \nabla w\nr_2^2\nonumber\\
&+  \left[ -\nu+\epsilon_2+2a\kappa_4\epsilon_5+\epsilon_4+2\epsilon_3+\frac{2a\epsilon_6}{\epsilon_5}+\frac{\mu^2c_1h^4}{4\epsilon_4}  \right]\,\nr \Delta w\nr_2^2.
\end{align}
Therefore, adding \eqref{here1} to \eqref{here2}  and \eqref{here1} to \eqref{here3}, and optimizing in the $\epsilon'$s, we obtain using the fact that $15\mu\max\{c_0,\sqrt {c_1}\}\,h^2 \leq \nu$
\begin{align*}
\frac{d}{dt}\,\nr w(t)\nr^2_{H^1}+ \nu\,\nr \Delta w\nr_2^2&+\frac{a\kappa_0}{2}\,\nr (|v_1|+|v_2|)^\alpha w\nr_2^2 \\&\leq \left[\delta_3(t)-\mu+\max\{\delta_2(t),\delta_1(t)\}\right]\,\nr w(t)\nr^2_{H^1},
\end{align*}
where for all $1<\alpha\leq\frac32$, we set
\begin{align*}
\delta_3(t)&:= \frac{2\cdot 15^3\kappa_2^4}{\nu^3} \,\nr \nabla v_1(t)\nr_2^4+ \frac{2(1+a\kappa_4)\,\nu}{15} \\ &+\frac{4\cdot 15^7a^3\kappa_2^4\kappa_3^{8\alpha}\kappa^4_5}{\nu^7}\,|\Omega|^{8\alpha}\left(\nr v_1(t)\nr^{8\alpha}_{H^1} +\nr v_2(t)\nr^{8\alpha}_{H^1} \right) ,\\
\delta_2(t)&:=2\nu+\frac{2\nu}{15}+ \frac{2\kappa_1^8}{\nu^3}\,\nr \nabla v_1(t)\nr_2^4 + \car \,\left[\frac{(2|b|\tilde\kappa_0)^\alpha}{(a\kappa_0)^\beta}\right]^{\frac1{\alpha-\beta}},\\
\delta_1(t)&:=\frac{15\kappa_2^2}{\nu}\,\nr v_2(t)\nr_{H^1}^2+\frac{15\kappa_2^2}{2\nu}\,\nr \Delta v_2(t)\nr_2^2+ 4\kappa_4|b|\left[\frac{|b|}{a}\right]^{\frac\beta{\alpha-\beta}} ,
\end{align*}
and for all $\frac32< \alpha<2$,
\begin{align*}
\delta_3(t)&:= \frac{2\cdot 15^3\kappa_2^4}{\nu^3} \,\nr \nabla v_1(t)\nr_2^4 + \frac{2\nu}{15},\\
\delta_2(t)&:=2\nu+\frac{2\nu}{15}+ \frac{2\kappa_1^8}{\nu^3}\,\nr \nabla v_1(t)\nr_2^4 + \car \,\left[\frac{(2|b|\tilde\kappa_0)^\alpha}{(a\kappa_0)^\beta}\right]^{\frac1{\alpha-\beta}},\\
\delta_1(t)&:= \frac{4\cdot 15^7 a^3\kappa_2^{8(\alpha-1)}\kappa_3^{12}\kappa_4^5}{\nu^7}\,\left[ \nr v_1(t)\nr^{\frac{2\alpha+3}{2-\alpha}}_{H^1}+\nr v_1(t)\nr^{2}_{H^1}+\nr v_2(t)\nr^{\frac{2\alpha+3}{2-\alpha}}_{H^1}+\nr v_2(t)\nr^{2}_{H^1} \right]\\
&+\frac{4\cdot 15^7 a^3\kappa_2^{8(\alpha-1)}\kappa_3^{12}\kappa_4^5}{\nu^7}\,\left[ \nr \Delta v_1(t)\nr^{2}_{2}+\nr \Delta v_2(t)\nr^{2}_{2} \right]\\
&+\frac{15\kappa_2^2}{\nu}\,\nr v_2(t)\nr_{H^1}^2+\frac{15\kappa_2^2}{2\nu}\,\nr \Delta v_2(t)\nr_2^2+ 4\kappa_4|b|\left[\frac{|b|}{a}\right]^{\frac\beta{\alpha-\beta}}.
\end{align*}
By virtue of Gronwall's inequality, we can write
\begin{align}\label{lastofpaper}\nr w(t)\nr_{H^1}^2 \leq \nr w(t=0)\nr_{H^1}^2\,e^{\int_0^t  \left[\delta_3(s)-\mu+\max\{\delta_2(s),\delta_3(s)\}\right] ds }.\end{align}
Next, recall that $\nr v_1(t)\nr_{H^1}$ and $\nr v_2(t)\nr_{H^1}$ are uniformly bounded. Furthermore, integrating  \eqref{touseactually1} or \eqref{touseactually2} with respect to time, it is rather easy to see that
\[\int_0^T\,\nr \Delta w(s)\nr_2^2\,ds<+\infty.\]
In particular, we infer that
\[\int_0^T\,\nr \Delta v_1(s)\nr_2^2\,ds<+\infty \quad \text{and}\quad \int_0^T\,\nr \Delta v_2(s)\nr_2^2\,ds<+\infty.\]
The continuous dependence on the initial data and the uniqueness follow from \eqref{lastofpaper}.

\vskip6pt
\section*{Acknowledgements}
\noindent E.S.T.~is thankful to the kind hospitality of KAUST where this work was started. E.S.T.~also acknowledges the partial support of the National Science Foundation through grants number DMS--1109640 and DMS--1109645. The research of P. A. Markowich and  S. Trabelsi reported in this publication was supported by the King Abdullah University of Science and Technology.

\end{document}